\numberwithin{equation}{section}
\newtheorem {theorem}{Theorem}[section]
\newtheorem {proposition}[theorem]{Proposition}
\newtheorem {lemma}[theorem]{Lemma}
\newtheorem {corollary}[theorem]{Corollary}
\theoremstyle{definition}
\theoremstyle{theorem}
\newtheorem {remark}[theorem]{Remark}
\def\ba{\begin{array}}
\def\ea{\end{array}}
\def\bea{\begin{eqnarray} \label}
\def\eea{\end{eqnarray}}
\def\be{\begin{equation} \label}
\def\ee{\end{equation}}
\def\bit{\begin{itemize}}
\def\eit{\end{itemize}}
\def\ben{\begin{enumerate}}
\def\een{\end{enumerate}}
\def\lan{\langle}
\def\ran{\rangle}
\def\DD{\mathbb{D}}
\def\EE{\mathbb{E}}
\def\NN{\mathbb{N}}
\def\PP{\mathbb{P}}
\def\RR{\mathbb{R}}
\def\VV{\mathbb{V}}
\def\ZZ{\mathbb{Z}}
\def\bF{\mathbf{F}}
\def\bG{\mathbf{G}}
\def\bX{\mathbf{X}}
\def\bY{\mathbf{Y}}
\def\EH{\EuFrak{H}}
\def\cA{\mathcal{A}}
\def\cF{\mathcal{F}}
\def\cH{\mathcal{H}}
\def\cL{\mathcal{L}}
\def\cN{\mathcal{N}}
\def\cS{\mathcal{S}}
\def\dint{\textup{d}}
\begin{document}

\title{\bfseries Malliavin-Stein method for\\ Variance-Gamma approximation on Wiener space}

\author{Peter Eichelsbacher\footnotemark[1] \, and  Christoph Th\"ale\footnotemark[2]}

\date{}
\renewcommand{\thefootnote}{\fnsymbol{footnote}}
\footnotetext[1]{Ruhr University Bochum, Faculty of Mathematics, NA 3/66, D-44780 Bochum, Germany. E-mail: peter.eichelsbacher@rub.de}

\footnotetext[2]{Ruhr University Bochum, Faculty of Mathematics, NA 3/68, D-44780 Bochum, Germany. E-mail: christoph.thaele@rub.de}

\maketitle

\begin{abstract}
We combine Malliavin calculus with Stein's method to derive bounds for the Variance-Gamma approximation of functionals of isonormal Gaussian processes, in particular of random variables living inside a fixed Wiener chaos induced by such a process. The bounds are presented in terms of Malliavin operators and norms of contractions. We show that a sequence of distributions of random variables in the second Wiener chaos converges to a Variance-Gamma distribution if and only if their moments of order two to six converge to that of a Variance-Gamma distributed random variable (six moment theorem). Moreover, simplified versions for Laplace or symmetrized Gamma distributions are presented. Also multivariate extensions and a universality result for homogeneous sums are considered.

\bigskip

{\bf Keywords}. Contractions, cumulants, Gaussian processes, Laplace distribution, Malliavin calculus, non-central limit theorem,  rates of convergence, Stein's method, universality, Variance-Gamma distribution, Wiener chaos\\
{\bf MSC}. Primary 60F05, 60G15; Secondary 60H05, 60H07.
\end{abstract}

\section{Introduction}
Let $X = \{ X(h)\}_{h \in \EH }$ be an isonormal Gaussian process, defined on a probability space $(\Omega, \cF, \PP)$, over some real separable Hilbert space $\EH$, fix an integer $q \geq 2$ and let $\{F_n\}_{n\in\NN}$ be a sequence of random variables belonging to the $q$th Wiener chaos induced by $X$ (precise definitions follow in Section \ref{sec:Malliavin} below). Denote by $\EH^{\otimes q}$ and $\EH^{\odot q}$ the $q$th tensor product and the $q$th symmetric tensor product of $\EH$, respectively, and let $I_q$ be the isometry between $\EH^{\odot q}$ (equipped with the modified norm $\sqrt{q!}\, \| \,\cdot\, \|_{\EH^{\otimes q}}$) and the $q$th Wiener chaos of $X$. If $\EH$ in particular is an $L^2$-space of some $\sigma$-finite measure space without atoms, then a random variable $I_q(h)$ with $h \in \EH^{\odot q}$ has the form of a multiple Wiener-It\^o integral of order $q$. 

In recent years, many efforts have been made to characterize those sequences $\{F_n\}_{n\in\NN}$ belonging to a Wiener chaos of fixed order, which verify a central limit theorem in the sense that $F_n$ converges in distribution, as $n\to\infty$, to a centered Gaussian random variable $N$ of unit variance (compare with the book \cite{NP:book} for an overview). The celebrated fourth moment theorem
of Nualart and Peccati \cite{NualartPeccati:2005} asserts that if $\EE[F_n^2] =1$ for all $n \geq 1$, then, as $n \to \infty$, $F_n$ converges in distribution to $N$ if and only if $\EE[F_n^4]$ converges to $3$, the fourth moment of the Gaussian random variable $N$. This can be seen as a drastical simplification of the classical method of moments, which ensures convergence in distribution of $F_n$ to $N$, provided that \textit{all} moments of $F_n$ converge to that of $N$.

Only a few years later, Nourdin and Peccati \cite{NourdinPeccati:2009} combined Stein's method for normal approximation with the Malliavin calculus on the Wiener space of variations to prove explicit bounds on the total variation distance $d_{TV}(F_n,N) := \sup_{A \in \mathcal{B}(\RR)} |P(F_n \in A) - P(N \in A)|$, where the supremum runs over all bounded Borel sets $A \subset \RR$ (in fact, also other notions of distances have been considered in \cite{NourdinPeccati:2009}, but we restrict here to the total variation distance). They showed that for a sequence $\{F_n\}_{n\in\NN}$ of random variables of the form $F_n=I_q(h_n)$ with $h_n\in\EH^{\odot q}$ it holds that
\begin{equation}\label{eq:IntroRate}
d_{TV}(F_n,N) \leq 2 \sqrt{ \frac{q-1}{3q} ( \EE[F_n^4] -3)}=2 \sqrt{ \frac{q-1}{3q} \kappa_4(F_n)},
\end{equation}
where, for integers $j\geq 1$, we write $\kappa_j(X)$ for the $j$th cumulant (semi-invariant) of a random variable $X$. More recently, in \cite{NourdinPeccati:2013}, exact rates of convergence for the total variation distance have been found. Namely, if $F_n$ converges in distribution to $N$, as $n\to\infty$, then there exist two constants $0<c<C<\infty$ (possibly depending on $q$ and on $\{F_n\}$, but not on $n$), such that
\begin{equation}\label{eq:IntroRateOp}
c M(F_n) \leq d_{TV}(F_n,N) \leq C M(F_n)\quad\text{with}\quad
M(F_n) := \max \{ |\kappa_3(F_n)|,|\kappa_4(F_n)| \}
\end{equation}
for all $n\in\NN$, where, recall, $\kappa_3(F_n)=\EE[F_n^3]$ and $\kappa_4(F_n)=\EE[F_n^4]-3(\EE[F_n^2])^2=\EE[F_n^4] -3$ are the third and the fourth cumulant of $F_n$, respectively. In other words this means that the rate provided by \eqref{eq:IntroRate} is suboptimal by a squareroot factor. This, however, seems unavoidable using the Malliavin-Stein technique for normal approximation, which is based on the analysis of fourth moments, while the proof of \eqref{eq:IntroRateOp} uses more refined arguments.

The main goal of this paper is to study non-central limit theorems (i.e., limit theorems with a non-Gaussian limiting distribution) for sequences $\{F_n\}_{n\in\NN}$ belonging to a fixed Wiener chaos of order $q\geq 2$, as above. A first step in this direction is the paper \cite{NourdinPeccati:2009b} by Nourdin and Peccati, in which conditions on the sequence $\{F_n\}_{n\in\NN}$ have been derived, under which convergence towards a centred Gamma distribution takes place. Moreover, in \cite{NourdinPeccati:2009} rates of convergences for such Gamma approximations were considered, again by applying Stein's method. It is an interesting fact that if $q$ is an odd integer, there is no sequence of chaotic random variables with bounded variances converging in distribution to a centred Gamma distribution. This is a consequence of the fact that a random variable $I_q(h)$ with $h\in\EH^{\odot q}$ with $q$ being odd has third moment equal to zero, while the third moment of a centred Gamma distribution is strictly positive. Beyond the 
normal and Gamma approximation results in \cite{NourdinPeccati:2009}, up to our best knowledge there are no other quantitative limit theorems for chaotic sequences so far. Our paper is an attempt to fill this gap in case of the broad class of so-called Variance-Gamma distributions. This is a $3$-parametric family of continuous probability distributions on the real line defined as variance-mean mixtures of Gaussian random variables with a Gamma mixing distribution. We emphasize that Variance-Gamma distributions are widely used in financal mathematics, for example. Particular examples of Variance-Gamma distributions are the Laplace distribution or more general symmetrized Gamma distributions, but also the classical normal or Gamma distribution, which show up as limiting cases. It is interesting to see that in our set-up, no parity condition on $q$ is necessary in general. It is also worth mentioning the recent work \cite{KusuokaTudor:2013} of Kusuoka and Tudor, where it has been shown that within the so-called 
Pearson-family of probability distributions on the real line, only the normal and the Gamma distribution can appear as limit laws for a sequence $\{F_n\}_{n\in\NN}$ of chaotic random variables. The Laplace distribution, the symmetrized Gamma distribution and, more generally, the Variance-Gamma distributions, are not members of the Pearson-family and this way our study goes beyond the theory developed in \cite{KusuokaTudor:2013}.

Besides the goal of identifying new limiting distributions for the sequence $\{F_n\}_{n\in\NN}$ as considered above, another source of motivation for our paper comes from the area of free probability. In \cite{DeyaNourdin:2012}, Deya and Nourdin studied the convergence of a sequence of multiple stochastic integrals with respect to a free Brownian motion to what they call the tetilla law, which can be regarded as the commutator of the well-known Marchenko-Pastur distribution. Our aim here is to identitfy the non-free analogue of this distribution and to prove a related limit theorem for multiple stochastic integrals with respect to the classical Brownian motion. We will see that the Laplace distribution with parameter $\sqrt{2}$, which, as already mentioned above, is contained in the class of Variance-Gamma distributions, can be seen as such an analogue, see Remark \ref{rem:tetilla} below for a more detailed description. 

\medskip

Let us describe our results and the structure of our paper in some more detail. In Section \ref{sec:Malliavin} we collect some background material related to isonormal Gaussian processes and the Malliavin calculus of variations on the Wiener space. We will recall in particular the definitions of the so-called  $\Gamma$-operators, which are central for our further investigations. Essential elements of Stein's method for Variance-Gamma distributions are reviewed in Section \ref{subsec:Stein}. In particular, we state bounds on the solution of the Stein equation and introduce some particular subclasses and limiting cases of Variance-Gamma distributions, which are of special interest. An abstract bound for the Wasserstein distance $d_W(F,Y)$ between a (suitably regular) functional $F$ of an isonormal Gaussian process and a Variance-Gamma distributed random variable $Y$ in the spirit of the Malliavin-Stein method is derived in Section \ref{sec:GeneralBound}. We will see that the bound is expressed in terms of the $\Gamma$-operators 
mentioned above. This general bound is specialized in Section \ref{subsec:Generalqgeq2} to the case of elements living inside a fixed Wiener chaos of order $q\geq 2$. We derive a sufficient analytic criterium in terms of contractions for such a sequence to converge in distribution to a Variance-Gamma distributed random variable. In this context, we also recover the fourth moment theorem discussed above together with a rate of convergence for the Wasserstein distance, which improves \eqref{eq:IntroRate}, but is still not optimal in view of \eqref{eq:IntroRateOp}. Our general bound is specialized in Section \ref{double} to the case $q=2$, which is of particular interest in view of the theory of quadratic forms, for example. We show that in this case the previously derived sufficient criterium for convergence turns out to be necessary and, moreover, equivalent to a simple moment condition involving moments of up to order six only. For example, we 
show that a sequence $\{F_n\}_{n\in\NN}$ of elements belonging to a second Wiener chaos converges in distribution to a random variable $Y$ having a Laplace distribution with parameter $b>0$ if and only if 
\begin{equation}\label{eq:IntroMoments}
\EE[F_n^2]\to 2b^2,\quad\EE[F_n^4]\to 4! b^4\quad\text{and}\quad
\EE[F_n^6] \to 6! b^6,
\end{equation}
as $n\to\infty$. This this case, we also have the bound 
\begin{equation}\label{eq:IntoBound}
\begin{split}
d_W ( F_n, Y) &\leq C_1
\Big( \frac{1}{120} \kappa_6(F_n) - \frac{1}{6} \kappa_4(F_n) \kappa_2(F_n) + \frac{1}{4} \kappa_2(F_n)^3 +  \frac 16 \kappa_3(F_n)^2 \Big)^{1/2}\\
&\hspace{6cm}+C_2 \big| 2b^2 - \kappa_2(F_n) \big|
\end{split}
\end{equation}
on the Wasserstein distance, where, recall, $\kappa_j(F_n)$ stands for the $j$th cumulant of $F_n$ and where $C_1,C_2>0$ are constants only depending on the parameter $b$. We like to emphasize the following interesting observation. Namely, although the third cumulant $\kappa_3(F_n)$ shows up in the bound \eqref{eq:IntoBound}, it automatically vanishes in the limit, as $n\to\infty$, under the moment condition \eqref{eq:IntroMoments}. Our result can be seen as a six moment theorem for the convergence to a Laplace distribution on the second Wiener chaos. An analogue for general Variance-Gamma distributions is one of the main achievements of this paper. We mention that this is also closely connected to the work \cite{NourdinPoly:2012} (see also the erratum \cite{NourdinPoly:2012ERR}) of Nourdin and Poly, who characterize convergence of a sequences of random elements inside the second Wiener chaos associated with the ordinary (and the free) Brownian motion in terms of conditions on a sequence of consequtive moments. However, their results do not allow to derive rates of convergence. In the final Section \ref{deJong} we deal with a universality question for so-called homogeneous sums with respect to Variance-Gamma convergence as well as with some multivariate extensions of the previously derived results.

\medskip
The results of our paper complement those obtained in the recent study of Azmoodeh, Peccati and Poly \cite{APP:2014}, which has independently been conducted in parallel with our paper. They derive necessary and sufficient conditions under which a sequence $\{F_n\}_{n\in\NN}$ as above converges to a limiting random variable, whose distribution is a finite linear combination of centred $\chi^2$-distributions. However, these limit theorems are not quantitative in the sense that they just state the convergence in distribution without giving upper bounds on the rate of convergence. On the other hand, the results are for sequences living inside a Wiener chaos of arbitrary order.

\section{Elements of Gaussian analysis and Malliavin calculus}\label{sec:Malliavin}

\paragraph{Isonormal Gaussian processes.}

Here we collect the essentials of Gaussian analysis and Malliavin calculus that are used in the paper, see the books \cite{Nualart:2006} and
\cite{NP:book} for further details.

For a real separable Hilbert space $\EH$ and $q \geq 1$, we write $\EH^{\otimes q}$ and $\EH^{\odot q}$ to indicate, respectively, the $q$th tensor power and the $q$th symmetric tensor power of $\EH$ with convention $\EH^{\otimes 0} = \EH^{\odot 0} =\RR$. We denote by $X = \{ X(h) \}_{h \in \EH }$ an isonormal Gaussian process over $\EH$, i.e., $X$ is a centred Gaussian family, defined on some probability space $(\Omega, \cF, \PP)$ and indexed by $\EH$, with a covariance structure given by the relation $\EE [X(h) X(g) ] = \langle h, g \rangle_{\EH}$. We assume that $\cF = \sigma(X)$. For $q \geq 1$, the symbol $\cH_q$ denotes the $q$th Wiener chaos of $X$, defined as the closed linear subspace of $L^2(\Omega, \cF, \PP)=:L^2(\Omega)$, which is generated by the family $\{ H_q(X(h)) : h \in \EH, \|h\|_{\EH} =1 \}$, where $H_q(x) =(-1)^q e^{x^2/2} \frac{d^q}{dx^q} (e^{-x^2/2})$ is the $q$th Hermite polynomial. For any $q \geq 1$ the mapping $I_q(h^{\otimes q}) = H_q(X(h))$ can be extended to a linear isometry 
between $\EH^{\odot q}$ and the $q$th Wiener chaos $\cH_q$.
For $q=0$ we write $I_0(c)=c$, $c \in \RR$. When $\EH = L^2(A, \cA, \mu) =: L^2(\mu)$ with $\mu$ being a non-atomic $\sigma$-finite measure on the measurable space $(A, \cA)$, for every $f \in \EH^{\odot q} =L_s^2(\mu^q)$ the random variable $I_q(f)$ coincides with the $q$-fold multiple Wiener-It\^o-integral of $f$ with respect to the centred Gaussian measure canonically generated by $X$, see \cite[Section 1.1.2]{Nualart:2006}. Here, $L_s^2(\mu^q)$ stands for the subspace of $L^2(\mu^q)$ composed by symmetric functions. It is well-known that $L^2(\Omega)$ can be decomposed into the infinite orthogonal sum of the spaces $\cH_q$. Hence an $F \in L^2(\Omega)$ admits the Wiener-It\^o chaotic expansion
\begin{equation} \label{chaos}
F = \sum_{q=0}^{\infty} I_q(f_q),
\end{equation}
with $f_0 = \EE[F]$, and the $f_q \in \EH^{\odot q}, q \geq 1$, uniquely determined by $F$.

Let $\{ e_n\}_{n\in\NN}$ be a complete orthonormal system in $\EH$. For $f \in \EH^{\odot p}$ and  $g \in \EH^{\odot q}$, for every $r =0, \ldots, p \wedge q$, the contraction of $f$ and $g$ of order $r$ is the element of $\EH^{\otimes (p+q-2r)}$ defined by
$$
f \otimes_r g = \sum_{i_1, \ldots, i_r =1}^{\infty} \langle f, e_{i_1} \otimes \cdots \otimes e_{i_r} \rangle_{\EH^{\otimes r}} \otimes 
\langle g, e_{i_1} \otimes \cdots \otimes e_{i_r} \rangle_{\EH^{\otimes r}}.
$$ 
It is important to notice that the definition of $f \otimes_r g$ does not depend on the particular choice of $\{ e_n\}_{n\in\NN}$, and that $f \otimes_r g$ is not necessarily symmetric. We denote its canonical symmetrization by $f \widetilde{\otimes}_r g \in \EH^{\odot (p+q-2r)}$. Clearly, $f \otimes_0 g = f \otimes g$ and
$f \otimes_q g = \langle f, g \rangle_{\EH^{\otimes q}}$. Moreover, when $\EH = L^2(\mu)$ and $r =1, \ldots, p \wedge q$, the contraction $f \otimes_r g$ is the element of $L^2(\mu^{p+q-2r})$ given by
\begin{eqnarray*}
f  \otimes_r g(x_1, \ldots, x_{p+q-2r})  =  \\
&& \hspace{-3cm} \int_{A^r} f(x_1, \ldots, x_{p-r}, a_1, \ldots, a_r) \, g(x_{p-r+1}, \ldots, x_{p+q-2r}, a_1, \ldots, a_r) d(\mu^r(a_1,\ldots,a_r)).
\end{eqnarray*} 
We will intensively use the isometry property and the product formula for multiple integrals, i.e.\ elements of a fixed Wiener chaos. Namely, if $f \in \EH^{\odot p}$ and $g \in \EH^{\odot q}$, and $1 \leq q \leq p$, then
\begin{equation} \label{isometry}
\EE \bigl[ I_p(f) I_q(g) \bigr] =  p! \langle f, g \rangle_{\EH^{\otimes p}}\,{\bf 1}(p=q),
\end{equation}
and
\begin{equation} \label{productrule}
I_p(f) \, I_q(g) = \sum_{r=0}^{p \wedge q} r! {p \choose r} {q \choose r} I_{p+q-2r} ( f \widetilde{\otimes}_r g ),
\end{equation}
see \cite[Proposition 1.1.3]{Nualart:2006}.

\paragraph{Malliavin operators.}
Let $X$ be an isonormal Gaussian process and let $\cS$ be the set of random variables of the form $F= g(X(\phi_1), \ldots, X(\phi_n))$
with $n \geq 1$, $\phi_1,\ldots,\phi_n\in\EH$ and $g: \RR^n \to \RR$ an infinitely differentiable function whose partial derivatives have polynomial growth. The Malliavin derivative of $F$ with respect to $X$ is the element of $L^2(\Omega, \EH)$ defined as
$$
D F = \sum_{i=1}^n \frac{\partial g}{\partial x_i} (X(\phi_1), \ldots, X(\phi_n)) \phi_i.
$$
Hence $D X(h) =h$ for $h \in \EH$. By iteration, the $m$th derivative $D^m F$ is an element of $L^2(\Omega, \EH^{\odot m})$
for every $m \geq 2$. For $m \geq 1$ and $p\geq 1$, $\DD^{m,p}$ denotes the closure of $\cS$ with respect to the norm
$$
\|F\|_{m,p}^p = \EE [|F|^p] + \sum_{i=1}^m \EE \bigl[ \| D^i F\|_{\EH^{\otimes i}}^p \bigr].
$$
We use the notation $\DD^{\infty} := \bigcap_{m\geq 1} \bigcap_{p \geq 1} \DD^{m,p}$. Every finite linear combination of
multiple Wiener-It\^o integrals is an element of $\DD^{\infty}$ and its law admits a density with respect to the Lebesgue measure on the real line.
The Malliavin derivative satisfies the following chain rule. If $\varphi : \RR^n \to \RR$ is continuously differentiable with bounded partial derivatives and if $F=(F_1, \ldots, F_n)$ is a vector of elements of $\DD^{1,2}$, then $\varphi(F) \in \DD^{1,2}$ and
\begin{equation} \label{chainrule}
D  \varphi(F) = \sum_{i=1}^n \frac{\partial \varphi}{\partial x_i}(F) \, DF_i.
\end{equation}
If $\EH = L^2(A, \cA, \mu)$ with $\mu$ $\sigma$-finite and non-atomic, then the derivative of $F$ as in \eqref{chaos} is given by
\begin{equation} \label{diffchaos}
D_x F = \sum_{q=1}^{\infty} q \, I_{q-1} (f_q(\cdot,x)), \quad x \in A,
\end{equation}
where $f_q(\cdot,x)$ stands for the function $f_q$ with one of its arguments fixed to be $x$.
The adjoint of the operator $D$ is denoted by $\delta$ and called the divergence operator. A random element $u \in L^2(\Omega, \EH)$ belongs to the domain of $\delta$ ($\text{Dom}(\delta)$), if and only if it verifies $| \EE[ \langle DF, u \rangle_{\EH}]| \leq c_u \|F\|_{L^2(\Omega)}$ for any $F \in \DD^{1,2}$, where $c_u$ is a constant depending only on $u$. For $u \in \text{Dom}(\delta)$
the random variable $\delta(u)$ is defined by the integration-by parts formula
\begin{equation} \label{intby}
\EE [F \delta(u) ] = \EE [ \langle DF, u \rangle_{\EH} ],
\end{equation}
which holds for every $F \in \DD^{1,2}$. The infinitesimal generator of the Ornstein-Uhlenbeck semi-group is given by
$L = \sum_{q=0}^{\infty} - q \, J_q$, where $J_q(F) :=I_q(f_q)$ for every $F$ as in \eqref{chaos}. The domain of $L$ is $\DD^{2,2}$.
A random variable $F$ belongs to $\DD^{2,2}$ if and only if $F \in \text{Dom}(\delta D)$ (i.e., $F \in \DD^{1,2}$ and $DF \in \text{Dom}(\delta)$) and in this case,
\begin{equation} \label{rel}
\delta D F = - L F.
\end{equation}
For any $F \in L^2(\Omega)$ we define $L^{-1} F = \sum_{q=1}^{\infty} - \frac 1q J_q(F)$. The operator $L^{-1}$ is called the pseudo-inverse of $L$. For any $F \in L^2(\Omega)$ one has that $L^{-1}F \in \text{Dom} L = \DD^{2,2}$, and
\begin{equation} \label{inverse}
L L^{-1} F = F - \EE[F].
\end{equation}
The following result is used frequently throughout this paper (see \cite[Lemma 2.3]{BBNP:2012} and \cite[Lemma 2.1]{NourdinPeccati:2009b}).

\begin{lemma} \label{keyid}
(1) 
Suppose that $H \in \DD^{1,2}$ and $G \in L^2(\Omega)$. Then, $L^{-1} G \in \DD^{2,2}$ and
$$
\EE [H G] = \EE[H] \EE[G] + \EE [ \langle DH, -D L^{-1} G \rangle_{\EH} ].
$$
(2)
Suppose that $F=I_q(f)$ with $q \geq2$ and $f \in \EH^{\odot q}$. Then for every $s \geq 0$, we have
$$
\EE [F^s \|DF\|_{\EH}^2] = \frac{q}{s+1} \EE[F^{s+2}].
$$
\end{lemma}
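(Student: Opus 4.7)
The plan is to prove both parts by integration by parts, exploiting the duality $\delta D = -L$ recorded in \eqref{rel} together with the elementary action of $-L$ on Wiener chaoses.

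For part (1), I would first note that $L^{-1}G \in \mathrm{Dom}\,L = \DD^{2,2}$ by the characterisation of the pseudo-inverse given just after \eqref{inverse}, so that $DL^{-1}G$ automatically lies in $\mathrm{Dom}(\delta)$ by combining the definition of $\DD^{2,2}$ with \eqref{rel}. The identity $LL^{-1}G = G - \EE[G]$ from \eqref{inverse} then rewrites as $G - \EE[G] = -\delta(DL^{-1}G)$. Multiplying by $H \in \DD^{1,2}$, taking expectation, and applying the integration-by-parts formula \eqref{intby} with $F = H$ and $u = -DL^{-1}G$ yields
\[
\EE[H(G-\EE[G])] = \EE[\langle DH, -DL^{-1}G\rangle_\EH],
\]
which is precisely the stated identity after rearrangement.

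For part (2), I would apply the chain rule \eqref{chainrule} to $\varphi(x) = x^{s+1}$ to write
\[
\langle DF^{s+1}, DF\rangle_\EH = (s+1)\, F^s \, \|DF\|_\EH^2.
\]
Taking expectation and invoking the duality \eqref{intby} with $u = DF$ gives
\[
\EE\bigl[F^s\|DF\|_\EH^2\bigr] = \tfrac{1}{s+1}\,\EE\bigl[F^{s+1}\,\delta(DF)\bigr].
\]
Since $\delta D = -L$ by \eqref{rel} and $-L$ acts as multiplication by $q$ on the $q$th Wiener chaos, we have $\delta(DF) = qF$, and the claimed value $\frac{q}{s+1}\EE[F^{s+2}]$ follows immediately.

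The main obstacle is that \eqref{chainrule}, as stated, requires $\varphi$ to have bounded partial derivatives, while $x\mapsto x^{s+1}$ is unbounded; moreover, for non-integer $s$ one must even make sense of $F^s$ on the event $\{F<0\}$. I would deal with this by replacing $\varphi$ by smooth truncations $\varphi_N$ that coincide with $x^{s+1}$ on $[-N,N]$ and have bounded derivatives, applying the above argument to each $\varphi_N$, and passing to the limit $N\to\infty$. The limit is controlled by the hypercontractivity of the Wiener chaos $\cH_q$, which ensures that $F$ has moments of every order and that $\|DF\|_{\EH}^2 \in L^p(\Omega)$ for all $p\geq 1$, giving both finiteness of the expressions in the statement and the uniform integrability needed to exchange limit and expectation.
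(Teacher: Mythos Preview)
Your proposal is correct and follows essentially the same route as the paper: both parts are proved by writing $G-\EE[G]=LL^{-1}G=-\delta DL^{-1}G$ (respectively $\delta DF=-LF=qF$) and then applying the integration-by-parts formula \eqref{intby} together with the chain rule \eqref{chainrule}. Your additional paragraph on truncating $x\mapsto x^{s+1}$ and invoking hypercontractivity to pass to the limit is in fact more careful than the paper, which applies \eqref{chainrule} to the polynomial $\varphi(x)=x^{s+1}$ without further comment.
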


\begin{proof}
(1) By \eqref{rel} and \eqref{inverse} we observe that
$$
\EE [H G] -\EE[H] \EE[G] = \EE [H(G - \EE[G])] = \EE[H \, L L^{-1} G] = \EE [H \delta(-DL^{-1}G)].
$$
The result is obtained by using the integration-by-parts formula \eqref{intby}.

(2) By chain rule \eqref{chainrule}, the integration-by-parts formula \eqref{intby} and \eqref{rel} we obtain
$$
\EE [F^s \|DF\|_{\EH}^2] = \frac{1}{s+1} \EE [ \langle DF, D(F^{s+1}) \rangle_{\EH}] =  \frac{1}{s+1} \EE[\delta DF \times F^{s+1}] = \frac{q}{s+1} \EE[F^{s+2}],
$$
where we used that $-L I_q = q I_q= q F$.
\end{proof}

\paragraph{Cumulants and $\Gamma$-operators.}

Let $F$ be a real-valued random variable such that $\EE[|F|]^m < \infty$ for some integer $m \geq 1$ and define $\phi_F(t) = \EE[e^{itF}]$, $t \in \RR$, to be the characteristic function of $F$. Then, for $j=1, \ldots, m$, the $j$th cumulant of $F$, denoted by
$\kappa_j(F)$, is given by 
$$
\kappa_j(F) = (-i)^j \frac{d^j}{dt^j} \log \phi_F(t) \Big|_{t=0}.
$$
There is a well-known relation between cumulants and moments. In this paper, such a relation is needed for cumulants and moments up to order six, and only if $\EE[F]=0$. In this case, we have $\kappa_2(F) = \EE[F^2]$,  $\kappa_3(F) =\EE[F^3]$, $\kappa_4(F) = \EE[F^4] - 3 \EE[F^2]^2$ and $\kappa_6(F) = \EE[F^6] - 15 \EE[F^4] \EE[F^2] - 10 (\EE[F^3])^2 + 30 (\EE[F^2])^3$.

The cumulants can be characterized in terms of Malliavin operators. For this, we need to introduce the so-called $\Gamma$-operators $\Gamma_j$, $j\geq 1$. For $F \in \DD^{\infty}$ we define $\Gamma_1(F) = F$ and, for very $j \geq 2$,
\begin{equation} \label{gammadef}
\Gamma_j(F) = \langle DF, -DL^{-1} \Gamma_{j-1}(F) \rangle_{\EH}.
\end{equation}
Each $\Gamma_j(F)$ is well-defined and an element of $\DD^{\infty}$, since $F$ is assumed to be in $\DD^{\infty}$, see \cite[Lemma 4.2]{NourdinPeccati:2010}. According to \cite[Theorem 4.3]{NourdinPeccati:2010}, there is an explicit relation between $\Gamma_j(F)$ and the $j$th cumulant of $F$. Namely, if $F \in \DD^{\infty}$, then $F$ has finite moments of all orders and for each integer $j \geq 1$ it holds that
\begin{equation}\label{eq:KappaUndGamma}
 \kappa_j(F) = (j-1)! \EE [\Gamma_j(F)].
\end{equation}
The relation continuous to hold under weaker assumptions on the regularity of $F$, see \cite[Theorem 4.3]{NourdinPeccati:2010}. For $F \in \DD^{1,2}$, it follows that $\Gamma_2(F) \in L^1(\Omega)$ and $\VV[F]= \EE [\Gamma_2(F)]$ and for $F \in \DD^{1,4}$, it holds that $\Gamma_2(F) \in L^2(\Omega)$. 

If $F$ belongs to a fixed Wiener chaos (i.e, if $F$ has the form of a multiple integral if $\EH=L^2(\mu)$ as discussed above), there is a more explicit representation for $\Gamma_j(F)$, see Formula (5.25) in \cite{NourdinPeccati:2010}. To state it, let $q \geq 2$ and $F = I_q(f)$ with $f \in \EH^{\odot q}$. Then for any $j \geq 1$, applyling the product formula \eqref{productrule}, we have
 \begin{eqnarray} \label{gammarep}
 \Gamma_{j+1}(F) & = & \sum_{r_1=1}^q \cdots \sum_{r_j =1}^{[jq-2r_1- \ldots - 2 r_{j-1}] \wedge q} c_q(r_1, \ldots, r_j) 
 {\bf 1}_{ \{r_1 < q\} } \ldots {\bf 1}_{ \{r_1+ \ldots + r_{j-1} <  \frac{jq}{2} \} }  \\ \nonumber
 & & \qquad \times I_{(j+1)q - 2 r_1 - \ldots - 2 r_j} \bigl( ( \ldots (f \widetilde{\otimes}_{r_1} f)  \widetilde{\otimes}_{r_2} f) \ldots f)  \widetilde{\otimes}_{r_j} f \bigr),
 \end{eqnarray}
where the constants $c_q(r_1, \ldots, r_j)$ are recursively defined as follows:
 $$
 c_q(r) = q(r-1)! {q-1 \choose r-1}^2
 $$
 and for $a \geq 2$,
 $$
 c_q(r_1, \ldots, r_a) = q (r_a-1)! {aq -2 r_1 - \ldots - 2 r_{a-1} -1 \choose r_a -1} {q-1 \choose r_a -1} c_q(r_1, \ldots, r_{a-1}).
 $$

\section{Elements of Stein's method}\label{subsec:Stein}


\paragraph{Wasserstein distance and the standard normal distribution.}

 Stein's method is a set of techniques allowing to evaluate distances between probability measures. In the present paper, we focus on the Wasserstein distance ($L^1$-distance). For any two real-valued random variables $X$ and $Y$ it is defined as
 $$
 d_W(X,Y) : = \sup_{h \in \cL} \{ | \EE[h(X)] - \EE[h(Y)] | \}
 $$
 with $\cL := \{ h : \RR \to \RR: |h(x) -h(y)| \leq |x-y| \}$ (Lipschitz functions). We will make use of the fact that the elements in $\cL$ are exactly those absolutely continuous functions whose derivatives are a.e.\ bounded by $1$ in absolute value. We notice that $d_W(X_n,Y)\to 0$ as $n\to\infty$ for a sequence of random variables $\{X_n\}_{n\in\NN}$ implies convergence of $X_n$ to $Y$ in distribution (the converse is not necessarily true).
 
 A standard Gaussian random variable $Z$ is characterized by the fact that for every absolutely continuous function $f:\RR\to\RR$ for which $\EE\big[Zf(Z)\big]<\infty$ it holds that 
 \begin{equation} \label{steinnormal}
 \EE\big[f'(Z)-Zf(Z)\big]=0.
 \end{equation}
 This together with the definition of the Wasserstein distance is the motivation to study the Stein equation
\begin{equation}\label{eq:SteinEquation}
 f'(x)-xf(x) = h(x) - \EE[h(Z)],\quad x\in\RR.
\end{equation}
 A solution of the Stein equation is a function $f_h$, depending on $h$, which satisfies \eqref{eq:SteinEquation}. For $h \in \cL$, $f_h$ is bounded
 and twice differentiable such that $\|f_h'\|_{\infty} \leq 1$ and $\|f_h''\|_{\infty} \leq 2$, see \cite[Lemma 2.3]{Steinbuch2010}.
 If we replace $x$ by a random variable $F$ and take expectations in the Stein equation \eqref{eq:SteinEquation}, we infer that $$\EE\big[f_h'(F)-Ff_h(F)\big]=
\EE[h(F)] - \EE[h(Z)] $$ and hence $$d_W(F,Z) \leq \sup \{  | \EE[ f'(F)-F f(F)] |  : \|f'\|_{\infty} \leq 1 \, \text{and} \, \|f''\|_{\infty} \leq 2 \}.$$
With \eqref{steinnormal}, we obtain that for every $h \in \EH$ such that $\|h\|_{\EH}=1$ we have for smooth functions $f$ that $\EE\big[f'(X(h))-X(h)f(X(h))\big]=0$. It is a particular case of the consequence of Lemma \ref{keyid}(1), that for every $F \in \DD^{1,2}$ with mean zero, $\EE [ F f(F)] = 
\EE [ \langle DF, -D L^{-1} F \rangle_{\EH} f'(F)]$. Hence, by Cauchy-Schwarz, we obtain
$$
d_W(F,Z) \leq \EE \bigl[ ( 1 - \langle DF, -D L^{-1} F \rangle_{\EH} )^2 \bigr]^{1/2},
$$
(see \cite[Theorem 3.1]{NourdinPeccati:2009}).

\paragraph{Symmetric Gamma distributions.}

The main goal of our paper is to consider probabilistic approximations by Variance-Gamma random variables. To motivate the right choice of a Stein equation, first let us consider the case of Laplace distribution or, more generally, symmetrized Gamma distribution. The Lebesgue-density of a Laplace distribution with parameter $b$ is given by
\begin{equation} \label{laplacedensity}
p_b(x) = \frac{1}{2b} \exp \biggl( - \frac{|x|}{b} \biggr), \quad x \in \RR, b >0,
\end{equation}
while the Lebesgue-density of a symmetrized Gamma distribution with parameters $\lambda>0$ and $r>0$ equals
\begin{equation} \label{gammadensity}
p_{\lambda, r} (x) = \frac{\lambda^r}{ 2\Gamma(r)} |x|^{r-1} e^{-\lambda |x|}, \quad x \in \RR.
\end{equation}
In what follows we shall indicate the distribution with density $p_{\lambda,r}$ by $\Gamma_s(\lambda,r)$ and by $\Gamma(\lambda,r)$ we denote the non-symmetric (i.e., classical) Gamma distribution. Note that the choice $r=1$ and $\lambda = 1/b$ leads to the Laplace distribution with density as at \eqref{laplacedensity}. A first-order Stein operator for 
a random variable with density $p_b$ can be obtained by the so-called density approach, see \cite{DiaconisStein:2004}. In fact, if $Y$ has Lebesgue-density $p_b$, then $\EE[f'(Y) - \frac{p_b'(Y)}{p_b(Y)} f(Y) ] =0$ for all absolutely continuous $f$ for which the expectation exists. However, $\frac{p_b'(Y)}{p_b(Y)}= \text{sign}(Y)$ and it is in general technically highly sophisticated or even impossible to compute $\EE [ \text{sign}(Y) f(Y)]$. To overcome this difficulty, we put, if $f'$ is absolutely continuous, $G(x) = \text{sign}(x) (f(x)-f(0))$, to see that
$$
\EE [f''(Y)] = \frac 1b \EE[ \text{sign}(Y) f'(Y)] = \frac 1b \EE [G'(Y)] = \frac{1}{b^2} (\EE[f(Y)] -f(0)).
$$
Summarizing, we obtain that if $Y$ has a Laplace distribution with parameter $b$, $f$ and $f'$ are absolutely continuous functions and $\EE[f(Y)]$ exists, that
\begin{equation} \label{pikeren}
\EE[f(Y)] -f(0) = b^2 \EE[f''(Y)],
\end{equation}
see \cite[Lemma 1]{PikeRen:2012}. A major disadvantage of this characterization is that the machinery of Malliavin calculus usually enters by $\EE [ F f(F)] = \EE [ \langle DF, -D L^{-1} F \rangle_{\EH} f'(F)]$. Hence, we substitute $f(Y)$ by $Y f(Y)$ and obtain $\EE[Y f(Y)] = b^2 \EE[X f''(Y)+ 2 f'(Y)]$ if $Y$ is Laplace distributed with parameter $b$. It is interesting to see that this leads to the same Stein characterization of a Laplace distribution (and similarly of a $\Gamma_s(\lambda,r)$- distribution) as introduced recently in \cite{Gaunt:2013} from an entirely different perspective. We emphasize that second-order Stein operators are not commonly used in the literature, although in \cite{Pekoez:2013} the authors obtained a second-order Stein operator for the so-called 
Kummert-$U$ density.

\begin{lemma} \label{gaunt}
Let $Y$ be a real valued random variable. Then $Y$ is distributed according to the symmetrized Gamma distribution \eqref{gammadensity} with parameters $r$ and $\lambda$
if and only if, for all $f : \RR \to \RR$ such that $f$ is piecewise twice continuously differentiable and $\EE[|Y f''(Y)|]$, $\EE [ |f'(Y)|]$ and $\EE [|Y f(Y)|]$ are finite, we have
\begin{equation} \label{steinchgamma}
\EE \bigl[ \frac{1}{\lambda^2} Y f''(Y) + \frac{2 r}{\lambda^2} f'(Y) - Y f(Y) \bigr] =0.
\end{equation} 
\end{lemma}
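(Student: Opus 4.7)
The natural approach is Stein's density method, extending to the symmetrized Gamma law the blueprint sketched in the text for the Laplace special case. For the forward direction, I would first compute the score function of $p_{\lambda,r}$. Logarithmic differentiation yields $\rho(x):=p'_{\lambda,r}(x)/p_{\lambda,r}(x)=(r-1)/x-\lambda\,\mathrm{sign}(x)$ for $x\neq 0$, and the density approach of \cite{DiaconisStein:2004} produces the first-order Stein identity $\EE[f'(Y)]=\EE[\rho(Y)f(Y)]$. This identity contains both a $\mathrm{sign}$-factor and a $1/x$-singularity that are inconvenient under the Malliavin calculus, just as in the Laplace computation already carried out in the text above the lemma.

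To arrive at \eqref{steinchgamma} I would carry out two integration-by-parts steps that successively absorb these awkward terms. The key observation is that $xp_{\lambda,r}(x)$, viewed separately on $(0,\infty)$ and on $(-\infty,0)$, is smooth and vanishes at the endpoints $0$ and $\pm\infty$ (polynomial-times-exponential decay plus the extra factor $x$ kills the boundary at the origin for all $r>0$). Hence one can integrate $\int_0^\infty xf''(x)p_{\lambda,r}(x)\,\mathrm{d}x$ twice by parts, each time differentiating the weight $xp_{\lambda,r}(x)$ rather than $f$, and proceed analogously on $(-\infty,0)$. Adding the two half-line contributions, the boundary terms cancel and the residual $\mathrm{sign}$- and $1/x$-terms disappear once the resulting expression is combined with the first-order score identity. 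This is exactly the mechanism by which the Laplace reduction
\[
\EE[f''(Y)]=b^{-2}\bigl(\EE[f(Y)]-f(0)\bigr)\ \leadsto\ \EE[Yf(Y)]=b^{2}\EE[Yf''(Y)+2f'(Y)]
\]
is obtained in the text, with the general shape exponent $r-1$ producing the coefficient $2r/\lambda^{2}$ in front of $f'(Y)$.

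For the converse I would use a uniqueness argument: show that the Stein equation
\[
\tfrac{1}{\lambda^{2}}xf''(x)+\tfrac{2r}{\lambda^{2}}f'(x)-xf(x)=h(x)-\EE[h(Y)]
\]
admits, for every $h$ in a separating class (e.g.\ bounded Lipschitz $h$ or $h(x)=e^{itx}$), a solution $f_{h}$ satisfying the regularity and integrability hypotheses of the lemma; then any random variable $Z$ verifying the Stein identity must satisfy $\EE[h(Z)]=\EE[h(Y)]$ for all such $h$, forcing $Z\stackrel{d}{=}Y$. As a more elementary alternative, substituting $f(y)=y^{n}$ into \eqref{steinchgamma} yields a recursion that pins down every moment of $Z$, and moment-determinacy of $\Gamma_{s}(\lambda,r)$ (its moment generating function is finite on a neighbourhood of the origin) closes the argument. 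I expect the main obstacle to lie in the forward direction: carefully tracking the boundary contributions at $0$ (where $|x|^{r-1}$ is non-smooth, and even singular when $r<1$) and verifying that the moment hypotheses $\EE[|Yf''(Y)|]$, $\EE[|f'(Y)|]$, $\EE[|Yf(Y)|]<\infty$ are sufficient to legitimise every integration by parts and every cancellation of $\mathrm{sign}$- and $1/x$-terms used along the way.
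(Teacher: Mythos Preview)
The paper does not actually give a proof of this lemma: it is stated without proof, with the implicit attribution to Gaunt \cite{Gaunt:2013} (``the same Stein characterization \ldots\ as introduced recently in \cite{Gaunt:2013}''). The only argument the paper supplies is the informal derivation for the Laplace special case in the paragraph immediately preceding the lemma, which is precisely the blueprint you are following.

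Your proposal is correct and matches that blueprint. The forward direction via the score function $\rho(x)=(r-1)/x-\lambda\,\mathrm{sign}(x)$ and the two integrations by parts against the weight $x\,p_{\lambda,r}(x)$ is exactly how the coefficient $2r/\lambda^2$ arises; your identification of the boundary behaviour at $0$ (the extra factor $x$ killing the singularity of $|x|^{r-1}$) is the key point. For the converse, either of your suggested routes (solving the Stein equation for a separating class, or the moment recursion plus moment-determinacy) is standard and works; the moment route is in fact the one the paper itself exploits in Lemma~\ref{gammamoments}. There is nothing to correct.
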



Lemma \ref{gaunt} suggests the following Stein equation for the $\Gamma_s(\lambda,r)$-distribution:
\begin{equation} \label{steinidgamma}
\frac{1}{\lambda^2} x f''(x) + \frac{2 r}{\lambda^2} f'(x) - xf(x) = h(x) - \Gamma_s(\lambda, r)(h),
\end{equation}
where $\Gamma_s(\lambda, r)(h)$ denotes the quantity $\EE[h(Y)]$ with a random variable $Y$ distributed according to $\Gamma_s(\lambda,r)$.
The following lemma collects bounds on the solution $f_h$ of \eqref{steinidgamma} and its first and second derivative, see \cite[Theorem 3.6]{Gaunt:2013} for a proof. In what follows, we denote by $g^{(j)}$ the $j$th derivative of a function $g:\RR\to\RR$. 

\begin{lemma} \label{bounds}
Suppose that $h \in C_b^1(\RR)$, and $r \in \ZZ_+$ and $\lambda >0$, then the solution $f_h$ of the Stein equation \eqref{steinidgamma} and
its derivatives up to order two satisfy 
$$
\| f_h^{(j)} \|_{\infty} \leq c_j(\lambda,r)  \|h - \Gamma_s(\lambda, r)(h)\|_{\infty}, \quad j=0,1,
$$
$$
\| f_h^{(2)} \|_{\infty} \leq c_2^1(\lambda,r) \|h'\|_{\infty} + c_2^2(\lambda,r)  \|h - \Gamma_s(\lambda, r)(h)\|_{\infty}, 
$$
where
$c_0(\lambda,r) = \frac{1}{\sqrt{\lambda}} \Big( \frac 1r + \frac{\pi \Gamma(r/2)}{2 \Gamma(r/2+1/2)} \Big)$, $c_1(\lambda,r) = \frac{1}{\lambda} \Big( \frac 1r + \frac{1}{r+1} \Big)$, $c_2^1(\lambda,r)= \frac{3}{\lambda} \Big( \frac{\sqrt{\pi}}{\sqrt{2r+3}} + \frac 1r \Big)$ and $c_2^2(\lambda,r)= \frac{4}{\lambda^{3/2}} 
\Big( \frac{\sqrt{\pi}}{\sqrt{2r+3}} + \frac 1r \Big)$.
\end{lemma}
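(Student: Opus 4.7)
The plan is to explicitly solve the ODE \eqref{steinidgamma} and then estimate the solution by means of the resulting integral representation, exploiting that $p_{\lambda,r}$ is an even function so that the problem partly decouples into one-sided problems on $(0,\infty)$ and $(-\infty,0)$.

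To begin, I would establish existence of a bounded $C^2$ solution $f_h$ of \eqref{steinidgamma}. Because $p_{\lambda,r}$ is symmetric, writing $\tilde h(x) = h(x) - \Gamma_s(\lambda,r)(h)$ and splitting $\tilde h = \tilde h_{\mathrm{even}} + \tilde h_{\mathrm{odd}}$, the Stein equation separates into two problems, one producing an even solution, the other an odd one. Restricted to $(0,\infty)$, each becomes a Stein equation for a one-sided Gamma-type density with a coefficient $x/\lambda^2$ multiplying $f''$, and can be solved by variation of parameters using the two independent solutions of the associated homogeneous equation; for integer $r$ the latter can be written in closed form in terms of modified Bessel functions. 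This produces an expression
\[
f_h(x) \;=\; \int_{\RR} K_{\lambda,r}(x,y)\,\tilde h(y)\,dy
\]
for an explicit kernel $K_{\lambda,r}$.

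The stated bounds on $\|f_h\|_\infty$ and $\|f_h'\|_\infty$ would then follow by taking absolute values inside the integral representation (and its formal derivative) and bounding the $L^1$-norms of the kernels in $y$ uniformly in $x$. The explicit constants $c_0(\lambda,r)$ and $c_1(\lambda,r)$ are obtained by evaluating these $L^1$-norms via standard Gamma-function identities; the factor $\Gamma(r/2)/\Gamma(r/2+1/2)$ in $c_0$ is the tell-tale fingerprint of a Beta-integral that appears when $r$ is a positive integer, while the $1/r + 1/(r+1)$ structure in $c_1$ reflects the first two moments of $\Gamma_s(\lambda,r)$.

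For the second derivative, I would rearrange \eqref{steinidgamma} to read
\[
f_h''(x) \;=\; \lambda^2 f_h(x) \;-\; \frac{2r}{x}\,f_h'(x) \;+\; \frac{\lambda^2}{x}\,\tilde h(x), \qquad x \neq 0,
\]
and insert the previously obtained bounds on $f_h$ and $f_h'$. The singularity at $x=0$, where the leading coefficient $x/\lambda^2$ of $f''$ vanishes, is the crucial obstacle, and is precisely what forces $\|h'\|_\infty$ into the final estimate. To cancel it, I would decompose $\tilde h(x) = (h(x)-h(0)) + (h(0) - \Gamma_s(\lambda,r)(h))$; the first summand is controlled by $|x|\,\|h'\|_\infty$ and therefore absorbs the factor $1/x$, yielding the $c_2^1$ term, while for the second I would show that $\tfrac{2r}{x} f_h'(x) - \tfrac{\lambda^2}{x}(h(0)-\Gamma_s(\lambda,r)(h))$ remains bounded as $x \to 0$ by using the integral expression for $f_h'$ together with a Taylor expansion at the origin. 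This last cancellation near zero is the hard part of the argument, and careful bookkeeping of the prefactors produces the stated $\lambda^{-1}$ and $\lambda^{-3/2}$ scalings in $c_2^1(\lambda,r)$ and $c_2^2(\lambda,r)$.
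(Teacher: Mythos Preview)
The paper does not actually prove this lemma: immediately before the statement it says ``see \cite[Theorem 3.6]{Gaunt:2013} for a proof'', and no argument is given in the paper itself. So there is no in-paper proof to compare against; the result is simply quoted from Gaunt.

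Your outline is broadly in the spirit of Gaunt's approach: one writes down the two linearly independent solutions of the homogeneous version of \eqref{steinidgamma} in terms of modified Bessel functions $I_\nu$ and $K_\nu$ (this is where the restriction $r\in\ZZ_+$ enters, since it simplifies the Bessel asymptotics), builds $f_h$ by variation of constants, and then estimates the resulting integral representations. The singularity at $x=0$ is indeed the delicate point for the $f_h''$ bound, and the need for $\|h'\|_\infty$ arises exactly there, as you say. That said, what you have written is a strategy sketch rather than a proof: the specific constants $c_0,c_1,c_2^1,c_2^2$ come from fairly intricate Bessel-function inequalities and integral evaluations carried out in \cite{Gaunt:2013,Gaunt:thesis}, and your heuristic explanations for the form of the constants (Beta integrals, first two moments) are suggestive but do not by themselves pin down the stated numerical values or the precise $\lambda$-powers. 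If you want a self-contained proof you would need to reproduce those Bessel estimates; otherwise, citing Gaunt as the paper does is the standard route.
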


The Stein-type characterization \eqref{steinchgamma} for the $\Gamma_s(\lambda, r)$-distribution also allows a neat computation of its moments or cumulants. We state the result here only for the moments and cumulants of order $2$, $4$ and $6$ as they will play a major role later in this paper.

\begin{lemma} \label{gammamoments}
Let $Y$ be distributed according to $\Gamma_s(\lambda, r)$. Then all odd moments and cumulants of $Y$ are identically zero,
$$
\EE[Y^2]= \frac{2r}{\lambda^2}, \quad \EE[Y^4]=  \frac{12 r(r+1)}{\lambda^4}, \quad \EE[Y^6]=\frac{120 r(r+1)(r+2)}{\lambda^6}
$$
and
$$
\kappa_2(Y) = \frac{2r}{\lambda^2},\quad\kappa_4(Y) = \frac{12r}{\lambda^4},\quad\kappa_6(Y) = \frac{240r}{\lambda^6}.
$$
\end{lemma}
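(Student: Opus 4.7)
The plan is to use the Stein characterization \eqref{steinchgamma} as a moment recursion and then convert to cumulants via the relations recorded earlier in Section~\ref{sec:Malliavin}. The density $p_{\lambda,r}$ has exponentially decaying tails, so polynomial test functions automatically satisfy the integrability hypotheses of Lemma~\ref{gaunt}, and $Y$ has finite moments of all orders.

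For the odd moments, the density $p_{\lambda,r}(x)$ is an even function of $x$, so $\EE[Y^{2k+1}]=0$ for every $k\geq 0$. Equivalently, $Y$ is distributed as $X_1-X_2$ with $X_1,X_2$ i.i.d.\ $\Gamma(\lambda,r)$, so the characteristic function $\phi_Y$ is real, and hence $\log\phi_Y$ is an even function of $t$, which forces all odd cumulants to vanish as well.

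For the even moments, I plug the polynomial $f(x)=x^{2k-1}$ into the identity $\EE[Yf(Y)] = \frac{1}{\lambda^2}\EE[Yf''(Y)] + \frac{2r}{\lambda^2}\EE[f'(Y)]$ coming from \eqref{steinchgamma}. Since $f'(x)=(2k-1)x^{2k-2}$ and $f''(x)=(2k-1)(2k-2)x^{2k-3}$, this yields the recursion
\[
\EE[Y^{2k}] \;=\; \frac{(2k-1)(2k-2+2r)}{\lambda^2}\,\EE[Y^{2k-2}]\,,\qquad k\geq 1\,,
\]
with $\EE[Y^0]=1$. Iterating for $k=1,2,3$ gives exactly the three claimed values $\frac{2r}{\lambda^2}$, $\frac{12r(r+1)}{\lambda^4}$ and $\frac{120r(r+1)(r+2)}{\lambda^6}$.

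To pass from moments to cumulants, I substitute into the explicit moment-cumulant formulas recalled in Section~\ref{sec:Malliavin}: using $\EE[Y]=\EE[Y^3]=0$ we have $\kappa_2(Y)=\EE[Y^2]$, $\kappa_4(Y)=\EE[Y^4]-3\EE[Y^2]^2$ and $\kappa_6(Y)=\EE[Y^6]-15\EE[Y^4]\EE[Y^2]+30\EE[Y^2]^3$. A short arithmetic check shows that the quadratic and cubic terms in $r$ cancel, leaving $\kappa_4(Y)=\frac{12r}{\lambda^4}$ and $\kappa_6(Y)=\frac{240r}{\lambda^6}$. No serious obstacle arises; the only care needed is to verify that the cancellations in the $\kappa_6$ computation really collapse to a single linear-in-$r$ term, which is just the routine expansion of $120r(r+1)(r+2)-360r^2(r+1)+240r^3=240r$.
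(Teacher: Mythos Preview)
Your proof is correct and follows essentially the same route as the paper: plug odd-power polynomials $f(x)=x,\,x^3,\,x^5$ into the Stein identity \eqref{steinchgamma} to obtain the even-moment recursion, then convert to cumulants via the moment--cumulant relations of Section~\ref{sec:Malliavin}. Your write-up is slightly more explicit (general recursion in $k$, integrability remark, characteristic-function argument for odd cumulants), but there is no substantive difference in method.
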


\begin{proof}
First, note that $\EE[Y^k]=0$ whenever $k\geq 1$ is an odd integer since $\Gamma_s(\lambda,r)$ is a symmetric distribution. Next, choosing $f(x)=x$ in \eqref{steinchgamma} we obtain $\EE[Y^2]= \frac{2r}{\lambda}$. Choosing $f(x) = x^3$ in  \eqref{steinchgamma} we get
$\frac{6(1+r)}{\lambda^2} \EE[Y^2] = \EE [Y^4]$ and with the choice $f(x)=x^5$ we obtain from  \eqref{steinchgamma} that
$\frac{10 r + 20}{\lambda^2} \EE [Y^4] = \EE [Y^6]$. The formulas for the cumulants follow from the relation between moments and cumulants stated in Section \ref{sec:Malliavin}.
\end{proof}

\paragraph{Variance-Gamma distributions.}
A random variable $Y$ is said to have a Variance-Gamma distribution with parameters $r>0$, $\theta \in \RR$, $\sigma >0$ and $\mu \in \RR$ if and only if its Lebesgue-density $p(x; r, \theta, \sigma, \mu)$, $x\in\RR$, equals
$$
p(x; r, \theta, \sigma, \mu) = \frac{1}{\sigma \sqrt{\pi} \Gamma(r/2)} \exp \biggl( \frac{\theta}{\sigma^2} (x-\mu) \biggr) \biggl( \frac{|x - \mu|}{2 \sqrt{\theta^2 + \sigma^2}} \biggr)^{\frac{r-1}{2}} K_{\frac{r-1}{2}} \biggl( \frac{\sqrt{\theta^2 + \sigma^2}}{\sigma^2} |x-\mu | \biggr).
$$
Here, $K_{\nu}(x)$ denotes a modified Bessel function of the second kind (see \cite[Appendix B]{Gaunt:2013} and references there). In what follows we write $VG(r,\theta, \sigma, \mu)$ for such a Variance-Gamma distribution. It is known that $\EE[Y] = \mu + r \theta$ and $\VV[Y] = r(\sigma^2 + 2 \theta^2)$. We will mostly consider only the centred case $\mu=0$ and write $VG_c(r,\theta, \sigma)$ for $VG(r,\theta,\sigma,0)$. Note that the symmetrized Gamma distribution considered in the previous paragraph corresponds to $VG(2r, 0, 1 / \lambda,0)$. Variance-Gamma distributions are widely used in finance modelling and contain as special or limiting cases the normal, Gamma  or Laplace distribution. In particular, for certain parameter values, the Variance-Gamma distribution has semi-heavy tails that decay slower than those of the normal distribution, see \cite{Gaunt:thesis,Gaunt:2013}. 

\begin{figure}[t]
\begin{center}
\includegraphics[width=0.45\columnwidth,clip, bb=70 320 525 785]{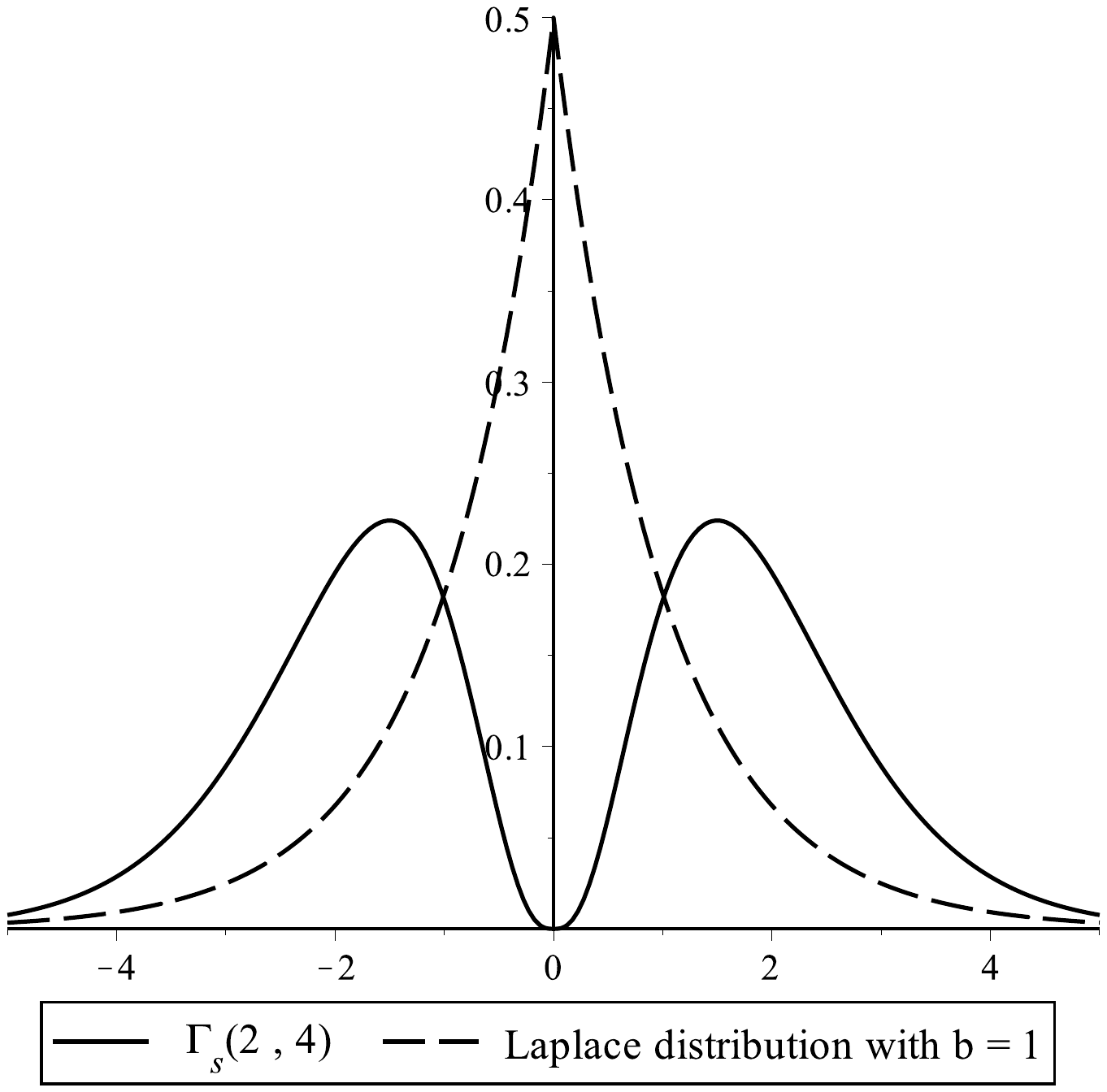}
\includegraphics[width=0.45\columnwidth,clip, bb=70 320 525 785]{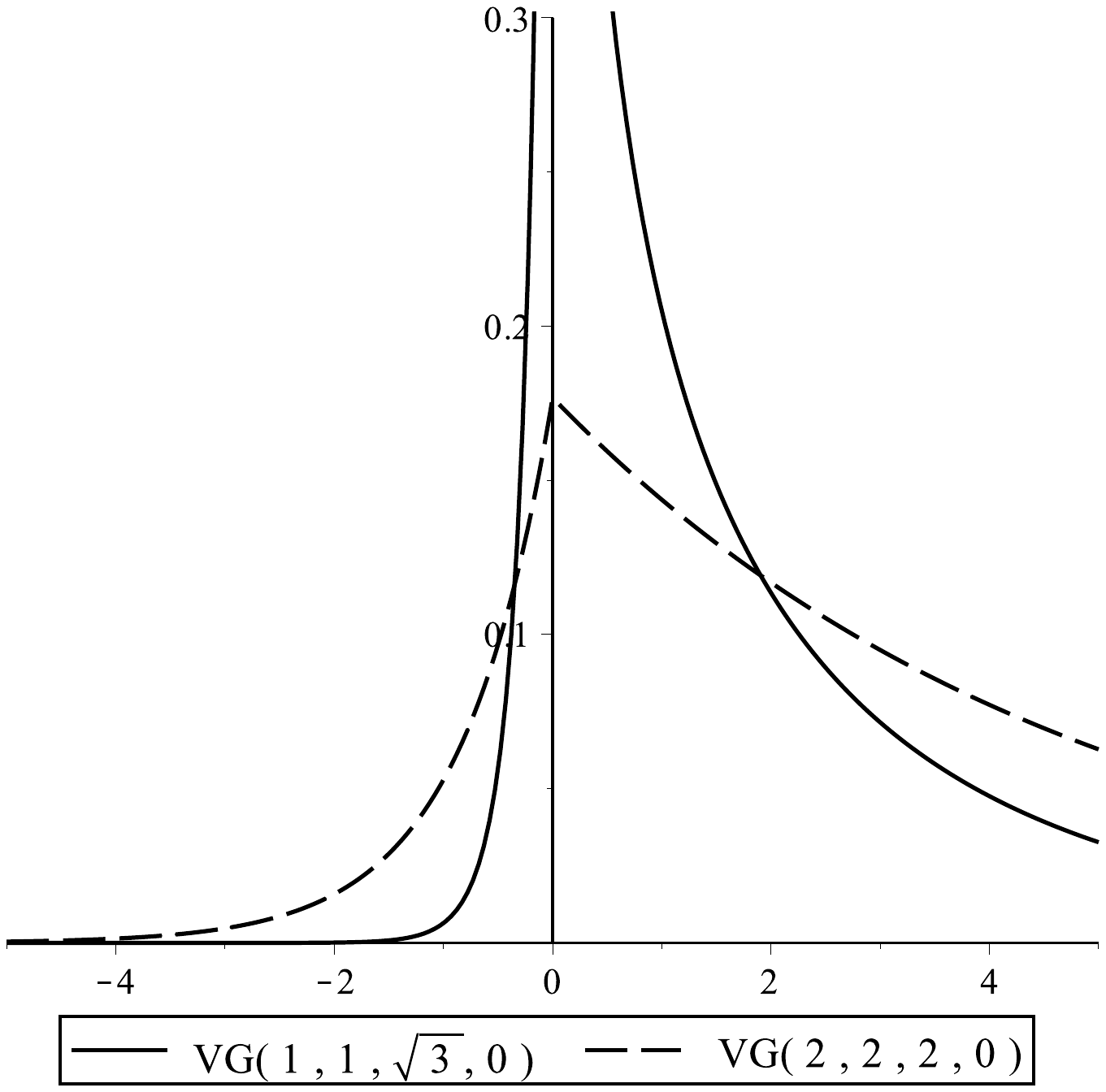}
\caption{Densities of Variance-Gamma distributions}
\label{fig:densities}
\end{center}
\end{figure}

The parameter $r$ is known to be the {\it scale} parameter. As $r$ increases, the distribution becomes more rounded around its peak value.
The parameter $\sigma$ is called the {\it tail} parameter. As $\sigma$ decreases, the tails drop off more steeply. Finally, the parameter $\theta$ is the {\it asymmetry}-parameter, for non-zero $\theta$ the distribution becomes skewed, that is, asymmetric, see Figure \ref{fig:densities}. In \cite{Gaunt:2013} a Stein equation for the $VG(r,\theta, \sigma, \mu)$-distribution was established.
From this, the Stein equation for the $VG_c(r,\theta, \sigma)$ distribution follows:
\begin{equation} \label{steinVGc}
\sigma^2 ( x + r \theta)  f''(x) + (\sigma^2 r + 2 \theta (x + r \theta)) f'(x) -x f(x) = h(x) - VG_c(r,\theta, \sigma)(h),
\end{equation}
where $VG_c(r,\theta, \sigma)(h)$ stands for the integral over $\RR$ of $h$ with respect to the $VG_c(r,\theta, \sigma)$ distribution.
The next lemma presents bounds for the solution $f_h$ of \eqref{steinVGc} and its first and second derivative. It is interesting to note that in contrast to the case $\theta=0$, uniform bounds are much harder to obtain if $\theta\neq 0$. In a first step these bounds can be expressed in terms of expressions involving modified Bessel functions, see Lemma 3.17 in \cite{Gaunt:thesis}. The following lemma follows from this representation.

\begin{lemma}\label{boundsVGc}
Suppose that $h \in C_b^1(\RR)$, and $r>0$, $\theta \in \RR$, $\sigma >0$, then the solution $f_h$ of the Stein equation \eqref{steinVGc} and
its derivatives up to order two are bounded, that is, there exists a constant $C=C(r,\theta,\sigma)$
such that 
\begin{equation}
\|f_h\|_{\infty}\leq C\|h\|_{\infty}, \quad
\|f_h^{(k)}\|_{\infty} \leq C\sum_{i=1}^{k-1}\|h^{(i)}\|_{\infty},\quad  k=1,2.
\end{equation}
\end{lemma}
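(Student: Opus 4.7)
My plan is to start from the explicit integral representation of $f_h$ given in Lemma 3.17 of \cite{Gaunt:thesis}, which was obtained by solving the Stein ODE \eqref{steinVGc} via variation of parameters. The two linearly independent homogeneous solutions of \eqref{steinVGc} are known to be expressible through modified Bessel functions $K_{(r-1)/2}$ and $I_{(r-1)/2}$ (of the same index appearing in the $VG_c(r,\theta,\sigma)$ density). From these, one constructs a Green's function $G(x,y)$ satisfying the appropriate decay conditions at $\pm\infty$ and writes
\[
f_h(x) \;=\; \int_{\RR} G(x,y)\,\bigl(h(y) - VG_c(r,\theta,\sigma)(h)\bigr)\,\dint y.
\]
The first step is then to record, from Gaunt's thesis, the precise form of $G(x,y)$ and of the kernels one obtains by differentiating once and twice in $x$.

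The second step is to bound these kernels uniformly in $x$, using the classical asymptotics of $K_\nu$ and $I_\nu$: exponential decay $K_\nu(z)\sim\sqrt{\pi/(2z)}e^{-z}$ as $z\to\infty$, matched by exponential growth of $I_\nu$, together with the known behavior at $0$ (power singularity of order $|\nu|$ for $\nu\neq0$, logarithmic for $\nu=0$). Because these singularities are integrable in $y$, one obtains $\sup_x\int_\RR|G(x,y)|\dint y\leq C$ and a corresponding bound for $\partial_x G$. This immediately yields $\|f_h\|_\infty\leq C\|h\|_\infty$ and $\|f_h'\|_\infty\leq C\|h\|_\infty$ after using that $\|h-VG_c(r,\theta,\sigma)(h)\|_\infty\leq 2\|h\|_\infty$.

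The third step, which is the main obstacle, is the estimate for $f_h''$. Differentiating the representation twice in $x$ produces a kernel whose singularity along the diagonal $y=x$ is no longer integrable and whose pointwise magnitude is not controlled by $\|h\|_\infty$ alone; moreover, the coefficient $\sigma^2(x+r\theta)$ of $f''$ in the Stein equation \eqref{steinVGc} vanishes at $x=-r\theta$, so one cannot simply solve algebraically for $f_h''$ from the equation. The remedy is to integrate by parts in the representation of $f_h''$, transferring one derivative from the kernel onto $h$, so that
\[
f_h''(x) \;=\; \int_\RR \partial_x^{\,2}G(x,y)\bigl(h(y) - VG_c(r,\theta,\sigma)(h)\bigr)\dint y
        \;=\; \int_\RR K_1(x,y)\,h'(y)\,\dint y \;+\; \int_\RR K_2(x,y)\bigl(h(y)-VG_c(r,\theta,\sigma)(h)\bigr)\dint y,
\]
where $K_1$ and $K_2$ are now integrable in $y$ uniformly in $x$. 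Applying the Bessel asymptotics once more to $\sup_x\int|K_i(x,y)|\dint y$ yields $\|f_h''\|_\infty\leq C(\|h\|_\infty+\|h'\|_\infty)$. The separation of behaviour near $x=-r\theta$ from behaviour in the exponentially decaying regime must be done by splitting the domain of integration; this is the technically delicate point, and it is precisely where Gaunt's Bessel-function identities (recurrences and differentiation formulas, e.g.\ $K_\nu'(z)=-\tfrac12(K_{\nu-1}(z)+K_{\nu+1}(z))$) are used to rewrite the derivatives of $G$ in a form amenable to the asymptotic estimates.

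Combining the three bounds gives the claimed estimates with a constant $C=C(r,\theta,\sigma)$, completing the proof.
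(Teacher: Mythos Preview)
Your approach is essentially the same as the paper's: the paper does not give a self-contained proof of this lemma but simply states that the bounds follow from the modified-Bessel-function representation in Lemma~3.17 of \cite{Gaunt:thesis}, which is exactly your starting point. Your sketch correctly identifies the main technical point---that the second-derivative bound requires an integration by parts (equivalently, a use of the Bessel recurrence/differentiation identities) because the coefficient of $f''$ in \eqref{steinVGc} vanishes at $x=-r\theta$---and this is precisely what underlies Gaunt's argument.
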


\begin{remark}\rm
In contrast to the symmetric case discussed above, if $\theta\neq 0$, it seems rather difficult to express the constant $C$ appearing in Lemma \ref{boundsVGc} explicitly in terms of the parameters $r,\theta$ and $\sigma$. 
\end{remark}

With the same proof as for Lemma \ref{gammamoments} we can compute the first six moments or cumulants of a centred Variance-Gamma random variable, which will be needed later.

\begin{lemma} \label{VGcmoments}
If $Y$ is distributed according to $VG_c(r,\theta, \sigma)$, we obtain
$$
\EE[Y]=0,\quad\EE[Y^2]= r(\sigma^2 + 2 \theta^2), \quad \EE[Y^3]= 2  r \theta \sigma^2 + 4 \theta \, \EE[Y^2],
$$
$$
\EE[Y^4]=  \bigl( 3\sigma^2 (2+r) + 6 r \theta^2 \bigr) \, \EE[Y^2] + 6 \theta \, \EE[Y^3],
$$
$$
\EE[Y^5] = 12 r \theta  \sigma^2 \, \EE[Y^2] + \bigl( 8 r \theta^2 + 4r \sigma^2 + 12 \sigma^2 \bigr) \, \EE[Y^3] + 8 \theta \, \EE[Y^4],
$$
$$
\EE[Y^6]= 20 r \theta \sigma^2 \, \EE[Y^3] + \bigl(5\sigma^2(4+r) +10 r \theta^2 \bigr) \, \EE[Y^4] + 10 \, \theta \, \EE[Y^5].
$$
Moreover, the first six cumulants of $Y$ are $\kappa_1(Y)=0$ and
$$
\kappa_2(Y)=r(\sigma^2+2\theta^2),\quad\kappa_3(Y)=2r\theta(3\sigma^2+4\theta^2),\quad\kappa_4(Y)=6r(\sigma^4+8\sigma^2\theta^2+8\theta^4),
$$
$$
\kappa_5(Y)=24r\theta(5\sigma^4+20\sigma^2\theta^2+16\theta^4),\quad\kappa_6(Y)=120r(\sigma^2+2\theta^2)(\sigma^4+16\sigma^2\theta^2+16\theta^4).
$$
\end{lemma}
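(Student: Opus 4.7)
The plan is to follow exactly the blueprint of Lemma \ref{gammamoments}. The Stein equation \eqref{steinVGc} encodes a Stein-type characterization: for $Y\sim VG_c(r,\theta,\sigma)$ and any sufficiently regular $f$ one has
\begin{equation}\label{eq:VGchar-plan}
\EE\bigl[\sigma^2(Y+r\theta)f''(Y) + (\sigma^2 r + 2\theta(Y+r\theta))f'(Y) - Yf(Y)\bigr]=0,
\end{equation}
obtained by applying \eqref{steinVGc} to the solution $f_h$ associated with $h(x) = x$ or, more directly, by integration by parts against the density displayed after the paragraph heading. Since the $VG_c(r,\theta,\sigma)$ density exhibits at most semi-heavy (exponential times polynomial) tails, $Y$ has finite moments of all orders, so every polynomial $f$ is admissible in \eqref{eq:VGchar-plan}.

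The first step is to successively insert the monomials $f(x)=1,x,x^2,x^3,x^4,x^5$ into \eqref{eq:VGchar-plan}. Taking $f\equiv 1$ gives $\EE[Y]=0$. Taking $f(x)=x$, the identity reduces to $\sigma^2 r + 2r\theta^2 + 2\theta\,\EE[Y] - \EE[Y^2]=0$, producing $\EE[Y^2]=r(\sigma^2+2\theta^2)$. The remaining choices $f(x)=x^k$ for $k=2,3,4,5$ yield linear recursions that express $\EE[Y^{k+1}]$ in terms of $\EE[Y^{\leq k}]$; collecting terms according to the powers of $Y$ appearing from $(Y+r\theta)f''(Y)$ and $(\sigma^2 r+2\theta(Y+r\theta))f'(Y)$ delivers precisely the four recursions stated for $\EE[Y^3],\EE[Y^4],\EE[Y^5],\EE[Y^6]$.

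The second step is to pass from moments to cumulants. This is pure algebra via the standard moment--cumulant inversion, the centred versions of which (for orders $2$, $3$, $4$, $6$) were already recorded in Section \ref{sec:Malliavin}, and for order $5$ one uses $\kappa_5(Y)=\EE[Y^5]-10\,\EE[Y^3]\EE[Y^2]$ (valid because $\EE[Y]=0$). Substituting the moment formulas just derived and simplifying gives the five displayed cumulant expressions.

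The only real work is bookkeeping: the recursion for $\EE[Y^6]$ and the conversion $\kappa_6(Y)=\EE[Y^6]-15\EE[Y^4]\EE[Y^2]-10(\EE[Y^3])^2+30(\EE[Y^2])^3$ each involve several terms that must be collected carefully, and the simplification of $\kappa_6(Y)$ into the factorised form $120r(\sigma^2+2\theta^2)(\sigma^4+16\sigma^2\theta^2+16\theta^4)$ requires a small amount of algebraic rearrangement. There is no conceptual difficulty, and in particular no new analytic input beyond the Stein characterization \eqref{eq:VGchar-plan}, so the lemma is essentially a direct calculation.
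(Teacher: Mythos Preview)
Your proposal is correct and follows essentially the same approach as the paper, which explicitly states that the lemma is proved ``with the same proof as for Lemma \ref{gammamoments}'': one plugs monomials $f(x)=x^k$ into the Stein characterization underlying \eqref{steinVGc} to obtain the moment recursions, and then converts to cumulants via the standard relations recalled in Section~\ref{sec:Malliavin}. There is nothing to add.
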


Let us collect some distributions, which are of particular interest and belong to the class of Variance-Gamma distributions, see \cite[Proposition 1.2]{Gaunt:2013}:
\begin{itemize}
\item A $VG_c(2r, 0, 1 / \lambda)$-distributed random variable has the symmetrized Gamma distribution, in particular $VG_c(2, 0, b)$ corresponds to a Laplace distribution with parameter $b$.
\item Suppose that $(X,Y)$ has the bivariate normal distribution with correlation $\varrho$ and marginals $X \sim \cN(0, \sigma_X^2)$ and 
$Y \sim \cN(0, \sigma_Y^2)$. Then the product $X \, Y$ follows the $VG_c(1, \varrho   \sigma_X  \sigma_Y,  \sigma_X  \sigma_Y \sqrt{1 - \varrho^2})$-distribution.
\item Suppose that $(X,Y)$ has the bivariate gamma distribution with correlation $\varrho$ and marginals $X \sim \Gamma(\lambda_1,r)$ and 
$Y \sim \Gamma(\lambda_2,r)$. Then the random variable $X - Y$ follows the $VG_c(2r, (2 \lambda_1)^{-1} -(2 \lambda_2)^{-1} ,  (\lambda_1 \lambda_2)^{-1/2} \sqrt{1 - \varrho^2})$-distribution.
\end{itemize}

\section{A Malliavin-Stein bound for the Wasserstein distance}\label{sec:GeneralBound}

Our first result provides explicit bounds for the $VG_c(r,\theta, \sigma)$-approximation of general functionals of an isonormal Gaussian process $X$.
Recall the definition of the $\Gamma$-operators $\Gamma_j(F)$ given in \eqref{gammadef}.

\begin{theorem} \label{result1}
Let $F \in \DD^{2,4}$ be such that $\EE[F]=0$ and let $Y$ be $VG_c(r,\theta, \sigma)$-distributed random variable. Then there exist constants $C_1=C_1(r,\theta, \sigma)>0$ and $C_2=C_2(r,\theta, \sigma) >0$ such that
\begin{equation} \label{r1}
d_W ( F, Y) \leq  C_1\EE \big[\big| \sigma^2(F + r\theta) + 2 \theta \, \Gamma_2(F) - \Gamma_3(F) \big|\big] + C_2 \big| r \sigma^2  + 2 r \theta^2 - \EE[\Gamma_2(F)] \big|.
\end{equation}
If in addition $F \in \DD^{3, 8}$, then $\Gamma_3(F)$ is square-integrable and 
\begin{equation} \label{r2}
\EE \big[ \big|  \sigma^2(F + r\theta) + 2 \theta \, \Gamma_2(F) - \Gamma_3(F)\big|\big] \leq \Big( \EE \big[\big( \sigma^2(F + r\theta) + 2 \theta \, \Gamma_2(F) - \Gamma_3(F) \big)^2 \big]\Big)^{1/2}.
\end{equation}
\end{theorem}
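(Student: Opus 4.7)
The plan is to combine Stein's method for the $VG_c(r,\theta,\sigma)$-distribution with Malliavin integration by parts. For each Lipschitz test function $h\in\cL$, let $f=f_h$ denote the solution of the Stein equation \eqref{steinVGc}. Substituting $F$ into \eqref{steinVGc} and taking expectations yields the basic identity
\begin{equation*}
\EE[h(F)] - \EE[h(Y)] = \sigma^2\EE\bigl[(F+r\theta)f''(F)\bigr] + \EE\bigl[(\sigma^2 r + 2\theta(F+r\theta))f'(F)\bigr] - \EE[Ff(F)],
\end{equation*}
and the heart of the proof is to eliminate the terms $\EE[Ff(F)]$ and $\EE[Ff'(F)]$ via Malliavin integration by parts, leaving the contribution $\sigma^2\EE[Ff''(F)]$ untouched so as to avoid introducing an $f^{(3)}$.

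Concretely, I would perform three applications of Lemma \ref{keyid}(1). First, with $H=f(F)$ and $G=F$, the chain rule \eqref{chainrule} together with the identification $\Gamma_2(F)=\langle DF,-DL^{-1}F\rangle_{\EH}$ yields $\EE[Ff(F)]=\EE[f'(F)\Gamma_2(F)]$ (recall $\EE[F]=0$). Second, with $H=f'(F)$ and $G=\Gamma_2(F)$, the definition \eqref{gammadef} of $\Gamma_3$ produces
\begin{equation*}
\EE[f'(F)\Gamma_2(F)] = \EE[f'(F)]\,\EE[\Gamma_2(F)] + \EE[f''(F)\Gamma_3(F)].
\end{equation*}
Third, with $H=f'(F)$ and $G=F$ one gets $\EE[Ff'(F)]=\EE[f''(F)\Gamma_2(F)]$. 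Plugging these three identities into the basic identity and regrouping the terms according to whether they multiply $f'(F)$ or $f''(F)$, the full random $F$-dependence is absorbed inside the $f''$-factor and the expression collapses to
\begin{equation*}
\EE[h(F)] - \EE[h(Y)] = \EE\bigl[f''(F)\bigl(\sigma^2(F+r\theta) + 2\theta\,\Gamma_2(F) - \Gamma_3(F)\bigr)\bigr] + \bigl(r\sigma^2 + 2r\theta^2 - \EE[\Gamma_2(F)]\bigr)\EE[f'(F)].
\end{equation*}
Taking absolute values, applying the triangle inequality together with the uniform bounds $\|f'\|_\infty,\|f''\|_\infty\leq C(r,\theta,\sigma)$ from Lemma \ref{boundsVGc}, and finally taking the supremum over $h\in\cL$ delivers \eqref{r1} with constants $C_1,C_2$ depending only on the parameters $r,\theta,\sigma$. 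For the second inequality \eqref{r2} a direct application of the Cauchy--Schwarz inequality suffices, once it has been checked that $\Gamma_3(F)\in L^2(\Omega)$; under the stronger assumption $F\in\DD^{3,8}$ this follows from $\|\Gamma_3(F)\|_{L^2}\leq\|DF\|_{L^4(\Omega,\EH)}\|DL^{-1}\Gamma_2(F)\|_{L^4(\Omega,\EH)}$ combined with Meyer-type inequalities that control the second factor by a Malliavin--Sobolev norm of $F$.

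The main obstacle I anticipate is twofold. First, one must rigorously justify the integration-by-parts steps: $f(F),f'(F)\in\DD^{1,2}$ is ensured by $F\in\DD^{1,2}$, the chain rule, and the boundedness of $f'$ and $f''$, while $\Gamma_2(F)\in L^2(\Omega)$ follows from $F\in\DD^{1,4}\subseteq\DD^{2,4}$ by Cauchy--Schwarz on the defining inner product. Second, and more delicate, Lemma \ref{boundsVGc} is stated for $h\in C_b^1(\RR)$ whereas Wasserstein test functions are only Lipschitz and may be unbounded; a standard truncation or mollification argument should reduce to the bounded case, but since the implicit constant $C(r,\theta,\sigma)$ in Lemma \ref{boundsVGc} is not explicit in the unsymmetric regime $\theta\neq 0$, the passage to the limit has to be executed with some care to keep the resulting $C_1,C_2$ finite and independent of $F$.
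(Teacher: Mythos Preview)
Your proof is correct and follows essentially the same route as the paper: the same three applications of Lemma \ref{keyid}(1) lead to the same key identity, after which the bounds on $f'$ and $f''$ yield \eqref{r1}, and Cauchy--Schwarz gives \eqref{r2}. The only minor differences are that for the square-integrability of $\Gamma_3(F)$ under $F\in\DD^{3,8}$ the paper simply cites \cite[Lemma 4.2(2)]{NourdinPeccati:2010} (which gives $\Gamma_3(F)\in\DD^{1,2}$) rather than invoking Meyer inequalities, and that the paper, like you, does not explicitly carry out the mollification from $C_b^1$ to Lipschitz test functions---your flagging of this point is appropriate.
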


\begin{proof}
Let $f:\RR\to\RR$ be a twice differentiable function with bounded second derivative. 
Let $H=f(F)$ and put $G=F$. Then by our assumptions $H \in \DD^{1,2}$, using the chain rule \eqref{chainrule} and $G \in L^2(\Omega)$. Hence, by Lemma \ref{keyid} (1) we have that $$\EE [F f(F)] = \EE [f'(F) \Gamma_2(F)].$$ Similarly, let now $H=f'(F)$ and $G=F$, then by our assumptions $H \in \DD^{1,2}$, using the chain rule \eqref{chainrule}, and $G \in L^2(\Omega)$, which again 
by Lemma \ref{keyid} (1) leads to $$\EE [F f'(F)] = \EE [f''(F) \Gamma_2(F)].$$
Next we will apply Lemma \ref{keyid} (1) with $H=f'(F)$ and $G = \Gamma_2(F)$.
Again with \eqref{chainrule}, we have  that $f'(F) \in \DD^{1,2}$ and that $\Gamma_2(F)$ is square-integrable using $F \in \DD^{2,4} \subset \DD^{1,4}$ (for a detailed argument see \cite[Proof of Proposition 5.1.1]{NP:book}). Since $F \in \DD^{2,4}$, we have $\Gamma_3(F) \in L^1(\Omega)$ (see \cite[Lemma 4.2]{NourdinPeccati:2010}), whence
\begin{equation} \label{zwischenschritt}
 \EE [F f(F)] = \EE[f'(F)] \EE [\Gamma_2(F)] + \EE [f''(F) \Gamma_3(F)].
\end{equation}
Summarizing, we arrive at the identity
 \begin{eqnarray*}
 \EE \big[\sigma^2(F + r\theta) f''(F) + (\sigma^2 r + 2 r \theta^2)  f'(F) + 2 \theta F f'(F) - Ff(F) \big]  \\ 
 && \hspace{-9cm}
 =\EE \big[ f''(F) \big( \sigma^2(F + r\theta) + 2 \theta \, \Gamma_2(F) - \Gamma_3(F) \big) + f'(F) 
 \big( (r \sigma^2  + 2 r \theta^2) - \EE[\Gamma_2(F)] \big) \big]
 \end{eqnarray*}
and relation \eqref{r1} can be deduced from the bounds in Lemma \ref{bounds}. Relation \eqref{r2} is a consequence of \cite[Lemma 4.2(2)]{NourdinPeccati:2010}. Namely, with $F \in \DD^{3, 8}$ one has $\Gamma_3(F) \in \DD^{1,2}$.
 \end{proof}

\begin{remark}\rm
For a $VG_c(r,\theta, \sigma)$-distributed random variable $Y$ we know from Lemma \ref{VGcmoments} that  $\EE[Y^2] =r(\sigma^2 + 2 \theta^2) $. Since $\EE[\Gamma_2(F)] = \VV[F]=\EE[F^2]$, the second term in our bound \eqref{r1} measures the distance between the variances of $Y$ and $F$.  The interpretation of
 the $L^2$-distance of $\sigma^2(F + r\theta) + 2 \theta \, \Gamma_2(F)$ and the $\Gamma_3(F)$-term on the right-hand side of \eqref{r2} is not obvious and will be discussed for $F\in\cH_q$ being in the $q$th Wiener chaos in Section \ref{sec:multipleintegrals} below.
 \end{remark}
 
 We will now derive two consequences from Theorem \ref{result1}. The first one deals with two special Variance-Gamma distributions, the symmetric Gamma distribution $\Gamma_s(\lambda,r)$ and the distribution of $X-Y$ of two random variables $X$ and $Y$ having a $\Gamma_s(\lambda_1,r)$- and $\Gamma_s(\lambda_2,r)$-distribution, respectively.
 
\begin{corollary} \label{result1b}
Let $F \in \DD^{2,4}$ be such that $\EE[F]=0$. 
\begin{itemize}
\item[(a)] Let $Y$ be a $VG_c(2r, 0, 1 / \lambda)=\Gamma_s(\lambda,r)$-distributed random variable for some $\lambda,r>0$. Then 
\begin{equation*} 
d_W ( F, Y ) \leq C_1\Big( \EE\Big[ \Big( \frac{1}{\lambda^2} F - \Gamma_3(F) \Big)^2\Big] \Big)^{1/2} + C_2\Big| \frac{2r}{\lambda^2} - \EE[\Gamma_2(F)] \Big|
\end{equation*}
with constants $C_1,C_2>0$ only depending on $\lambda$ and $r$.
\item[(b)] Fix $r,\lambda_1,\lambda_2,\varrho>0$ and let $Z$ denote a real-valued random variable with a $VC_c(2r, (2 \lambda_1)^{-1} -(2 \lambda_2)^{-1} ,  (\lambda_1 \lambda_2)^{-1/2} \sqrt{1 - \varrho^2})$-distribution. Then
\begin{eqnarray*} 
d_W ( F, Z ) & \leq & C_1 \Big( \EE \Big[\Big( \frac{1 - \varrho^2}{\lambda_1 \lambda_2} \Big( F + r \Big( \frac{1}{\lambda_1} - \frac{1}{\lambda_2}\Big) \Big) + \Big( \frac{1}{\lambda_1} - \frac{1}{\lambda_2}\Big) \Gamma_2(F) - \Gamma_3(F) \bigg)^2\, \Big] \Big)^{1/2}\\
&  &\qquad + C_2 \big| \EE[Z] - \EE[\Gamma_2(F)] \big|
\end{eqnarray*}
with constants $C_1,C_2>0$ depending only on $r,\lambda_1,\lambda_2$ and $\varrho$.
\end{itemize}
\end{corollary}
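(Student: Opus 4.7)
The plan is simply to specialise Theorem \ref{result1} to the two concrete choices of parameters listed in the statement, and then to simplify the resulting expressions. Since Theorem \ref{result1} already provides the abstract bound
\[
d_W(F,Y)\leq C_1\Big(\EE\big[\big(\sigma^2(F+r\theta)+2\theta\,\Gamma_2(F)-\Gamma_3(F)\big)^2\big]\Big)^{1/2}+C_2\big|r\sigma^2+2r\theta^2-\EE[\Gamma_2(F)]\big|
\]
for any centred $F\in\DD^{3,8}$ (combining \eqref{r1} with \eqref{r2}), no further Malliavin or Stein input is needed here; the work is bookkeeping for the two parameter choices. The hypothesis $F\in\DD^{2,4}$ stated in the corollary will, strictly speaking, give the weaker $L^1$-version of the bound from \eqref{r1}, which is sufficient to obtain the claimed inequalities with $\EE[(\cdot)^2]^{1/2}$ replaced by $\EE[|\cdot|]$; the $L^2$-form stated in (a) and (b) follows under the additional regularity $F\in\DD^{3,8}$ ensuring square-integrability of $\Gamma_3(F)$ via \cite[Lemma 4.2(2)]{NourdinPeccati:2010}. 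I will indicate this in one sentence and otherwise present the bound in the form stated.

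For part (a), one substitutes the parameter triple $(r,\theta,\sigma)\mapsto(2r,\,0,\,1/\lambda)$ of the $VG_c$-family into the general bound. The asymmetry term vanishes because $\theta=0$, so
\[
\sigma^2(F+r\theta)+2\theta\,\Gamma_2(F)-\Gamma_3(F)=\frac{1}{\lambda^2}F-\Gamma_3(F),
\]
while the variance matching term reduces to $r\sigma^2+2r\theta^2=\tfrac{2r}{\lambda^2}$, giving
\[
\big|r\sigma^2+2r\theta^2-\EE[\Gamma_2(F)]\big|=\big|\tfrac{2r}{\lambda^2}-\EE[\Gamma_2(F)]\big|,
\]
exactly as claimed. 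The constants $C_1,C_2$ inherited from Theorem \ref{result1} depend on $r,\theta,\sigma$ and hence, in this setting, only on $r$ and $\lambda$.

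For part (b), I plug in the triple $(r,\theta,\sigma)\mapsto\big(2r,\,\tfrac{1}{2\lambda_1}-\tfrac{1}{2\lambda_2},\,(\lambda_1\lambda_2)^{-1/2}\sqrt{1-\varrho^2}\big)$. Then $\sigma^2=\tfrac{1-\varrho^2}{\lambda_1\lambda_2}$ and $r\theta=r\big(\tfrac{1}{\lambda_1}-\tfrac{1}{\lambda_2}\big)/1$ (using that the first parameter is $2r$ and $\theta=\tfrac12(\tfrac{1}{\lambda_1}-\tfrac{1}{\lambda_2})$, so $2r\cdot\theta=r(\tfrac{1}{\lambda_1}-\tfrac{1}{\lambda_2})$), whence
\[
\sigma^2(F+r\theta)+2\theta\,\Gamma_2(F)-\Gamma_3(F)=\frac{1-\varrho^2}{\lambda_1\lambda_2}\Big(F+r\Big(\frac{1}{\lambda_1}-\frac{1}{\lambda_2}\Big)\Big)+\Big(\frac{1}{\lambda_1}-\frac{1}{\lambda_2}\Big)\Gamma_2(F)-\Gamma_3(F).
\]
Invoking Lemma \ref{VGcmoments} to identify $\EE[Z^2]=2r(\sigma^2+2\theta^2)=r\sigma^2+2r\theta^2$ (with the convention that $\EE[\Gamma_2(F)]=\VV[F]$), the variance term becomes $|\EE[Z^2]-\EE[\Gamma_2(F)]|$; this is what the statement calls $|\EE[Z]-\EE[\Gamma_2(F)]|$ in shorthand. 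The new constants $C_1,C_2$ again come from Theorem \ref{result1} and therefore depend only on $r,\lambda_1,\lambda_2,\varrho$.

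There is no real obstacle: the only mildly delicate point is keeping track of the substitution $r\mapsto 2r$ in the first parameter when going from $VG_c(r,\theta,\sigma)$ to the symmetrised Gamma (respectively difference-of-Gammas) parameterisation, so that all occurrences of $r$ inside the factors $r\theta$, $r\sigma^2$ and $2r\theta^2$ are multiplied by $2$ consistently. Once that is done carefully, the two corollaries fall out directly from Theorem \ref{result1} and Lemma \ref{VGcmoments}.
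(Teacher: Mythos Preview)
Your approach is correct and matches the paper's: the corollary is stated without proof, implicitly as a direct specialisation of Theorem \ref{result1} to the two parameter triples, and your bookkeeping (including the careful $r\mapsto 2r$ substitution) is exactly what is needed. Your observation about the $\DD^{2,4}$ versus $\DD^{3,8}$ regularity discrepancy between the corollary's hypothesis and its $L^2$-form conclusion is also legitimate and not addressed in the paper.

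One small slip to fix: in part (b) you write $\EE[Z^2]=2r(\sigma^2+2\theta^2)=r\sigma^2+2r\theta^2$, but the last equality is false ($2r(\sigma^2+2\theta^2)=2r\sigma^2+4r\theta^2$). The correct chain is: the variance term in Theorem \ref{result1} with $r\mapsto 2r$ reads $|2r\sigma^2+4r\theta^2-\EE[\Gamma_2(F)]|=|2r(\sigma^2+2\theta^2)-\EE[\Gamma_2(F)]|$, and by Lemma \ref{VGcmoments} this equals $|\EE[Z^2]-\EE[\Gamma_2(F)]|$. Your identification of the paper's ``$\EE[Z]$'' as standing for $\EE[Z^2]$ (or $\VV[Z]$) is right; it is indeed a typo in the statement, since $\EE[Z]=0$.
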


Our next result deals with two limiting cases of Variance-Gamma distributions, namely the normal and the (non-symmetrized) Gamma distribution. As discussed in the introduction, this has previously been considered in \cite{NourdinPeccati:2009}. More precisely, Theorems 3.1 and 3.11 there show that if $F\in\DD^{1,2}$ is a centred functional of an isonormal Gaussian process and if $Z\sim \cN(0,\sigma^2)$ for some $\sigma^2>0$ and $Y\sim\Gamma(\lambda,r)$ for some $\lambda,r>0$ that
\begin{align*}
d_W(F,Z) \leq \big(\EE[(\sigma^2-\Gamma_2(F))^2]\big)^{1/2}\quad\text{and}\quad d_W(F,Y)\leq C\, \Big(\EE\Big[\Big({1\over\lambda}F+{r\over\lambda^2}-\Gamma_2(F)\Big)^2\,\Big]\Big)^2
\end{align*}
with a constant $C>0$ only depending on $r$ and $\lambda$. In our context, we can derive another bound for $d_W(F,Z)$ and $d_W(F,Y)$ in terms of the Gamma-operator $\Gamma_3$. We will see below that in the case of multiple stochastic integrals this is closely related to some of the results recently derived in \cite{AzmoodehEtAl:2013}.

\begin{corollary} \label{result1c}
Let $F \in \DD^{2,4}$ be such that $\EE[F]=0$.
\begin{itemize}
\item[(a)] Let $Z$ denote a centred Gaussian random variable with variance $\sigma^2>0$. Then there exist  constants $C_1,C_2>0$ only depending on $\sigma$ such that
\begin{equation} \label{r1c}
d_W ( F, Z) \leq  C_1 \EE\big[ \big| \Gamma_3(F) \big|\big] + C_2  \big| \sigma^2   - \EE[\Gamma_2(F)] \big|.
\end{equation}
\item[(b)] Let $Y$ be a $\Gamma(\lambda,r)$-distributed random variable with parameters $\lambda >0$ and $r>0$. Then there exist constants $ C_1,C_2>0$ depending only on $r$ and $\lambda$ such that
\begin{equation} \label{r2c}
d_W ( F, \Gamma(\lambda,r) ) \leq  C_1\EE \Big[\Big| \frac{1}{\lambda} \Gamma_2(F) - \Gamma_3(F) \Big|\Big] + C_2  \Big| \frac{r}{\lambda^2}   - \EE[\Gamma_2(F)] \Big|.
\end{equation}
\end{itemize}
\end{corollary}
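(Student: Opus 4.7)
The plan is to mirror the argument that establishes Theorem~\ref{result1}, now adapted to the Stein equations for the normal and (centred) Gamma distributions. The crucial input in both cases is the refined identity \eqref{zwischenschritt}, namely
\[
\EE[F f(F)] \,=\, \EE[f'(F)]\,\EE[\Gamma_2(F)] \,+\, \EE[f''(F)\,\Gamma_3(F)],
\]
together with the simpler identity $\EE[F f'(F)] = \EE[f''(F)\,\Gamma_2(F)]$, both of which follow from Lemma~\ref{keyid}(1) under the assumption $F\in\DD^{2,4}$.

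For part (a), I would start from the classical Stein equation $\sigma^2 f'(x) - xf(x) = h(x) - \EE[h(Z)]$ associated with $\cN(0,\sigma^2)$. Replacing $x$ by $F$, taking expectations and inserting \eqref{zwischenschritt} yields
\[
\EE[h(F)] - \EE[h(Z)] \,=\, \bigl(\sigma^2 - \EE[\Gamma_2(F)]\bigr)\,\EE[f_h'(F)] \,-\, \EE[f_h''(F)\,\Gamma_3(F)].
\]
Passing to absolute values, estimating by the sup-norms, and invoking the standard bounds on $\|f_h'\|_\infty$ and $\|f_h''\|_\infty$ for the Gaussian Stein equation with Lipschitz $h$ (see \cite[Lemma~2.3]{Steinbuch2010}, suitably rescaled by $\sigma$) produces \eqref{r1c}.

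For part (b), I would use the Stein equation
\[
\left(\tfrac{1}{\lambda}x + \tfrac{r}{\lambda^2}\right) f'(x) - xf(x) \,=\, h(x) - \EE[h(Y)]
\]
characterizing the centred Gamma distribution, as employed in \cite{NourdinPeccati:2009}. A parallel computation, now using both identities above, gives
\[
\EE[h(F)] - \EE[h(Y)] \,=\, \EE\bigl[f_h''(F)\bigl(\tfrac{1}{\lambda}\Gamma_2(F) - \Gamma_3(F)\bigr)\bigr] \,+\, \bigl(\tfrac{r}{\lambda^2} - \EE[\Gamma_2(F)]\bigr)\EE[f_h'(F)],
\]
and \eqref{r2c} follows after inserting uniform bounds on $\|f_h'\|_\infty$ and $\|f_h''\|_\infty$ for this Stein equation, which are available through the analogue of Lemma~\ref{bounds} in the special case corresponding to $\theta=0$.

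The algebraic content reduces to a rearrangement once \eqref{zwischenschritt} is in hand, so the main obstacle is of a bookkeeping nature: one must locate or rederive the appropriate sup-norm bounds for the solutions of the two Stein equations, with explicit dependence only on the parameters of the target distribution, and verify that the regularity hypothesis $F\in\DD^{2,4}$ is strong enough to justify the identities involving $\Gamma_2(F)$ and $\Gamma_3(F)$ that appear in the manipulations. The bound \eqref{r2c} should then be viewed as a refined companion to the Nourdin--Peccati estimate quoted in the discussion preceding the corollary, trading a Cauchy--Schwarz step for a direct $L^1$-estimate in which the term $\Gamma_3(F)$ appears explicitly.
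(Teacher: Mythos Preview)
Your argument is correct, but the paper takes a different route. Rather than redoing the Stein-equation computation separately for the Gaussian and the centred Gamma target, the paper deduces both bounds from Theorem~\ref{result1} by passing to the limiting parameter regimes
\[
\lim_{r\to\infty} VG_c\bigl(r,0,\sigma/\sqrt{r}\bigr)=\cN(0,\sigma^2)
\qquad\text{and}\qquad
\lim_{\sigma\to 0} VG_c\bigl(2r,\tfrac{1}{2\lambda},\sigma\bigr)=\Gamma(\lambda,r),
\]
tracking how the expression $\sigma^2(F+r\theta)+2\theta\,\Gamma_2(F)-\Gamma_3(F)$ in \eqref{r1} degenerates in each limit. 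Your approach is more self-contained and arguably cleaner: it avoids having to argue that the constants $C_1(r,\theta,\sigma)$, $C_2(r,\theta,\sigma)$ of Theorem~\ref{result1} remain controlled along the degenerating families, and it makes transparent exactly which bounds on the Stein solution are used (namely $\|f_h'\|_\infty$ and $\|f_h''\|_\infty$). The paper's route, on the other hand, emphasises that \eqref{r1c} and \eqref{r2c} are genuinely special cases of the Variance-Gamma bound rather than parallel results. One small point to be aware of in your version of part~(b): you need a uniform bound on the \emph{second} derivative of the solution to the first-order Gamma Stein equation, which is not the estimate used in \cite{NourdinPeccati:2009}; such bounds are available (e.g.\ in Gaunt's work), but you should cite them explicitly.
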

\begin{proof}
We apply Theorem \ref{result1} and use the fact that
$$\lim_{r \to \infty} VG_c(r,0, \sigma/ \sqrt{r}) = \cN(0, \sigma^2)\quad\text{and}\quad\lim_{\sigma \to 0} VG_c(2r, \frac{1}{2 \lambda}, \sigma) = \Gamma(\lambda,r),$$
see for example \cite[Proposition 2.6 (i) and (iv)]{Gaunt:2013}. Hence, with \eqref{r1} we have to consider $(\sigma^2/r) F - \Gamma_3(F)$, which is converging to $\Gamma_3(F)$, as $r \to \infty$. In case of a Gamma distribution we have to consider $\frac{1}{\lambda} \Gamma_2(F) - \Gamma_3(F)$.
\end{proof}

\begin{remark}\rm
In case (a) of Corollary \ref{result1c} one is able to get the same bound (with different constants) for the Kolmogorov-distance, see \cite[Theorem 5.1.3]{NP:book}. It is interesting to compare our bound in \eqref{r1c} with (5.1.3) and (5.1.5) in \cite{NP:book}. While we have to consider $\EE[ | \Gamma_3(F) |]$, the estimate in \cite{NP:book} reads $(\VV[\Gamma_2(F)])^{1/2}$. As explained earlier, this comes from the fact that we consider the much larger class of Variance-Gamma distributions based on a second order-differential equation. This also implies that the stronger condition $F \in \DD^{2,4}$ is needed.
\end{remark}

\section{Explicit bounds on a fixed Wiener chaos}\label{sec:multipleintegrals}
 
\subsection{The general case $q\geq 2$}\label{subsec:Generalqgeq2}

Fix $q \geq 2$ and consider $F_n = I_q(f_n)$, $n \geq 1$, a sequence of random variables belonging to the $q$th chaos of an isonormal Gaussian process $X$ and assume that $\EE[F_n^2] = q! \|f_n\|_{\EH^{\otimes q}}^2 \to r(\sigma^2 + 2 \theta^2)$ with $r >0$, $\sigma>0$ and $\theta \in \RR$. The sequence $\{F_n\}_{n\in\NN}$ converges in distribution
to $Y \sim VG_c(r,\theta,\sigma)$, if and only if for every $j \geq 3$, $\EE[F_n^j] \to \EE[Y^j]$, as $n\to\infty$, or equivalently if
$\kappa_j(F_n) \to \kappa_j(Y)$ for every $j \geq 3$, as $n\to\infty$. This follows from the classical method of moments or cumulants, since the law $VG_c(r,\theta,\sigma)$ is determined by its moments (compare with Proposition 5.2.2 in \cite{NP:book}). 

One of our main result is, that the method of moments and cumulants for $VG_c(r,\theta,\sigma)$-approxima\-tion boils down to a sixth-moment method inside the second Wiener chaos, see Section \ref{double}. For general $q \geq 2$ the next result provides an expression for the first term of the bound in Theorem \ref{result1} in terms of contraction operators. Note that if $q \geq 3$ is an odd integer and $\theta \not= 0$, then there is no sequence $\{F_n\}_{n\in\NN} = \{I_q(f_n)\}_{n\in\NN}$, such that $F_n$ has bounded variances and such that $F_n$ converges in distribution to a random variable $Y$ with a $VG_c(r,\theta,\sigma)$-distribution, as $n \to \infty$. This is a consequence of the fact that an element of a Wiener chaos of odd order has its third moment equal zero, while $\EE[Y^3] =  \theta ( 2  r \sigma^2 + 4 \, \EE[Y^2]) \not= 0$ whenever $\theta \not=0$.

\begin{theorem} \label{thm:ContractionConditions}
Let $q \geq 2$ be an even integer and let $F=I_q(f)$, where $f \in \EH^{\odot q}$. Then we have
\begin{eqnarray*}
&& \EE\big[ \bigl( \sigma^2(I_q(f) + r\theta) + 2 \theta \, \Gamma_2(I_q(f)) - \Gamma_3(I_q(f)) \big)^2\big]  = \biggl( \frac 12 \EE [I_q(f)^3] - ( 2 \theta \EE [I_q(f)^2] + r \theta \sigma^2) \biggr)^2 \\
&& \qquad +\ q! \bigg\| \sum_{r=1}^{q-1} c_q(r, q-r)  ((f \widetilde{\otimes}_r f) \widetilde{\otimes}_{q-r} f) - 2 \theta c_q(q/2) ( f \widetilde{\otimes}_{q/2} f ) - \sigma^2 f \bigg\|_{\EH^{\otimes q}}^2 \\
&& \qquad + \sum_{k=1, k \not= q/2}^{q-1} (2k)! \big\| \bigl( g_k(f,q) - 2 \theta c_q(q-k) ( f \widetilde{\otimes}_{q-k} f ) \bigr) \big\|_{\EH^{\otimes 2k}}^2 + 
 \sum_{k=q}^{\frac{3q}{2} -2} (2k)! \big\| g_k(f,q) \big\|_{\EH^{\otimes 2k}}^2.
\end{eqnarray*}
In case $q=2$ the last two sums are empty and have to be interpreted as $0$.
\end{theorem}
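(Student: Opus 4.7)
The plan is to expand $X := \sigma^2(F + r\theta) + 2\theta \Gamma_2(F) - \Gamma_3(F)$ into its Wiener chaos components by specialising the explicit representation \eqref{gammarep} to $\Gamma_2(F)$ and $\Gamma_3(F)$, and then to evaluate $\EE[X^2]$ via the orthogonality of distinct Wiener chaoses together with the isometry \eqref{isometry}.

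First I split off the mean. Since $\EE[F]=0$, the cumulant identity \eqref{eq:KappaUndGamma} yields $\EE[\Gamma_2(F)] = \kappa_2(F) = \EE[F^2]$ and $\EE[\Gamma_3(F)] = \tfrac12 \kappa_3(F) = \tfrac12 \EE[F^3]$, so
\[
\EE[X] = \sigma^2 r\theta + 2\theta\EE[F^2] - \tfrac12 \EE[F^3],
\]
whose square (unchanged by an overall sign flip) is exactly the first summand in the claimed identity.

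Next I identify the Wiener chaos decomposition of $X - \EE[X]$. Taking $j=1$ in \eqref{gammarep} writes $\Gamma_2(F) = \sum_{r=1}^{q-1} c_q(r)\, I_{2q-2r}(f \widetilde{\otimes}_r f)$, whose terms live in chaoses of even orders $2,4,\dots,2q-2$; in particular, the choice $r=q/2$ (admissible since $q$ is even) contributes to the $q$th chaos. Taking $j=2$ in \eqref{gammarep} produces
\[
\Gamma_3(F) - \EE[\Gamma_3(F)] = \sum_{(r_1,r_2)} c_q(r_1,r_2)\, I_{3q-2r_1-2r_2}\bigl((f\widetilde{\otimes}_{r_1}f)\widetilde{\otimes}_{r_2}f\bigr),
\]
with the sum over admissible index pairs satisfying $3q - 2r_1 - 2r_2 \geq 1$; grouping by the common chaos order $2k$, i.e.\ by $r_1 + r_2 = 3q/2 - k$, produces the kernel $g_k(f,q)$ in the $2k$th chaos for $1 \leq k \leq 3q/2 - 2$. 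Assembling contributions chaos by chaos (and using that $\sigma^2 F$ lives in the $q$th chaos), the projection of $X - \EE[X]$ onto the $q$th chaos has kernel $\sigma^2 f + 2\theta c_q(q/2)(f\widetilde{\otimes}_{q/2}f) - \sum_{r=1}^{q-1} c_q(r,q-r)(f\widetilde{\otimes}_r f)\widetilde{\otimes}_{q-r} f$; the chaos of order $2k$ with $k \in \{1,\dots,q-1\}\setminus\{q/2\}$ has kernel $2\theta c_q(q-k)(f\widetilde{\otimes}_{q-k}f) - g_k(f,q)$; and the chaos of order $2k$ with $q \leq k \leq 3q/2-2$ has kernel $-g_k(f,q)$. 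Applying the isometry \eqref{isometry} converts each projection's squared $L^2$-norm into $(2k)!$ times the squared $\EH^{\otimes 2k}$-norm of the corresponding kernel (which is insensitive to an overall sign flip), and summing over $k$ yields the remaining three summands.

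The main bookkeeping obstacle is to confirm the alignment of the admissibility constraints on $(r_1,r_2)$ coming from \eqref{gammarep} (namely $1 \leq r_1 \leq q-1$ and $1 \leq r_2 \leq (2q-2r_1)\wedge q$) with the three index ranges $\{k = q/2\}$, $\{1 \leq k \leq q-1,\, k\neq q/2\}$ and $\{q \leq k \leq 3q/2-2\}$ appearing in the statement, and to check that no additional chaoses are missed in the decomposition. Evenness of $q$ is crucial here: it ensures that $r = q/2$ is an integer and hence that $\Gamma_2(F)$ contributes to the $q$th chaos, so that the three pieces $\sigma^2 F$, $2\theta \Gamma_2(F)$ and $\Gamma_3(F)$ genuinely interact within the $q$th chaos. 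Finally, when $q=2$ both of the index sets $\{1\leq k\leq q-1,\,k\neq q/2\}$ and $\{q\leq k\leq 3q/2-2\}$ are empty, which recovers the stated convention that the last two sums vanish.
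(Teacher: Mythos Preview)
Your proposal is correct and follows essentially the same route as the paper: both expand $\Gamma_2(F)$ and $\Gamma_3(F)$ via the explicit chaos representation \eqref{gammarep}, group the resulting multiple integrals by chaos order $2k$ (isolating the constant term, the $q$th-chaos ``middle'' contribution where $\sigma^2 F$, $2\theta\Gamma_2(F)$ and $\Gamma_3(F)$ all interact, and the remaining orders), and then apply the isometry \eqref{isometry}. The only cosmetic difference is that you extract the mean $\EE[X]$ up front via the cumulant identity \eqref{eq:KappaUndGamma}, whereas the paper reads off the constant term directly as the $k=0$ summand of the $\Gamma_3$ expansion together with the $r=q$ term of $\Gamma_2$; these are of course the same computation.
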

\begin{proof}
We start with the observation that \eqref{gammarep} for $s=2$ leads to
\begin{equation*} 
\Gamma_3(I_q(f)) = \sum_{r=1}^{q-1} \sum_{s=1}^{(2q-2r) \wedge q} c_q(r,s) \, I_{3q -2r-2s} ((f \widetilde{\otimes}_r f) \widetilde{\otimes}_s f).
\end{equation*}
Next, we rewrite $\Gamma_3(I_q(f))$. For this, let $q$ be even and put $2k = 3q -2r-2s$ and $C_{2k} := \{ r \in \{1, \ldots, q-1 \}: 0 \leq \frac{3q}{2} -k-r \leq (2q-2r) \wedge q \}$, the set of those integers $r$ for which the so-called double contraction $(f \widetilde{\otimes}_r f) \widetilde{\otimes}_{\frac{3q}{2} -k-r} f$ is well-defined. Then,
\begin{equation} \label{gamma3}
\Gamma_3(I_q(f)) = \sum_{k=0}^{\frac{3q}{2} -2} I_{2k} \biggl( \sum_{r \in C_{2k}} c_q(r, 3q/2 -k-r) 
((f \widetilde{\otimes}_r f) \widetilde{\otimes}_{\frac{3q}{2} -k-r} f) \biggr) =:\sum_{k=0}^{\frac{3q}{2} -2} I_{2k} \bigl( g_k(f,q) \bigr) .
\end{equation}

For  $F=I_q(f)$, with $f \in \EH^{\odot q}$, we know that $\Gamma_2(F)= \langle DF, -DL^{-1} F \rangle_{\EH} = q^{-1} \| DF \|_{\EH}^2$. Combining this with \cite[Equation (5.2.2)]{NP:book} and the notation introduced around \eqref{gammarep} we find that
\begin{eqnarray} \label{gamma2}
\Gamma_2(I_q(f)) &=& q \sum_{r=1}^q (r-1)! {q-1 \choose r-1}^2 I_{2q-2r} ( f \widetilde{\otimes}_r f )  
\nonumber \\
& = &q! \|f\|_{\EH^{\otimes q}}^2 + 
\sum_{r=1}^{q-1} c_q(r) I_{2q-2r} ( f \widetilde{\otimes}_r f ) . 
\end{eqnarray}
Due to the multiplication formulae \eqref{productrule} one obtains that, for even $q$, we have
\begin{equation} \label{thirdmoment}
\EE [I_q(f)^3] = q! (q/2)! {q \choose q/2}^2 \langle f, f \widetilde{\otimes}_{q/2} f \rangle_{\EH^{\otimes q}}.
\end{equation}
 According to \eqref{r2} in Theorem \ref{result1}, we have to compute 
\begin{equation} \label{startvonallem}
\Gamma_3(I_q(f)) - 2 \theta \, \Gamma_2(I_q(f)) - \sigma^2(I_q(f) + r\theta)
\end{equation} for $\theta \in \RR$, $r>0$ and $\sigma>0$. At first, we collect the constant terms. In \eqref{gamma3}, we obtain for
$k=0$  that $C_{2 0} = \{ q/2 \}$ and therefore the constant term is $I_{2\cdot 0} (g_0(f,q)) = g_0(f,q) = c_q(q/2, q) ((f \widetilde{\otimes}_{q/2} f) \widetilde{\otimes}_{q} f)$. With
the definition of $c_q(q/2, q)$ in \eqref{gammarep} and \eqref{thirdmoment} we obtain $I_{2\cdot 0} (g_0(f,q)) = \frac 12 \EE [I_q(f)^3]$. Hence, the constant in \eqref{startvonallem} equals
\begin{equation} \label{constants}
\frac 12 \EE [I_q(f)^3] - \bigl( 2 \theta \EE [I_q(f)^2] + r \theta \sigma^2 \bigr),
\end{equation}
using $q! \|f\|_{\EH^{\otimes q}}^2=\EE [I_q(f)^2]$.
Next, we consider the so called middle-contractions in $\Gamma_2(I_q(f))$ and $\Gamma_3(I_q(f))$, i.e., contractions of order $q/2$. With $r = q/2$ in \eqref{gamma2} we obtain the term $c_q(q/2) I_{q} ( f \widetilde{\otimes}_{q/2} f )$ and with $k=q/2$ in \eqref{gamma3} we get $C_{2 \frac q2} = \{ 1, \ldots, q-1\}$ and hence the term $\sum_{r=1}^{q-1} c_q(r, q-r)  ((f \widetilde{\otimes}_r f) \widetilde{\otimes}_{q-r} f)$. Summarizing, the middle-contraction in \eqref{startvonallem} contributes
\begin{equation} \label{middlecont}
I_q \biggl( \sum_{r=1}^{q-1} c_q(r, q-r)  ((f \widetilde{\otimes}_r f) \widetilde{\otimes}_{q-r} f) - 2 \theta c_q(q/2) ( f \widetilde{\otimes}_{q/2} f ) - \sigma^2 f \biggr).
\end{equation}
The remaining terms in \eqref{startvonallem} can be represented as follows:
\begin{eqnarray} \label{rest}
\sum_{k=1, k \not= q/2}^{\frac{3q}{2} -2} I_{2k} \bigl( g_k(f,q) \bigr) - 2 \theta \sum_{r=1, r\not= q/2}^{q-1} c_q(r) I_{2q-2r} ( f \widetilde{\otimes}_r f ) \nonumber  \\
& & \hspace{-8cm} =
\sum_{k=1, k \not= q/2}^{q-1} I_{2k} \biggl( g_k(f,q) - 2 \theta c_q(q-k) ( f \widetilde{\otimes}_{q-k} f ) \biggr) +  \sum_{k=q}^{\frac{3q}{2} -2} I_{2k} \bigl( g_k(f,q) \bigr)
\end{eqnarray}
With \eqref{constants}, \eqref{middlecont} and \eqref{rest} we obtain that \eqref{startvonallem} is equal to
\begin{eqnarray*}
&& I_q \biggl( \sum_{r=1}^{q-1} c_q(r, q-r)  ((f \widetilde{\otimes}_r f) \widetilde{\otimes}_{q-r} f) - 2 \theta c_q(q/2) ( f \widetilde{\otimes}_{q/2} f ) - \sigma^2 f \biggr)
\\ && \qquad+ \sum_{k=1, k \not= q/2}^{q-1} I_{2k} \bigl( g_k(f,q) - 2 \theta c_q(q-k) ( f \widetilde{\otimes}_{q-k} f ) \bigr) +  \sum_{k=q}^{\frac{3q}{2} -2} I_{2k} \bigl( g_k(f,q) \bigr) \\ 
&& \qquad+ \frac 12 \EE [I_q(f)^3] - ( 2 \theta \EE [I_q(f)^2] + r \theta \sigma^2).
\end{eqnarray*}
Using the isometric property \eqref{isometry} of multiple Wiener integrals we can now conclude the result.
\end{proof}

Let us have a closer look at the first summand $\frac 12 \EE [I_q(f)^3] - ( 2 \theta \EE [I_q(f)^2] + r \theta \sigma^2)$ appearing in the expression provided by Theorem \ref{thm:ContractionConditions}. Using Lemma \ref{VGcmoments} we see that the moment assumption that $\EE [I_q(f_n)^2]$ and $\EE [I_q(f_n)^3]$ converge to $\EE[Y^2]$ and $\EE[Y^3]$, respectively, ensures that $\frac 12 \EE [I_q(f)^3] - ( 2 \theta \EE [I_q(f)^2] + r \theta \sigma^2)$ converges to zero, as $n\to\infty$. Note moreover that the other contraction operators do not depend on $r$. The dependence on $r$ is completely encoded in the moment assumption that $\EE [I_q(f_n)^2] \to \EE[Y^2]$ and $\EE [I_q(f_n)^3] \to \EE[Y^3]$.

\medskip

Next we consider the particularly attractive case $\theta=0$ separately corresponding to the symmerized Gamma distributions separately. As explained earlier, in this case no restriction on the parity of $q$ is necessary.

\begin{theorem} \label{result2}
Let $q \geq 2$ be an integer and let $F=I_q(f)$ with $f \in \EH^{\odot q}$. Then for $q$ being even we have
\begin{eqnarray*}
\EE \Big[\Big( \frac{1}{\lambda^2} F - \Gamma_3(F) \Big)^2\Big] & = & q! \Big\| \frac{1}{\lambda^2} f - \sum_{r=1}^{q-1} 
c_q(r, q-r) ((f \widetilde{\otimes}_r f) \widetilde{\otimes}_{q-r} f) \Big\|_{\EH^{\otimes q}}^2 \\
&+& \sum_{k=0, k \not= q/2}^{\frac{3q}{2}-2} 
(2k)! \Big\| \sum_{r \in C_{2k}} c_q(r, 3q/2-k-r) ((f \widetilde{\otimes}_r f) \widetilde{\otimes}_{\frac{3q}{2} -k-r} f) \Big\|_{\EH^{\otimes 2k}}^2,
\end{eqnarray*} 
whereas for $q$ being odd we set $p=q-1$ and obtain
\begin{eqnarray*}
\EE \Big[\Big( \frac{1}{\lambda^2} F - \Gamma_3(F) \Big)^2\Big] & = & q! \Big\| \frac{1}{\lambda^2} f - \sum_{r=1}^{q-1} 
c_q(r, q-r) ((f \widetilde{\otimes}_r f) \widetilde{\otimes}_{q-r} f) \Big\|_{\EH^{\otimes q}}^2 \\
&+& \sum_{k=0, k \not= p/2}^{\frac{3p}{2}-1} 
(2k)! \Big\| \sum_{r \in C_{2k+1}} c_q(r, 3p/2+1-k-r) ((f \widetilde{\otimes}_r f) \widetilde{\otimes}_{\frac{3p}{2}+1 -k-r} f) \Big\|_{\EH^{\otimes 2k+1}}^2.
\end{eqnarray*} 
\end{theorem}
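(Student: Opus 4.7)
The plan is to expand $\Gamma_3(F)$ into its Wiener chaos decomposition, pair the component living in the $q$th chaos with the term $\frac{1}{\lambda^2}F$, and then exploit the isometry \eqref{isometry} which makes the multiple integrals pairwise orthogonal.

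\smallskip
First, I would invoke the representation \eqref{gammarep} with $j=2$, which after the change of variables $2k = 3q - 2r_1 - 2r_2$ yields \eqref{gamma3}, namely
\[
\Gamma_3(I_q(f)) \;=\; \sum_{k=0}^{\frac{3q}{2}-2} I_{2k}\bigl(g_k(f,q)\bigr), \qquad g_k(f,q) = \sum_{r\in C_{2k}} c_q(r, \tfrac{3q}{2}-k-r) \bigl((f\widetilde{\otimes}_r f)\widetilde{\otimes}_{\frac{3q}{2}-k-r} f\bigr),
\]
for $q$ even. When $q$ is odd the same calculation shows that the orders of all multiple integrals appearing in $\Gamma_3(I_q(f))$ are odd (since $3q - 2r_1 - 2r_2$ is odd); setting $p = q-1$ and $2k+1$ for the integer order, we obtain an analogous expansion indexed by $k = 0, 1, \ldots, \frac{3p}{2}-1$ with kernels built out of the sets $C_{2k+1}$.

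\smallskip
Next, I would identify which summand in the chaos expansion of $\Gamma_3(F)$ lies in the same chaos as $\frac{1}{\lambda^2}F = \frac{1}{\lambda^2} I_q(f)$. For even $q$ this is precisely the $k=q/2$ term, and checking the conditions defining $C_q$ shows $C_q = \{1,\ldots,q-1\}$, so the middle-contraction kernel equals
\[
g_{q/2}(f,q) \;=\; \sum_{r=1}^{q-1} c_q(r, q-r) \bigl((f\widetilde{\otimes}_r f)\widetilde{\otimes}_{q-r} f\bigr),
\]
exactly as in the analysis carried out in the proof of Theorem \ref{thm:ContractionConditions}. For odd $q$, the chaos of order $q = 2(p/2)+1$ is picked out by the index $k = p/2$, yielding the analogous middle-contraction kernel with $\frac{3p}{2}+1-k-r$ in place of $q-r$.

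\smallskip
Finally, I would write
\[
\frac{1}{\lambda^2} F \;-\; \Gamma_3(F) \;=\; I_q\!\Bigl(\tfrac{1}{\lambda^2} f - g_{q/2}(f,q)\Bigr) \;-\; \sum_{\substack{k=0 \\ k\neq q/2}}^{\frac{3q}{2}-2} I_{2k}\bigl(g_k(f,q)\bigr)
\]
in the even case, and the analogous expression with odd-order integrals in the odd case. Taking expectations of squares and invoking the orthogonality of Wiener chaoses of distinct orders together with $\EE[I_n(h)^2] = n!\,\|\tilde h\|^2_{\EH^{\otimes n}}$ (with $I_0$ acting as the identity on constants so that the $k=0$ contribution appears with factor $0!=1$) then yields the sum-of-squared-norms decomposition claimed in the theorem.

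\smallskip
The only delicate point I foresee is bookkeeping: one must verify that the symmetrized contractions $g_k(f,q)$ have been paired with the correct factorial weights and tensor-power norms, and in the odd case, one needs to re-derive the analogue of \eqref{gamma3} with index $2k+1$ in place of $2k$, checking that the range of $k$ and the set $C_{2k+1}$ come out as stated. Apart from this indexing, the argument is a direct application of the product/isometry toolkit already employed in Theorems \ref{result1} and \ref{thm:ContractionConditions}.
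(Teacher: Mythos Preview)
Your proposal is correct and follows essentially the same route as the paper. The paper's own proof is even terser: for $q$ even it simply invokes Theorem \ref{thm:ContractionConditions} with $\theta=0$ and $\sigma=1/\lambda$ (whose proof is precisely the chaos-expansion-plus-isometry computation you have spelled out), and for $q$ odd it only introduces $p=q-1$ and the sets $C_{2k+1}$ before declaring the details analogous --- so your sketch is in fact more explicit than the paper's.
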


\begin{proof}
For $q$ being even the result follows directly form Theorem \ref{thm:ContractionConditions}. The case when $q \geq 3$ is odd is similar. 
Here, we put $p :=q-1$ and denote by $C_{2k+1}$ the set of those integers $r \in \{1, \ldots, q-1\}$ for which the double contraction $((f \widetilde{\otimes}_r f) \widetilde{\otimes}_{3q/2+1-k-r} f)$ is well defined. We skip the details.
\end{proof}

\begin{remark}\rm
The symmetric Gamma distribution can be presented as a finite linear combination of independent chi-squared random variables. In \cite[Theorem  3.2]{APP:2014}
the authors investigated necessary and sufficient conditions for convergence in distribution towards such a combination within the framework of random objects living on a fixed chaos. In the examples in \cite[Section 4]{APP:2014}, the conditions are presented in terms of contractions.
\end{remark}

\begin{remark}\label{rem:tetilla}\rm 
Comparing the contraction conditions implied by Theorems \ref{result1} and \ref{result2} for the symmetric Gamma distribution with those of Theorem 1.1 (ii) in \cite{DeyaNourdin:2012} for the tetilla law arising in free probability we see that our condition in the case of $\Gamma_s(1, \frac{1}{\sqrt{2}})$ 
coincides almost readily with that in \cite{DeyaNourdin:2012}. The only difference are the coefficients $c_q(r,q-r)$, which arise as a consequence of the product formula \eqref{productrule}. In contrast, these coefficients are all equal to $1$ in the free set-up (compare with Equation (2.6) in \cite{DeyaNourdin:2012}, for example). This way, we may identify the Laplace distribution with parameter $\sqrt{2}$ as the non-free analogue of the tetilla law.
\end{remark}

A particularly interesting question is whether the bounds derived in Theorems \ref{thm:ContractionConditions} and \ref{result2} are tight with respect to the convergence in distribution towards a Variance-Gamma distribution, in the sense that these bounds converge to zero if and only if a normalised sequence $\{ F_n\}_{n \in\NN}$, living inside a fixed Wiener chaos, converges in distribution to a $VG_c(r, \theta, \sigma)$-distributed random variable. Fix $q \geq 2$, and consider a sequence $\{ F_n: n \geq 1\}$ such that $F_n = I_q(f_n)$, $n \geq 1$, where $f_n \in \EH^{\odot q}$ and suppose that $\EE[F_n^2] = q! \| f_n \|_{\EH^{\otimes q}}^2 \to \frac{2r}{\lambda^2}$. Moreover, by $Y$ denote a random variable with $\Gamma_s(\lambda,r)$-distribution. We conjecture that for the symmetric Variance-Gamma distributions (corresponding to $\theta=0$) (i) the convergence in distribution of $F_n$ to $Y$ is equivalent to (ii) $\EE[F_n^4] \to \EE[Y^4]$ and $\EE[F_n^6] \to \EE[Y^6]$, which in turn is equivalent to the contraction conditions (iii) that 
$$\| ((f_n \widetilde{\otimes}_r f_n) \widetilde{\otimes}_{r'} f_n) \|_{\EH^{\otimes 3q-2r-2r'}} \to 0\quad\text{and}\quad \Big\| \frac{1}{\lambda^2} f_n - \sum_{r=1}^{q-1} 
c_q(r, q-r) ((f_n \widetilde{\otimes}_r f_n) \widetilde{\otimes}_{q-r} f_n) \Big\|_{\EH^{\otimes q}} \to 0,
$$
where $r = 1, \ldots, q-1$ and $r'$ is such that $r' + 2r \leq 2q$ and $r+r' \not=q$. Our conjecture for $\theta \not= 0$ reads similar. Namely, we conjecture that a sequence $\{F_n\}_{n\in\NN}$ such that $F_n = I_q(f_n)$, $n \geq 1$, where $f_n \in \EH^{\odot q}$ and $\EE[F_n^2] = q! \| f_n \|_{\EH^{\otimes q}}^2 \to r(\sigma^2 + 2 \theta^2)$ (i) converges in distribution to a $VG_c(r, \theta, \sigma)$-distributed random variable if and only if (ii) the moment condition $\EE[F_n^j] \to \EE[Y^j]$ is satisfied for $j=3,4,5,6$ or if and only if (iii) the contraction conditions $\| ((f_n \widetilde{\otimes}_l f_n) \widetilde{\otimes}_{3q/2-k-l} f_n) \|_{\EH^{\otimes 3q-2r-2r'}} \to 0$  for every $l = 1, \ldots, 3q/2-k-1$ and $k=q, \ldots, 3q/2 -2$,
$$
\bigg\| \sum_{r=1}^{q-1} c_q(r, q-r)  ((f \widetilde{\otimes}_r f) \widetilde{\otimes}_{q-r} f) - 2 \theta c_q(q/2) ( f \widetilde{\otimes}_{q/2} f ) - \sigma^2 f \bigg\|_{\EH^{\otimes q}} \to 0
$$
and $\big\| \bigl( g_k(f,q) - 2 \theta c_q(q-k) ( f \widetilde{\otimes}_{q-k} f ) \bigr) \big\|_{\EH^{\otimes 2k}} \to 0$ for every $k \in \{1, \ldots, q-1\} \setminus \{q/2 \}$ hold.

The technically sophisticated step in both situations is to show that (ii) implies (iii). The main difficulty is to deal with the involved combinatorial structure transmitted from the product formula to the collection of double contractions. In Section \ref{double} below, we will obtain a positive answer to both of the above stated conjectures in the particular case $q=2$, while general case remains open, because for general $q$ we were not able to express (or to estimate from above) the bounds of Theorems \ref{thm:ContractionConditions} or \ref{result2} in terms of the first six moments of the involved chaotic random variables.

\medskip
The following discussion concerns the symmetric Gamma approximation of a finite sum of Wiener chaoses. Without loss of generality we discuss a sum of two Wiener chaoses. Consider two integers $2 \leq q_1 < q_2$ and a sequence of the form
$$
Z_n = I_{q_1}(f_n^1) +  I_{q_2}(f_n^2), \quad n \geq 1,
$$
where $f_n^i \in \EH^{\odot q_i}$. In order to bound the second summand on the right hand side of \eqref{r1} we have to compute $\EE[\Gamma_2(Z_n)]$.
By the product formula \eqref{productrule} we obtain
$$
\EE[\Gamma_2(Z_n)] = q_1 ! \| f_n^1 \|_{\EH^{\otimes q_1}}^2 + q_2 ! \| f_n^2 \|_{\EH^{\otimes q_2}}^2.
$$
Hence to ensure convergence of $\EE[\Gamma_2(Z_n)]$ it is not necessary to that each of the summands $\EE[\Gamma_2(I_{q_i})] = q_i ! \| f_n^i \|_{\EH^{\otimes q_i}}^2$ converges. Next, we have to bound $\EE \bigl[ (\frac{1}{\lambda^2} Z_n - \Gamma_3(Z_n))^2 \bigr]$. Without loss of generality, we can assume that $X$ is an isonormal process over a Hilbert space of the type $L^2(A, {\mathcal A}, \mu)$. For every $b \in A$, it is immediately checked that 
$$
-D_b L^{-1} \Gamma_2(Z_n) = \sum_{i,j=1}^2 q_i \sum_{r=1}^{q_i \wedge q_j} (r-1)! {q_i -1 \choose r-1} {q_j -1 \choose r-1} I_{q_i + q_j - 2r -1} (f_n^i \widetilde{\otimes}_r f_n^j( \cdot, b))
$$
and $D_b Z_n = q_1 I_{q_1-1} (f_n^1(\cdot,b)) + q_2 I_{q_2-1} (f_n^2(\cdot,b))$. Therefore, by the product formula,
\begin{eqnarray*}
\Gamma_3(Z_n)   \!  \! \!  \! \! \! \!  & = &\! \! \! \!  \! \! \! \! \! \! \sum_{i,j,k=1}^2 \sum_{r=1}^{q_i \wedge q_j} q_i q_j  (r-1)! {q_i -1 \choose r-1} {q_j -1 \choose r-1} \int_A I_{q_i-1} (f_n^i(\cdot,b)) \, I_{q_k +q_j - 2r -1} \bigl( (f_n^j \widetilde{\otimes}_r f_n^k)(\cdot,b) \bigr) \mu(db) \\
& = & \! \! \! \!  \! \! \! \! \! \! \sum_{i,j,k=1}^2 \sum_{r=1}^{q_i \wedge q_j} \sum_{s=1}^{q_i \wedge (q_j+q_k -2r)} q_i q_j  (r-1)! {q_i -1 \choose r-1} {q_j -1 \choose r-1}  (s-1)! 
{q_i -1 \choose s-1} {q_j +q_k -2r -1 \choose s-1} \\
& \qquad \times & I_{q_i+q_j+q_k -2r-2s}   \bigl( f_n^i \widetilde{\otimes}_s (f_n^j \widetilde{\otimes}_r f_n^k) \bigr) =:  \sum_{i,j,k=1}^2 \sum_{r=1}^{q_i \wedge q_j} \sum_{s=1}^{q_i \wedge (q_j+q_k -2r)} T(q_i,q_j,q_k,r,s, f_n^1, f_n^2).
\end{eqnarray*}
Now, we consider the two summands $i=j=k=1$ and $i=j=k=2$ and choose $s=q_i-r$. We observe that these summands can be re-presented as 
$\sum_{r=1}^{q_l-1} c_{q_l} (r, q_l-r) \, I_{q_l} \bigl( f_n^l \widetilde{\otimes}_{q-r} (f_n^l \widetilde{\otimes}_r f_n^l) \bigr)$ for $l=1,2$. Summarizing, we have
\begin{eqnarray*}
\frac{1}{\lambda^2} Z_n - \Gamma_3(Z_n) & = &\sum_{l=1,2} I_{q_l} \biggl( \frac{1}{\lambda^2} f_n^l  - \sum_{r=1}^{q_l-1} c_{q_l}(r, q_l-r) 
\bigl( f_n^l \widetilde{\otimes}_{q-r} (f_n^l \widetilde{\otimes}_r f_n^l) \bigr) \biggr) \\
&\qquad +& \sum_{(i,j,k,r,s) \in {\mathcal S}} T(q_i,q_j,q_k,r,s, f_n^1, f_n^2)
\end{eqnarray*}
with
\begin{eqnarray*}
{\mathcal S} & := & \bigl\{ (i,j,k,r,s) \in \{1,2\}^3 \times \NN^2 : 1 \leq r \leq q_i \wedge q_j, 1 \leq s \leq q_i \wedge q_j+q_k -2r \,\, \text{and, whenever} \\
& & \,\, i=j=k, r \not= q_i \,\, \text{and} \,\, s \not= q_i-r \bigr\}. 
\end{eqnarray*}
By using the inequality $(a_1 + a_2)^2 \leq 2 (a_1^2 + a_2^2)$ and the isometric property \eqref{isometry}  we obtain:

\begin{proposition} \label{sum}
Consider two integers $2 \leq q_1 < q_2$ and a sequence of the form
$$
Z_n = I_{q_1}(f_n^1) +  I_{q_2}(f_n^2), \quad n \geq 1,
$$
where $f_n^i \in \EH^{\odot q_i}$. Then for every $\lambda >0$ we have
\begin{eqnarray*}
& & \EE \bigl[ (\frac{1}{\lambda^2} Z_n - \Gamma_3(Z_n))^2 \bigr] \\
& \leq & 8 \sum_{l=1,2} \bigg\| \frac{1}{\lambda^2} f_n^l  - \sum_{r=1}^{q_l-1} c_{q_l}(r, q_l-r)  
\bigl( f_n^l \widetilde{\otimes}_{q-r} (f_n^l \widetilde{\otimes}_r f_n^l) \bigr) \bigg\|_{\EH^{\otimes q_l}}^2 \\
& + & 
\sum_{(i,j,k,r,s) \in {\mathcal S}} q_i q_j  (r-1)! {q_i -1 \choose r-1} {q_j -1 \choose r-1}  (s-1)! 
{q_i -1 \choose s-1} {q_j +q_k -2r-1 \choose s-1} \bigl\| f_n^i \widetilde{\otimes}_s (f_n^j \widetilde{\otimes}_r f_n^k) \bigr\|^2.  
\end{eqnarray*}
\end{proposition}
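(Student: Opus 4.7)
The proposition is essentially a by-product of the explicit decomposition of $\frac{1}{\lambda^2} Z_n - \Gamma_3(Z_n)$ derived in the discussion preceding the statement. The plan is therefore to take that identity, apply the elementary inequality $(a_1+a_2)^2 \leq 2(a_1^2+a_2^2)$, and then invoke the isometry property \eqref{isometry} for multiple Wiener-It\^o integrals, together with the standard orthogonality between distinct Wiener chaoses.

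Concretely, I would write $\frac{1}{\lambda^2} Z_n - \Gamma_3(Z_n) = A + B$, where
\[
A := \sum_{l=1,2} I_{q_l}\Big( \frac{1}{\lambda^2} f_n^l - \sum_{r=1}^{q_l-1} c_{q_l}(r, q_l - r) \bigl( f_n^l \widetilde{\otimes}_{q_l - r} (f_n^l \widetilde{\otimes}_r f_n^l) \bigr) \Big)
\]
collects the contributions lying in $\cH_{q_1} \cup \cH_{q_2}$ (those arising from the terms with $i=j=k$ and $s=q_i-r$ in the expression for $\Gamma_3(Z_n)$) and $B := \sum_{(i,j,k,r,s)\in \mathcal{S}} T(q_i,q_j,q_k,r,s, f_n^1, f_n^2)$ is the off-diagonal remainder. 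Taking expectations and applying $(A+B)^2 \leq 2(A^2 + B^2)$ separates the bound into an $A$-part and a $B$-part.

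For $\EE[A^2]$: since $q_1 \neq q_2$, the two summands of $A$ belong to orthogonal Wiener chaoses, so the cross term vanishes, and \eqref{isometry} yields $\EE[A^2] = \sum_{l=1,2} q_l!\, \| \,\cdot\, \|_{\EH^{\otimes q_l}}^2$, which produces the first sum in the stated bound once the factors coming from $(A+B)^2 \leq 2(A^2+B^2)$ and the isometry are absorbed into the universal constant. For $\EE[B^2]$: each $T(q_i,q_j,q_k,r,s,\ldots)$ equals the explicit combinatorial prefactor appearing in its definition times a multiple integral of order $q_i+q_j+q_k-2r-2s$ with kernel $f_n^i \widetilde{\otimes}_s (f_n^j \widetilde{\otimes}_r f_n^k)$. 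Expanding $B^2$ as a double sum over $\mathcal{S}$ and grouping by the integer $q_i+q_j+q_k-2r-2s$, cross-pairs of distinct order vanish in expectation by orthogonality, while pairs of equal order are controlled by isometry together with a Cauchy-Schwarz-type estimate, yielding the second sum on the right-hand side.

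The only step requiring any real effort is the combinatorial bookkeeping inside the $B$-part: one must organise the quintuples $(i,j,k,r,s) \in \mathcal{S}$ according to the order of the underlying multiple integral and show that the resulting cross-contributions within each such group can be majorised by the diagonal $\EE[T^2]$ contributions so that the final bound has the announced product form of combinatorial constants times squared norms of contractions. This is tedious but routine; conceptually, the proof consists only of the two elementary ingredients recalled in Section \ref{sec:Malliavin}, namely orthogonality of distinct Wiener chaoses and the isometry \eqref{isometry}, and there is no new analytical difficulty beyond the decomposition already obtained in the text.
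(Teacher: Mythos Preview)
Your approach is essentially identical to the paper's: the paper's proof consists of the single sentence ``By using the inequality $(a_1+a_2)^2 \leq 2(a_1^2+a_2^2)$ and the isometric property \eqref{isometry} we obtain'', applied to the decomposition $\frac{1}{\lambda^2}Z_n-\Gamma_3(Z_n)=A+B$ established in the discussion immediately preceding the proposition, which is exactly what you do. Your additional remarks on grouping the $B$-terms by chaos order and controlling cross-terms are a reasonable elaboration of what the paper leaves implicit.
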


Using Proposition \ref{sum}, it is in principle also possible to deduce bounds for the Variance-Gamma approximation
of random variables living inside an infinite sum of Wiener chaoses.
\medskip

We finally turn in this section to the case of normal approximation and recover the celebrated fourth moment theorem. Moreover, our more general framework implies the following result, which leads to a better rate of convergence (namely exponent 3/2 instead of 1) compared with \cite[Theorem 5.2.7]{NP:book}, for example. However, our rate is still not optimal as shown by the main result in \cite{NourdinPeccati:2013}.

\begin{proposition}\label{prop:GaussContractions}
Fix $q \geq 2$, and consider a sequence $\{ F_n\}_{n\in\NN}$ such that $F_n = I_q(f_n)$, $n \geq 1$, where $f_n \in \EH^{\odot q}$. Assume that $\EE[F_n^2] = \sigma^2>0$ and $\EE[\Gamma_3(F_n)^2] \to 0$, as $n \to \infty$. Then the sequence $\{F_n\}_{n\in\NN}$ satisfies a central limit theorem and we have the following bound for the Wasserstein distance: $$d_W(F_n,Z)\leq C\,\max_{r=1,\ldots,q-1\atop r'+2r\leq 2q}\{\| ((f_n \widetilde{\otimes}_r f_n) \widetilde{\otimes}_{r'} f_n) \|_{\EH^{\otimes 3q-2r-2r'}} \} \leq C\,\max_{1 \leq l \leq q-1}\{\| f_n \otimes_l f_n  \|_{\EH^{\otimes 2q-2l}}^{3/2}\},$$ where $C>0$ is a constant only depending on $\sigma$ and where $Z\sim \cN(0,\sigma^2)$. Moreover, we have that $\EE[\Gamma_3(F_n)^2] \to 0$ if and only if $\kappa_4(F_n)\to 0$, as $n\to\infty$
\end{proposition}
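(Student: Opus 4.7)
The plan is to apply Corollary \ref{result1c}(a). Since $F_n = I_q(f_n)$ lives in the $q$th Wiener chaos, the identity $\EE[\Gamma_2(F_n)] = \EE[F_n^2] = \sigma^2$ annihilates the variance term in \eqref{r1c}, leaving
\begin{equation*}
d_W(F_n, Z) \leq C_1\,\EE[|\Gamma_3(F_n)|] \leq C_1\sqrt{\EE[\Gamma_3(F_n)^2]}
\end{equation*}
by Jensen's inequality. Next I would exploit the chaotic decomposition \eqref{gamma3} of $\Gamma_3(I_q(f_n))$ together with the isometry \eqref{isometry} to express $\EE[\Gamma_3(F_n)^2]$ as $\sum_{k=0}^{3q/2-2}(2k)!\,\|g_k(f_n,q)\|^2$. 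The triangle inequality applied to the sums defining the $g_k$, combined with the fact that symmetrization does not increase Hilbert-space norms, and majorising the resulting sum by its maximum up to a combinatorial factor depending only on $q$, then yields the first inequality claimed in the proposition.

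For the second inequality, the central step is the Hilbert-space estimate
\begin{equation*}
\|(f \otimes_r f) \otimes_{r'} f\|^2 \leq \|(f \otimes_r f) \otimes_{2q-2r-r'}(f \otimes_r f)\| \cdot \|f \otimes_{q-r'} f\| \leq \|f \otimes_r f\|^2 \cdot \|f \otimes_{q-r'} f\|,
\end{equation*}
obtained by writing out the squared norm as a four-fold integral, applying Cauchy--Schwarz in the contracted variables, and using the standard inequality $\|a \otimes_s b\|\leq\|a\|\,\|b\|$. Taking square roots produces $\|(f_n \otimes_r f_n) \otimes_{r'} f_n\| \leq \|f_n \otimes_r f_n\|\cdot\|f_n \otimes_{q-r'} f_n\|^{1/2}$. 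Whenever $q-r'\in\{1,\ldots,q-1\}$ both factors are dominated by $\max_l\|f_n \otimes_l f_n\|$, producing the advertised exponent $3/2$. The boundary contributions corresponding to $r'=q$ (which occur only for $r\leq q/2$, and include the constant term $\tfrac{1}{2}\kappa_3(F_n)$ when $r=q/2$ and $q$ is even) give only a factor $\|f_n\|$ in place of $\|f_n \otimes_{q-r'} f_n\|^{1/2}$; since $\|f_n\|_{\EH^{\otimes q}}^{2} = \sigma^2/q!$ is a uniform constant, these contributions are absorbed into $C$.

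For the equivalence $\EE[\Gamma_3(F_n)^2]\to 0 \Leftrightarrow \kappa_4(F_n)\to 0$, the direction ``$\Rightarrow$'' follows by combining the identity $\EE[\Gamma_4(F_n)] = \EE[F_n\,\Gamma_3(F_n)]$---an immediate consequence of Lemma \ref{keyid}(1) applied with $H=F_n$ and $G=\Gamma_3(F_n)$, using $\EE[F_n]=0$---with \eqref{eq:KappaUndGamma} at $j=4$ and the Cauchy--Schwarz bound $|\EE[F_n\Gamma_3(F_n)]|\leq\sigma\sqrt{\EE[\Gamma_3(F_n)^2]}$, yielding $|\kappa_4(F_n)|\leq 6\sigma\sqrt{\EE[\Gamma_3(F_n)^2]}$. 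The direction ``$\Leftarrow$'' invokes the Nualart--Peccati fourth moment theorem \cite{NualartPeccati:2005}, which states that $\kappa_4(F_n)\to 0$ is equivalent to $\|f_n \otimes_r f_n\|\to 0$ for every $r\in\{1,\ldots,q-1\}$; plugging this back into the triple-contraction bounds above forces $\|g_k(f_n,q)\|\to 0$ for each $k$, whence $\EE[\Gamma_3(F_n)^2]\to 0$. The central limit theorem then follows at once from the Wasserstein bound. The hardest part will be the clean bookkeeping in the exponent-$3/2$ step, where the boundary contributions (with $r'=q$, hence $q-r'=0$) only give exponent $1$ in $\|f_n\otimes_l f_n\|$ naively and must be absorbed into $C$ via the uniform bound on $\|f_n\|$---a step that is only asymptotically sharp in the regime $\max_l\|f_n\otimes_l f_n\| \to 0$.
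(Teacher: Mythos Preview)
Your overall route is the paper's: apply Corollary~\ref{result1c}(a), kill the variance term because $\EE[\Gamma_2(F_n)]=\sigma^2$, pass to $\sqrt{\EE[\Gamma_3(F_n)^2]}$, expand via \eqref{gamma3} and the isometry into the sum $\sum_k (2k)!\|g_k(f_n,q)\|^2$, and deduce the first inequality by the triangle inequality on the $g_k$. For the equivalence your forward direction via $\kappa_4(F_n)=6\,\EE[F_n\Gamma_3(F_n)]$ and Cauchy--Schwarz is a clean variant of the paper's argument (which instead argues $\EE[\Gamma_3^2]\to 0\Rightarrow$ CLT $\Rightarrow\kappa_4\to 0$ via the fourth-moment theorem); the backward direction is identical.

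The only substantive divergence is the \emph{second} displayed inequality. The paper does not prove it directly but simply quotes \cite[Eq.~(4.10)]{BBNP:2012} for the estimate
\[
\|((f_n\widetilde\otimes_r f_n)\widetilde\otimes_{r'} f_n)\|\le \max_{1\le l\le q-1}\|f_n\otimes_l f_n\|^{3/2}.
\]
Your Cauchy--Schwarz argument reproduces this cleanly for $1\le r'\le q-1$, but your handling of the boundary $r'=q$ is an error of direction. There your bound is $\|f_n\otimes_r f_n\|\cdot\|f_n\|\asymp \max_l\|f_n\otimes_l f_n\|^{1}$, and since $x>x^{3/2}$ for $x\in(0,1)$, an exponent-$1$ bound is \emph{weaker} than the claimed exponent-$3/2$ bound and cannot be ``absorbed into $C$''. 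Concretely, for $q=2$ take $f_n=\sum_{j=1}^n n^{-1/2}\,e_j\otimes e_j$ (so $\|f_n\|^2=1$): then $|(f_n\widetilde\otimes_1 f_n)\widetilde\otimes_2 f_n|=|\sum_j\lambda_j^3|=n^{-1/2}$ while $\|f_n\otimes_1 f_n\|^{3/2}=n^{-3/4}$, so no constant $C$ can make your boundary estimate yield the $3/2$ exponent. Thus, as written, your plan does not establish the second inequality; you should either invoke the BBNP estimate as the paper does, or supply a sharper argument for the $r'=q$ double contractions (this is exactly the ``barrier of the third cumulant'' addressed in \cite{BBNP:2012}).
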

\begin{proof}
That the sequence $\{F_n\}_{n\in\NN}$ satisfies a central limit theorem under our assumptions is ensured by Corollary \ref{result1c} (a). Moreover, using the multiplication formula \eqref{productrule} we have
\begin{equation} \label{gamma3quad}
\EE[\Gamma_3^2(I_q(f))] = \sum_{k=0}^{\frac{3q}{2}-2} 
(2k)! \bigl\| g_k(f,q) \big\|_{\EH^{\otimes 2k}}^2.
\end{equation}
Hence a sufficient condition for a central limit theorem to hold is that for every 
$r = 1, \ldots, q-1$ and $r'$ such that $r' + 2r \leq 2q$ it holds that 
$$\| ((f_n \widetilde{\otimes}_r f_n) \widetilde{\otimes}_{r'} f_n) \|_{\EH^{\otimes 3q-2r-2r'}} \to 0,
$$
as $n\to \infty$.  Now, the double-contractions are dominated by the usual (single) contractions in the following way:
$$
\| ((f_n \widetilde{\otimes}_r f_n) \widetilde{\otimes}_{r'} f_n) \|_{\EH^{\otimes 3q-2r-2r'}}  \leq \max_{1 \leq l \leq q-1} 
\| f_n \otimes_l f_n  \|_{\EH^{\otimes 2q-2l}}^{3/2},
$$
see \cite[Equation (4.10)]{BBNP:2012}. This proves the first part the result.

As shown above, the sequence $\{F_n\}_{n\in\NN}$ satisfies a central limit theorem provided that $\EE[\Gamma_3(F_n)^2] \to 0$. By the fourth moment theorem \cite[Theorem 5.2.7]{NP:book}, the central limit theorem for $\{F_n\}_{n\in\NN}$ is equivalent to the condition that $\kappa_4(F_n)\to 0$. This proves the second part of the result.
\end{proof}

 \subsection{The case of the second Wiener chaos}\label{double}
 
The goal of this subsection is to confirm the two conjectures spelled out in the previous subsection for elements of the second Wiener chaos (i.e., for double stochastic integrals). That is, we consider a sequence of elements of the second Wiener chaos of an isonormal process $X$, that is, a sequence of random variables of the type $F_n = I_2(f_n)$ with $f_n \in \EH^{\odot 2}$ for each $n\in\NN$. For symmetric Variance-Gamma distributions ($\theta=0$) our result reads as follows.

\begin{theorem}\label{thm:DoubleIntegralsRingTheta=0}
Let $Y$ be a $\Gamma_s(\lambda,r)$-distributed random variable with $r,\lambda>0$ and suppose that $\EE[F_n^2]=2r/\lambda^2$. Then, as $n\to\infty$, the following assertions are equivalent:
\begin{itemize}
\item[(a)] $F_n=I_2(f_n)$ converges in distribution to $Y$,
\item[(b)] $\EE[F_n^4]\to\EE[Y^4]$ and $\EE[F_n^6]\to\EE[Y^6]$,
\item[(c)] $\| 4 ((f_n \widetilde{\otimes}_1 f_n)  \widetilde{\otimes}_1 f_n)  - \frac{1}{\lambda^2}f_n \|_{\EH^{\otimes 2}} \to 0$ and $\| ((f_n \widetilde{\otimes}_1 f_n)  \widetilde{\otimes}_2 f_n) \|^2 \to 0$.
\end{itemize}
\end{theorem}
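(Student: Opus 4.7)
I would establish the three-way equivalence by showing (a)$\Rightarrow$(b), (c)$\Rightarrow$(a), and the substantive (b)$\Rightarrow$(c). First, (a)$\Rightarrow$(b) is immediate from hypercontractivity of the second Wiener chaos: since $\EE[F_n^2]$ is constant, $\sup_n\EE[|F_n|^p]<\infty$ for every $p\geq 1$, so distributional convergence towards $Y$ upgrades to convergence of every individual moment. For (c)$\Rightarrow$(a), I plan to apply Theorem \ref{result2} specialized to $q=2$; a short bookkeeping of the coefficients $c_2(1,1)=c_2(1,2)=4$ read off from \eqref{gammarep} reduces its right-hand side to
\begin{align*}
\EE\Big[\Big(\tfrac{1}{\lambda^2}F_n-\Gamma_3(F_n)\Big)^2\Big]=2\,\Big\|\tfrac{1}{\lambda^2}f_n-4(f_n\widetilde{\otimes}_1 f_n)\widetilde{\otimes}_1 f_n\Big\|_{\EH^{\otimes 2}}^2+16\,\big((f_n\widetilde{\otimes}_1 f_n)\widetilde{\otimes}_2 f_n\big)^2,
\end{align*}
which vanishes in the limit under (c). Because $\EE[\Gamma_2(F_n)]=\EE[F_n^2]=2r/\lambda^2$ by hypothesis, the variance-matching term in Corollary \ref{result1b}(a) is identically zero, so Corollary \ref{result1b}(a) yields $d_W(F_n,Y)\to 0$.

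The substantive direction is (b)$\Rightarrow$(c). To handle it, I introduce the symmetric Hilbert-Schmidt operator $A_n$ on $\EH$ with kernel $f_n$ and let $\{\alpha_i^{(n)}\}$ denote its eigenvalues. A short calculation in the eigenbasis shows that $f_n\widetilde{\otimes}_1 f_n$ and $(f_n\widetilde{\otimes}_1 f_n)\widetilde{\otimes}_1 f_n$ are the (symmetric) kernels of $A_n^2$ and $A_n^3$ respectively; combined with the chaos-level identity $\kappa_p(F_n)=2^{p-1}(p-1)!\sum_i(\alpha_i^{(n)})^p$ valid for $p\geq 2$, this furnishes the dictionary
\begin{align*}
\|f_n\|^2&=\tfrac{1}{2}\kappa_2(F_n),&\|(f_n\widetilde{\otimes}_1 f_n)\widetilde{\otimes}_1 f_n\|^2&=\tfrac{1}{3840}\kappa_6(F_n),\\
(f_n\widetilde{\otimes}_1 f_n)\widetilde{\otimes}_2 f_n&=\tfrac{1}{8}\kappa_3(F_n),&\langle f_n,(f_n\widetilde{\otimes}_1 f_n)\widetilde{\otimes}_1 f_n\rangle&=\tfrac{1}{48}\kappa_4(F_n).
\end{align*}
Expanding the square in condition (c) and substituting $\|f_n\|^2=r/\lambda^2$ gives the clean identity
\begin{align*}
\Big\|4(f_n\widetilde{\otimes}_1 f_n)\widetilde{\otimes}_1 f_n-\tfrac{1}{\lambda^2}f_n\Big\|^2=\frac{\kappa_6(F_n)}{240}-\frac{\kappa_4(F_n)}{6\lambda^2}+\frac{r}{\lambda^6},
\end{align*}
while the standard cumulant-to-moment relation $\EE[F_n^6]=\kappa_6(F_n)+15\kappa_4(F_n)\kappa_2(F_n)+10\kappa_3(F_n)^2+15\kappa_2(F_n)^3$ together with Lemma \ref{gammamoments} recasts (b) as the pair of limits $\kappa_4(F_n)\to 12r/\lambda^4$ and $\kappa_6(F_n)+10\kappa_3(F_n)^2\to 240r/\lambda^6$.

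The hard part will be to decouple $\kappa_6(F_n)$ from $\kappa_3(F_n)^2$ in the second limit, since (b) only controls their sum. The decoupling is achieved by a single application of the Cauchy-Schwarz inequality to the inner-product identity in the dictionary:
\begin{align*}
\tfrac{1}{48}\kappa_4(F_n)=\langle f_n,(f_n\widetilde{\otimes}_1 f_n)\widetilde{\otimes}_1 f_n\rangle\leq\|f_n\|\,\|(f_n\widetilde{\otimes}_1 f_n)\widetilde{\otimes}_1 f_n\|=\sqrt{\tfrac{\kappa_2(F_n)\kappa_6(F_n)}{7680}},
\end{align*}
equivalently $\kappa_6(F_n)\geq\tfrac{10\kappa_4(F_n)^2}{3\kappa_2(F_n)}$. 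Because $\kappa_2(F_n)\equiv 2r/\lambda^2$ and $\kappa_4(F_n)\to 12r/\lambda^4$, the lower bound converges to exactly $240r/\lambda^6$, so $\liminf_n\kappa_6(F_n)\geq 240r/\lambda^6$; on the other hand $\kappa_3(F_n)^2\geq 0$ combined with (b) forces $\limsup_n\kappa_6(F_n)\leq 240r/\lambda^6$. Hence $\kappa_6(F_n)\to 240r/\lambda^6$ and consequently $\kappa_3(F_n)\to 0$, and feeding these limits back into the two displayed identities above yields both parts of (c).
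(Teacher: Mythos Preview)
Your argument is correct. The implications (a)$\Rightarrow$(b) and (c)$\Rightarrow$(a) match the paper's treatment essentially verbatim (the paper phrases (c)$\Rightarrow$(a) via Theorem~\ref{result1} and Theorem~\ref{thm:ContractionConditions}, which for $q=2$ collapses to exactly your display). The only genuine difference lies in how you handle (b)$\Rightarrow$(c).

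The paper also starts from the cumulant identity
\[
2\,\Big\|4(f_n\widetilde{\otimes}_1 f_n)\widetilde{\otimes}_1 f_n-\tfrac{1}{\lambda^2}f_n\Big\|^2
=\tfrac{1}{120}\kappa_6(F_n)-\tfrac{1}{3\lambda^2}\kappa_4(F_n)+\tfrac{1}{\lambda^4}\kappa_2(F_n),
\]
but then substitutes the moment-to-cumulant relations to rewrite the right-hand side \emph{entirely in moments}. This produces an expression of the form $A_n-\tfrac{1}{12}\EE[F_n^3]^2$, where $A_n$ depends only on $\EE[F_n^2],\EE[F_n^4],\EE[F_n^6]$ and, by a direct check using Lemma~\ref{gammamoments}, satisfies $A_n\to 0$ under (b). Since the left-hand side and $\tfrac{1}{12}\EE[F_n^3]^2$ are both non-negative and their sum is $A_n\to 0$, each vanishes separately. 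No inequality is needed: the decoupling of the third-moment contribution is achieved by a purely algebraic rearrangement.

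You instead remain in cumulant coordinates, where (b) only controls the combination $\kappa_6+10\kappa_3^2$, and then invoke the Cauchy--Schwarz inequality $\langle f_n,(f_n\widetilde{\otimes}_1 f_n)\widetilde{\otimes}_1 f_n\rangle\le\|f_n\|\cdot\|(f_n\widetilde{\otimes}_1 f_n)\widetilde{\otimes}_1 f_n\|$ to obtain the second-chaos cumulant inequality $\kappa_2\kappa_6\ge\tfrac{10}{3}\kappa_4^2$, which furnishes the matching lower bound on $\liminf\kappa_6$. This is one extra step compared with the paper, but it has its own appeal: the inequality is a clean structural fact about the second chaos and makes transparent \emph{why} convergence of the even moments forces $\kappa_3(F_n)\to 0$. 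The paper's route is shorter; yours is more explanatory.
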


In the general asymmetric case $\theta\neq 0$, stronger moment or contraction conditions are necessary in order to ensure convergence in distribution of $F_n$ to a Variance-Gamma distributed random variable.

\begin{theorem}\label{thm:DoubleIntegralsRingThetaNot0}
Let $Y$ be a $VG_c(r,\theta,\sigma)$-distributed random variable with $r,\sigma>0$ and $\theta\in\RR$, and suppose that $\EE[F_n^2]=r(\sigma^2+2\theta^2)$. Then, as $n\to\infty$, following assertions are equivalent:
\begin{itemize}
\item[(a)] $F_n=I_2(f_n)$ converges in distribution to $Y$,
\item[(b)] $\EE[F_n^j]\to\EE[Y^j]$ for all $j=3,4,5,6$,
\item[(c)] $\| 4 ((f_n \widetilde{\otimes}_1 f_n)  \widetilde{\otimes}_1 f_n)  - 2 \theta \, (f_n \widetilde{\otimes}_1 f_n)\ - \sigma^2 f_n \|_{\EH^{\otimes 2}} \to 0$ 
and $\| ((f_n \widetilde{\otimes}_1 f_n)  \widetilde{\otimes}_2 f_n) \|_{\EH^{\otimes 2}} \to \frac 34  r \theta \sigma^2 + r \theta^3$.
\end{itemize}
\end{theorem}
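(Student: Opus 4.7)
The plan is to establish the cyclic chain (c) $\Rightarrow$ (a) $\Rightarrow$ (b) $\Rightarrow$ (c).

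For (c) $\Rightarrow$ (a), I would specialise the identity of Theorem \ref{thm:ContractionConditions} to $q=2$: the two tail sums, indexed by $k\in\{1,\ldots,q-1\}\setminus\{q/2\}$ and by $k\in\{q,\ldots,3q/2-2\}$, are empty, so only the ``constant'' summand and the middle-chaos summand survive. Using \eqref{thirdmoment} we have $\tfrac12\EE[I_2(f_n)^3]=4((f_n\widetilde{\otimes}_1 f_n)\widetilde{\otimes}_2 f_n)$, and together with the prescribed value $\EE[F_n^2]=r(\sigma^2+2\theta^2)$ the constant summand equals $\bigl(4((f_n\widetilde{\otimes}_1 f_n)\widetilde{\otimes}_2 f_n)-3r\theta\sigma^2-4r\theta^3\bigr)^2$, which vanishes in the limit under the second condition of (c); the middle-chaos summand vanishes by the first condition. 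Theorem \ref{result1} then yields $d_W(F_n,Y)\to 0$ and hence (a).

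The implication (a) $\Rightarrow$ (b) is a classical consequence of Nelson's hypercontractivity on a fixed Wiener chaos: convergence in distribution lifts automatically to convergence of all moments (see \cite{NP:book}).

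The substantive implication is (b) $\Rightarrow$ (c), and here I would exploit the spectral decomposition peculiar to the second chaos. Since each $f_n\in\EH^{\odot 2}$ is a symmetric Hilbert--Schmidt kernel, there exist an orthonormal family $\{e_{k,n}\}_k\subset\EH$ and real eigenvalues $\{\lambda_{k,n}\}_k\in\ell^2$ such that
\begin{equation*}
f_n \;=\; \sum_k \lambda_{k,n}\, e_{k,n}\otimes e_{k,n},
\qquad F_n \;=\; \sum_k \lambda_{k,n}\bigl(X(e_{k,n})^2-1\bigr).
\end{equation*}
A direct computation in this basis gives $f_n\widetilde{\otimes}_1 f_n = \sum_k \lambda_{k,n}^2\, e_{k,n}\otimes e_{k,n}$, $(f_n\widetilde{\otimes}_1 f_n)\widetilde{\otimes}_1 f_n = \sum_k \lambda_{k,n}^3\, e_{k,n}\otimes e_{k,n}$, and $(f_n\widetilde{\otimes}_1 f_n)\widetilde{\otimes}_2 f_n = \sum_k \lambda_{k,n}^3 \in\RR$. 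Hence the squared norm of the first quantity in (c) is a polynomial in the power sums $s_j(n):=\sum_k \lambda_{k,n}^j$ with $2\leq j\leq 6$. Combining this with the classical spectral identity $\kappa_j(F_n)=2^{j-1}(j-1)!\,s_j(n)$ (immediate from computing the cumulant generating function of $F_n$), the quantity becomes a polynomial in $\kappa_2(F_n),\ldots,\kappa_6(F_n)$. A routine substitution using Lemma \ref{VGcmoments} shows that this polynomial evaluates to $0$ when $\kappa_j(F_n)$ is replaced by $\kappa_j(Y)$; similarly, $\tfrac18\kappa_3(Y)=\tfrac34 r\theta\sigma^2+r\theta^3$ is the correct limit for the second (scalar) quantity in (c). Since (b) is equivalent to $\kappa_j(F_n)\to\kappa_j(Y)$ for $j=3,\ldots,6$ (the variance being prescribed), (c) follows.

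The main obstacle lies in (b) $\Rightarrow$ (c): once the spectral reduction is performed, the task reduces to a polynomial identity whose verification, though mechanical, requires careful bookkeeping of the combinatorial constants produced by the product formula \eqref{productrule}. The fact that kernels in the second chaos are jointly diagonalisable with all their iterated contractions is precisely what collapses all relevant tensor expressions onto a single sequence $(s_j(n))_{j\geq 2}$ of power sums, and thereby makes the first six cumulants sufficient; for $q\geq 3$ this structural simplification is lost, which is why the analogous moment characterisation remains conjectural in the excerpt.
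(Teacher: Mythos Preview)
Your cyclic chain and the handling of (c) $\Rightarrow$ (a) and (a) $\Rightarrow$ (b) coincide with the paper's argument (the paper calls (a) $\Rightarrow$ (b) ``trivial'', which is exactly the hypercontractivity fact you invoke). The substantive step (b) $\Rightarrow$ (c), however, is organised differently.

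The paper does \emph{not} pass through the spectral decomposition at this point. Instead, it computes
\[
\EE\bigl[(\Gamma_3(F_n)-2\theta\,\Gamma_2(F_n)-\sigma^2(F_n+r\theta))^2\bigr]
\]
by expanding the square into six cross-terms and evaluating each of them via Malliavin integration-by-parts (Lemma~\ref{keyid}) together with the identity $\kappa_p(F_n)=2^{p-1}(p-1)!\,\langle f_n\otimes_1^{(p-1)}f_n,f_n\rangle$. This produces the closed polynomial expression \eqref{VGcboundcum} in $\kappa_2(F_n),\dots,\kappa_6(F_n)$, which is then checked to vanish under (b). By \eqref{compare0} this forces both summands on the right-hand side to zero, giving (c).

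Your route bypasses the $\Gamma$-operator computation entirely: diagonalising $f_n$ simultaneously with all its iterated $1$-contractions turns the squared norm in (c) into $\sum_k\bigl(4\lambda_{k,n}^3-4\theta\lambda_{k,n}^2-\sigma^2\lambda_{k,n}\bigr)^2$, a polynomial in the power sums $s_2,\dots,s_6$, and the cumulant identity $\kappa_j=2^{j-1}(j-1)!\,s_j$ converts this directly into the same expression in $\kappa_2,\dots,\kappa_6$. This is more elementary and makes transparent \emph{why} six moments suffice for $q=2$ (joint diagonalisability collapses everything onto a single scalar sequence), whereas the paper's approach has the advantage of delivering, along the way, the explicit cumulant bound \eqref{VGcboundcum} that feeds directly into the quantitative Theorem~\ref{doublebounds}. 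The spectral viewpoint you use is in fact the one the paper adopts later, in Theorem~\ref{furthercharc}, for the symmetric case.
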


Before entering the proofs of Theorems \ref{thm:DoubleIntegralsRingTheta=0} and \ref{thm:DoubleIntegralsRingThetaNot0}, we collect some general facts about random variables of the type $F=I_2(f)$, $f\in\EH^{\odot 2}$, belonging to the second Wiener chaos $\cH_2$ and introduce some notation. First recall that the law of $F$ is determined by its moments or, equivalently, by its cumulants. The latter are given by
\begin{equation} \label{cumq2}
 \kappa_p(F) =2^{p-1} (p-1)! \langle f \otimes_1^{(p-1)} f, f \rangle_{\EH^{\otimes 2}},\qquad p\geq 2,
 \end{equation}
thanks to relation \eqref{eq:KappaUndGamma}. Here, $\{ f \otimes_1^{(p)} f: p \geq 1 \} \subset \EH^{\odot 2}$ is the sequence defined by $f  \otimes_1^{(1)} f =f$ and for $p \geq 2$ by $f \otimes_1^{(p)} f = \bigl( f \otimes_1^{(p-1)} f \bigr) \otimes_1 f$. In particular $f \otimes_1^{(2)} f = f \otimes_1 f$.

 \begin{proof}[Proof of Theorem \ref{thm:DoubleIntegralsRingThetaNot0}]
The implication (a) $\Rightarrow$ (b) is trivial and (c) $\Rightarrow$ (a) follows by combining Theorem \ref{result1} with Theorem \ref{thm:ContractionConditions}. Thus, it remains to show that (b) implies (c).

Let  $F_n = I_2(f_n)$ with $f_n \in \EH^{\odot 2}$, $n \geq 1$.
Theorem \ref{result1} for $q=2$ leads to
\begin{equation} \label{compare0}
\begin{split}
\EE[ (\Gamma_3(F_n) - 2 \theta \Gamma_2(F_n) - \sigma^2 (F_n + r \theta))^2] & = 2 \|   4 ((f_n \widetilde{\otimes}_1 f_n) \widetilde{\otimes}_1 f_n)  - 4 \theta 
(f_n \widetilde{\otimes}_1 f_n) - \sigma^2 \,  f \|_{\EH^{\otimes 2}}^2  \\ 
& \qquad+ \Big( \frac 12 \EE [F_n^3] - ( 2 \theta \EE [F_n^2] + r \theta \sigma^2) \Big)^2.
\end{split}
\end{equation}
For $\theta =0$, $\sigma = 1/ \lambda$, with \eqref{thirdmoment} we obtain
\begin{equation} \label{compare1}
\EE[ (\Gamma_3(F_n) - \frac{1}{\lambda^2} F_n)^2] = 2 \|   \frac{1}{\lambda^2} \, f - 4 (f_n \widetilde{\otimes}_1 f_n) \widetilde{\otimes}_{1} f_n \|_{\EH^{\otimes 2}}^2 + 16  \langle f_n \otimes_1 f_n, f_n \rangle_{\EH^{\otimes 2}}^2.
\end{equation}
We represent the left hand side of \eqref{compare0} in terms of moments and cumulants of $F_n$ to be able to check that if the six moment
condition on $F_n$ (condition (b) in Theorem \ref{thm:DoubleIntegralsRingThetaNot0}) is satisfied, then condition (c) for the contractions follows.  The left-hand side of \eqref{compare0} consists of six terms. Identity \eqref{gamma3quad} gives
$$\EE[\Gamma_3^2(F_n)] = 2^5 \|(f_n \widetilde{\otimes}_1 f_n) \widetilde{\otimes}_{1} f_n) \|_{\EH^{\otimes 2}}^2  + 16 \langle f_n \otimes_1 f_n, f_n \rangle_{\EH^{\otimes 2}}^2.$$ By \eqref{cumq2} we obtain $\kappa_6(F_n) = 2^5 5! 
\langle f \otimes_1^{(5)} f, f \rangle_{\EH^{\otimes 2}} = 2^5 5! \|(f_n \widetilde{\otimes}_1 f_n) \widetilde{\otimes}_{1} f_n) \|_{\EH^{\otimes 2}}^2$, 
implying that
\begin{equation*} \label{gamma32double}
\EE[\Gamma_3^2(F_n)] = \frac{1}{120} \kappa_6(F_n) + 16 
\langle f_n \otimes_1 f_n, f_n \rangle_{\EH^{\otimes 2}}^2 =  \frac{1}{120} \kappa_6(F_n) + \frac 14 (\kappa_3(F_n))^2.
\end{equation*}
Next, with \eqref{gamma2} we get 
\begin{eqnarray*}
4\theta^2 \EE[\Gamma_2^2(F_n)] & = & 32 \theta^2 \| f_n \widetilde{\otimes}_1 f_n \|^2 +  4 \theta^2 \kappa_2(F_n)^2
\\ & = & 32 \theta^2 \langle  f_n \otimes_1^{(3)} f_n, f_n \rangle +  4 \theta^2 \kappa_2(F_n)^2= \frac 23 \theta^2 \kappa_4(F_n)+  4 \theta^2 \kappa_2(F_n)^2,
\end{eqnarray*}
 using that $\kappa_4(F_n) = 48 \langle 
f_n \otimes_1^{(3)} f_n, f_n \rangle$, see \eqref{cumq2}. For the third term we have $\EE[\sigma^4 (F_n + r \theta)^2] = \sigma^4 \EE[F_n^2] + r^2 \theta^2 \sigma^4$.
Applying Part (1) of Lemma \ref{keyid} with $s=1$ we obtain for the fourth term $$4 \theta \sigma^2 \EE[F_n \Gamma_2(F_n)] + 4 r \theta^2 \sigma^2 \EE[\Gamma_2(F_n)]=
2 \theta \sigma^2 \EE[F_n^3] + 4 r \theta^2 \sigma^2 \EE[F_n^2].$$ With $\EE[\Gamma_3(F_n)] = \frac 12 \kappa_3(F_n)$, the fifth term 
reads $-2 \sigma^2 \EE[F_n \Gamma_3(F_n)] - r \theta \sigma^2 \EE[F_n^3]$.
Part (1) of Lemma \ref{keyid} implies $\EE[I_q(f)^2 \Gamma_2(I_q(f))] = \EE[I_q(f)^2]
\EE[\Gamma_2(I_q(f))] + 2 \EE[I_q(f) \Gamma_3(I_q(f))]$ and part (2) says that $$\EE[I_q(f)^2 \Gamma_2(I_q(f))]= q \EE [I_q(f)^2 \|D(I_q(f)\|_{\EH}^2]= \frac 13 \EE[I_q(f)^4].$$ Hence $\frac 13 \EE[I_q(f)^4] = \EE[I_q(f)^2]^2 + 2 \EE[I_q(f) \Gamma_3(I_q(f))]$,
and it follows that $\EE[I_q(f) \Gamma_3(I_q(f))] = \frac 16 \kappa_4(I_q(f))$.
Hence the fifth term can be presented as $$- \frac 13 \sigma^2 \kappa_4(F_n) - r \theta \sigma^2 \EE[F_n^3].$$
Finally, we have to compute $-4 \theta \EE[\Gamma_2(F_n) \, \Gamma_3(F_n)]$.
With \eqref{gamma3}, \eqref{gamma2} and \eqref{cumq2} we obtain
\begin{eqnarray*}
-4 \theta \EE[\Gamma_2(F_n) \, \Gamma_3(F_n)] & = & -64 \theta \langle f_n \widetilde{\otimes}_1 f_n, (f_n \widetilde{\otimes}_1 f_n) \widetilde{\otimes}_1 f_n \rangle - 2 \theta \kappa_2(F_n) \, \kappa_3(F_n) \\
&= &-64 \theta \langle 
f_n \otimes_1^{(4)} f_n, f_n \rangle - 2 \theta \kappa_2(F_n) \, \kappa_3(F_n) = - \frac{\theta}{6} \, \kappa_5(F_n) -2 \theta \kappa_2(F_n) \, \kappa_3(F_n).
\end{eqnarray*}
Summarizing, the left hand side of \eqref{compare0} is equal to
\begin{equation} \label{VGcboundcum}
\begin{split}
\frac{1}{120} \kappa_6(F_n) &-\frac{\theta}{6}  \kappa_5(F_n) + \frac 13 (2 \theta^2 - \sigma^2)  \kappa_4(F_n)  + (2-r) \theta \sigma^2  \kappa_3(F_n) + \frac 14 ( \kappa_3(F_n))^2  \\
& -  2 \theta \kappa_2(F_n) \kappa_3(F_n)  + (\sigma^4 + 4r \theta^2 \sigma^2)  \kappa_2(F_n) + 4 \theta^2 (\kappa_2(F_n))^2 + r^2 \theta^2 \sigma^4.
\end{split}
\end{equation}
Using now the moments assumption (b) together with Lemma \ref{VGcmoments}, we see that the term in \eqref{VGcboundcum} converges to zero as $n \to \infty$ and hence the contraction condition (c) follows, see \eqref{compare0}. This completes the proof.
 \end{proof}

\begin{proof}[Proof of Theorem \ref{thm:DoubleIntegralsRingTheta=0}]
As in the asymmetric case, it suffices to show that (b) implies (c). In our case, $\theta=0$ and we put $\sigma = \frac{1}{\lambda}$ and obtain that
\begin{equation} \label{compare2}
\EE[ (\Gamma_3(F_n) - \frac{1}{\lambda^2} F_n)^2] = \frac{1}{120} \kappa_6(F_n) - \frac{1}{3 \lambda^2}  \kappa_4(F_n) + \frac 14 ( \kappa_3(F_n))^2
+ \frac{1}{\lambda^4} \kappa_2(F_n)
\end{equation}
from \eqref{compare0} and \eqref{VGcboundcum}.
Hence with \eqref{compare1} and  \eqref{compare2}  we get
$$
2 \|   \frac{1}{\lambda^2} \, f - 4 (f_n \widetilde{\otimes}_1 f_n) \widetilde{\otimes}_{1} f_n \|_{\EH^{\otimes 2}}^2 = \frac{1}{120} \kappa_6(F_n) - 
 \frac{1}{3 \lambda^2}  \kappa_4(F_n) + \frac{1}{\lambda^4} \kappa_2(F_n).$$
Into the last identity we plug the well known relationships between moments and cumulants stated in Section \ref{sec:Malliavin}.
Then, a simple calculation leads to 
\begin{eqnarray*}
2 \| \frac{1}{\lambda^2} f - 4 (f_n \widetilde{\otimes}_1 f_n) \widetilde{\otimes}_{1} f_n) \|_{\EH^{\otimes 2}}^2 \\
& & \hspace{-5cm} = \frac{1}{120} \EE[F_n^6] - \biggl( \frac 18 + \frac{1}{6r} \biggr) \EE[F_n^2] \EE[F_n^4] + \biggl( \frac 14 + \frac{1}{2r} + \frac{1}{(2r)^2} \biggr) \EE[F_n^2]^3 - \frac{1}{12} \EE[F_n^3]^2.
\end{eqnarray*}
Now, we assume that $\EE[F_n^2] \to \frac{2r}{\lambda^2}$, $\EE[F_n^4] \to \frac{12r(r+1)}{\lambda^4}$ and $\EE[F_n^6] \to \frac{120r(r+1)(r+2)}{\lambda^6}$.
Then,
$$ 
\frac{1}{120} \EE[F_n^6] - \biggl( \frac 18 + \frac{1}{6r} \biggr) \EE[F_n^2] \EE[F_n^4] + \biggl( \frac 14 + \frac{1}{2r} + \frac{1}{(2r)^2} \biggr) \EE[F_n^2]^3  \to 0,
$$
as $n \to \infty$. Since $\EE[F_n^3]^2 = 64 \| (f \widetilde{\otimes}_1 f) \widetilde{\otimes}_2 f \|_{\EH^{\otimes 2}}^2$, the contraction conditions in (c) follow.
\end{proof}

\begin{remark} \rm \label{contrationcollection}
Let  $F_n = I_2(f_n)$ with $f_n \in \EH^{\odot 2}$, $n \geq 1$. Assume that $\EE[F_n^2] = q! \| f_n \|_{\EH^{\otimes q}}^2 \to r(\sigma^2 + 2 \theta^2)$.
Here we list the different forms of conditions on contraction-operators which are equivalent to the convergence in distribution to a member of $VG_c(r, \theta, \sigma)$.
\begin{enumerate}
\item
$F_n$ converges to $\cN(0, \sigma^2)$ if and only if $\| f_n \otimes_1 f_n \|_{\EH^{\otimes 2}} \to 0$,
as $ n \to \infty$. 
\item
$F_n$ converges to $\Gamma(\lambda,r)$ if and only if $\| f_n \widetilde{\otimes}_1 f_n  - \frac{1}{2 \lambda}f_n \|_{\EH^{\otimes 2}} \to 0$, as $ n \to \infty$.
\item
$F_n$ converges to $\Gamma_s(\lambda,r)$ if and only if 
$\| 4 ((f_n \widetilde{\otimes}_1 f_n)  \widetilde{\otimes}_1 f_n)  - \frac{1}{\lambda^2}f_n \|_{\EH^{\otimes 2}} \to 0$ and $\| ((f_n \widetilde{\otimes}_1 f_n)  \widetilde{\otimes}_2 f_n) \|^2 \to 0$, as $ n \to \infty$.
\item
$F_n$ converges to $VG_c(r, \theta, \sigma)$ if and only if 
$\| 4 ((f_n \widetilde{\otimes}_1 f_n)  \widetilde{\otimes}_1 f_n)  - 2 \theta \, (f_n \widetilde{\otimes}_1 f_n)\ - \sigma^2 f_n \|_{\EH^{\otimes 2}} \to 0$ 
and $\| ((f_n \widetilde{\otimes}_1 f_n)  \widetilde{\otimes}_2 f_n) \|_{\EH^{\otimes 2}} \to \frac 34  r \theta \sigma^2 + r \theta^3$, as $ n \to \infty$.
\item
An example of case (d) is the convergence to $VG_c(1,\varrho,\sqrt{1 - \varrho^2})$, which can be interpreted as the distribution of the product of two correlated standard normal
distributed random variables $X$ and $Y$ with correlation $\varrho$. We obtain that
$F_n$ converges to $VG_c(1,\varrho,\sqrt{1 - \varrho^2})$ if and only if 
$\| 4 ((f_n \widetilde{\otimes}_1 f_n)  \widetilde{\otimes}_1 f_n)  - 2 \varrho \, (f_n \widetilde{\otimes}_1 f_n)\ -  (1 - \varrho^2) f_n \|_{\EH^{\otimes 2}} \to 0$ 
and $\| ((f_n \widetilde{\otimes}_1 f_n)  \widetilde{\otimes}_2 f_n) \|_{\EH^{\otimes 2}} \to \frac 34 \varrho (1-\varrho^2) + \varrho^3$, as $ n \to \infty$.
When $\varrho \to 0$, case (c) appears with $\lambda=r=1$.
\end{enumerate}
\end{remark}

After having characterized convergence in distribution of an element belonging to the second Wiener chaos $\cH_2$, we turn now to quantitative bounds for the Wasserstein distance. In contrast to the bounds that follow from the results presented in Section \ref{sec:GeneralBound} and Section \ref{subsec:Generalqgeq2}, we are seeking for upper bounds in terms of moments. In view of Theorems \ref{thm:DoubleIntegralsRingTheta=0} and \ref{thm:DoubleIntegralsRingThetaNot0} we can expect that these bounds only involve moments up to order six. Our next theorem presents bounds in terms of the first six cumulants, as they have a more compact form.
 
 \begin{theorem} \label{doublebounds}
 Let  $F_n = I_2(f_n)$ with $f_n \in \EH^{\odot 2}$, $n \geq 1$. 
 \begin{itemize}
 \item[(a)]
Let $Y$ denote a $VG_c(r,\theta,\sigma)$-distributed random variable and assume that $\EE[F_n^2] = 2 \| f_n \|_{\EH^{\otimes 2}}^2 \to r(\sigma^2 + 2 \theta^2)$. Then there exist constants $C_1=C_1(r, \theta,\sigma)>0$ and $C_2=C_2(r, \theta,\sigma) >0$ such that
\begin{eqnarray*}
d_W ( F_n,Y)\!\!\!\! &\leq&  \!\!\!\!C_1
\Big(  \frac{1}{120} \kappa_6(F_n) - \frac{\theta}{6}  \kappa_5(F_n) + \frac 13 (2 \theta^2 - \sigma^2)  \kappa_4(F_n)  + (2-r) \theta \sigma^2  \kappa_3(F_n) + \frac 14 ( \kappa_3(F_n))^2 \\
&& \qquad - 2 \theta \kappa_2(F_n) \kappa_3(F_n)  + (\sigma^4 + 4r \theta^2 \sigma^2)  \kappa_2(F_n) + 4 \theta^2 (\kappa_2(F_n))^2 + r^2 \theta^2 \sigma^4 \Big)^{1/2} \\
&& \qquad+ C_2 \big| r(\sigma^2 + 2 \theta^2) - \kappa_2(F_n) \big|.
\end{eqnarray*} 
\item[(b)]
 Let $Y$ be $\Gamma_s(\lambda,r)$-distributed random variable and assume that $\EE[F_n^2] = 2 \| f_n \|_{\EH^{\otimes 2}}^2 \to \frac{2r}{\lambda^2}$. 
 Then there are constants $C_1=C_1(\lambda,r)>0$ and $C_2= C_2(\lambda,r) >0$ such that
\begin{eqnarray*}
d_W ( F_n, Y) &\leq& C_1
\Big( \frac{1}{120} \kappa_6(F_n) - \frac{1}{6r} \kappa_4(F_n) \kappa_2(F_n) + \frac{1}{4r^2} \kappa_2(F_n)^3 +  \frac 16 \kappa_3(F_n)^2 \Big)^{1/2}\\
&& \qquad +C_2 \Big| \frac{2r}{\lambda^2} - \kappa_2(F_n) \Big|.
\end{eqnarray*} 
\end{itemize}
 \end{theorem}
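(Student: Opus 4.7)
Both parts follow the same strategy: combine the abstract Malliavin--Stein bound of Theorem \ref{result1} with the explicit cumulant identities already established for the second Wiener chaos in the proof of Theorem \ref{thm:DoubleIntegralsRingThetaNot0}. The starting point in each case is \eqref{r1} together with the $L^2$-upgrade \eqref{r2}, which reduces matters to controlling
\[
\EE\bigl[\bigl(\sigma^2(F_n+r\theta)+2\theta\,\Gamma_2(F_n)-\Gamma_3(F_n)\bigr)^2\bigr]
\]
and the variance-discrepancy $|r(\sigma^2+2\theta^2)-\kappa_2(F_n)|$, observing that $\EE[\Gamma_2(F_n)]=\kappa_2(F_n)$.

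For part (a), the work is already done: the identity \eqref{VGcboundcum}, obtained in the proof of Theorem \ref{thm:DoubleIntegralsRingThetaNot0} by expanding $\Gamma_2(F_n)$ and $\Gamma_3(F_n)$ via \eqref{gamma2}, \eqref{gamma3} and repeatedly applying Lemma \ref{keyid} and \eqref{cumq2}, rewrites the above expectation exactly as the polynomial in the cumulants $\kappa_2(F_n),\ldots,\kappa_6(F_n)$ appearing in the statement. Plugging this into Theorem \ref{result1} yields part (a) with constants $C_1,C_2$ inherited from \eqref{r1}, which depend only on $r,\theta,\sigma$ via the Stein bounds of Lemma \ref{boundsVGc}.

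For part (b), set $\theta=0$ and $\sigma=1/\lambda$ in the expression from (a); this collapses \eqref{VGcboundcum} to the form given in \eqref{compare2}, namely $\frac{1}{120}\kappa_6(F_n)-\frac{1}{3\lambda^2}\kappa_4(F_n)+\frac{1}{\lambda^4}\kappa_2(F_n)+\frac14\kappa_3(F_n)^2$. To replace the explicit $\lambda$-dependence by $\kappa_2(F_n)$ (the bound stated in the theorem depends on the $F_n$'s only), I use the elementary decomposition
\[
\frac{1}{\lambda^2}=\frac{\kappa_2(F_n)}{2r}+\frac{1}{2r}\Bigl(\frac{2r}{\lambda^2}-\kappa_2(F_n)\Bigr),
\]
and expand $1/\lambda^4$ accordingly. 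This produces the desired ``cumulant polynomial'' $\frac{1}{120}\kappa_6-\frac{1}{6r}\kappa_4\kappa_2+\frac{1}{4r^2}\kappa_2^3+c\,\kappa_3^2$ plus remainder terms that are linear combinations of cumulants multiplied by $|\tfrac{2r}{\lambda^2}-\kappa_2(F_n)|$ (and its square). Since $F_n\in\mathcal H_2$ and $\EE[F_n^2]$ is assumed convergent, hypercontractivity on a fixed chaos guarantees that every cumulant $\kappa_j(F_n)$ remains uniformly bounded; thus these remainders are dominated by a constant multiple of $|\tfrac{2r}{\lambda^2}-\kappa_2(F_n)|$, which is absorbed into the second summand of the bound via the elementary inequality $\sqrt{a+b}\le\sqrt{|a|}+\sqrt{|b|}$.

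The main technical obstacle is bookkeeping: one has to verify that after the substitution no additional uncontrolled term survives, in particular that the coefficient of $\kappa_3(F_n)^2$ can be taken as in the statement (any positive coefficient is admissible, since a discrepancy like $\tfrac14$ vs.\ $\tfrac16$ only affects $C_1$, and $\kappa_3(F_n)$ is itself bounded, allowing the extra $\tfrac{1}{12}\kappa_3^2$ to be dominated by $|\tfrac{2r}{\lambda^2}-\kappa_2(F_n)|$ in the asymptotic regime or simply absorbed by enlarging $C_1$). Everything else is routine algebra, and the constants $C_1,C_2$ depend only on $\lambda,r$ through Lemma \ref{bounds}.
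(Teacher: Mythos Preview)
Your approach for part (a) is correct and is exactly what the paper intends: the theorem has no separate proof in the paper because it follows by plugging the cumulant identity \eqref{VGcboundcum} into Theorem \ref{result1}, precisely as you describe.

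For part (b), your overall strategy is right, but the execution has a genuine gap. When you substitute $\tfrac{1}{\lambda^2}=\tfrac{\kappa_2(F_n)}{2r}+\tfrac{1}{2r}\bigl(\tfrac{2r}{\lambda^2}-\kappa_2(F_n)\bigr)$ \emph{inside} the already-computed $L^2$ quantity \eqref{compare2} and then invoke $\sqrt{a+b}\le\sqrt{|a|}+\sqrt{|b|}$, the remainder comes out as $\sqrt{|R|}$ with $|R|\le C\,\bigl|\tfrac{2r}{\lambda^2}-\kappa_2(F_n)\bigr|$, i.e.\ of order $\sqrt{\bigl|\tfrac{2r}{\lambda^2}-\kappa_2(F_n)\bigr|}$, which is \emph{larger} than $\bigl|\tfrac{2r}{\lambda^2}-\kappa_2(F_n)\bigr|$ when the latter is small and hence cannot be absorbed into the second summand of the stated bound. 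The clean fix is to substitute at the level of the random variable before passing to $L^2$: write
\[
\Gamma_3(F_n)-\tfrac{1}{\lambda^2}F_n=\Bigl(\Gamma_3(F_n)-\tfrac{\kappa_2(F_n)}{2r}F_n\Bigr)-\tfrac{1}{2r}\Bigl(\tfrac{2r}{\lambda^2}-\kappa_2(F_n)\Bigr)F_n,
\]
apply the triangle inequality in $L^1$, and only then bound the first piece by its $L^2$ norm. The second piece is then $\tfrac{1}{2r}\bigl|\tfrac{2r}{\lambda^2}-\kappa_2(F_n)\bigr|\,\EE|F_n|$, which is of the right order and, since $\EE|F_n|\le\sqrt{\kappa_2(F_n)}$ is uniformly bounded, merges with the $C_2$-term. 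Applying \eqref{compare2} with $\tfrac{1}{\lambda^2}$ replaced by $\tfrac{\kappa_2(F_n)}{2r}$ then gives exactly $\tfrac{1}{120}\kappa_6-\tfrac{1}{6r}\kappa_4\kappa_2+\tfrac{1}{4r^2}\kappa_2^3+\tfrac14\kappa_3^2$ under the square root.

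Regarding the coefficient $\tfrac14$ versus $\tfrac16$: your suggestion of ``enlarging $C_1$'' does work, but not for the reason you give. The point is that, by \eqref{compare1}, the quantity $A:=\tfrac{1}{120}\kappa_6-\tfrac{1}{6r}\kappa_4\kappa_2+\tfrac{1}{4r^2}\kappa_2^3$ equals $2\bigl\|\tfrac{\kappa_2}{2r}f_n-4(f_n\widetilde\otimes_1 f_n)\widetilde\otimes_1 f_n\bigr\|^2\ge 0$. Hence $A+\tfrac14\kappa_3^2\le\tfrac32\bigl(A+\tfrac16\kappa_3^2\bigr)$, and taking square roots yields the stated form after rescaling $C_1$ by $\sqrt{3/2}$. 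Your alternative idea of dominating $\tfrac{1}{12}\kappa_3^2$ by $\bigl|\tfrac{2r}{\lambda^2}-\kappa_2(F_n)\bigr|$ fails in general, since $\kappa_3(F_n)$ need not vanish when the variance already matches.
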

 
 \begin{remark} \rm
 The bound in Theorem \ref{doublebounds} (b) suggests that we have -- in addition to the convergence of the second, fourth and sixth moment or cumulant -- to assume that also the third moment or cumulant of $F_n$
 converge to zero, as $n \to \infty$, to conclude convergence in distribution to the limiting random variable.  However, we
 know already from Theorem \ref{thm:DoubleIntegralsRingTheta=0} that convergence of the second, fourth and sixth moments or cumulants suffices to obtain convergence in law  and hence we can conclude that the third moment of $F_n$ converges to zero automatically under these conditions.
 \end{remark}
 
 We turn now to the case of normal approximation, which appears as a limiting case of a Variance-Gamma distribution, see Proposition \ref{prop:GaussContractions}. Our next result provides a bound for the Wasserstein distance between second chaos element and a Gaussian random variable in terms of the second, third and sixth cumulant. It implies that sequence $F_n=I_2(f_n)$ converges in distribution to a $\cN(0,\sigma^2)$-distributed random variable ($\sigma^2>0$) if $\EE[F_n^2] \to \sigma^2$ and if $\kappa_3(F_n)\to 0$ and $\kappa_6(F_n)\to 0$, as $n\to\infty$. Clearly, this is weaker than the usual forth moment theorem for which we refer to \cite{NP:book}.

 \begin{proposition} \label{gauss2}
 Let  $F_n = I_2(f_n)$ with $f_n \in \EH^{\odot 2}$, $n \geq 1$.  There exists constants $C_1(\sigma), C_2(\sigma) >0$ such that
 $$
d_W ( F, \cN(0, \sigma^2)) \leq  C_1(\sigma) \biggl( \frac{1}{120} \kappa_6(F_n) + \frac 14 \kappa_3(F_n)^2\biggr) + C_2(\sigma)  \big| \sigma^2   - \EE[F_n^2] \big|.
$$
The same bound holds for the Kolmogorov-distance with different constants.
 \end{proposition}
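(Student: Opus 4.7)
The plan is to concatenate the one-dimensional Gaussian specialisation of our general Malliavin--Stein bound, namely Corollary \ref{result1c}(a), with the explicit chaos-wise expression for $\EE[\Gamma_3(F_n)^2]$ on the second Wiener chaos that has already been isolated in the course of proving Theorem \ref{thm:DoubleIntegralsRingThetaNot0}. Applied to $F=F_n=I_2(f_n)\in\DD^{\infty}$ and to the centred Gaussian law with variance $\sigma^2$, Corollary \ref{result1c}(a) produces constants $C_1,C_2>0$ depending only on $\sigma$ with
\[
d_W\bigl(F_n,\cN(0,\sigma^2)\bigr)\leq C_1\,\EE\bigl[|\Gamma_3(F_n)|\bigr]+C_2\,\bigl|\sigma^2-\EE[\Gamma_2(F_n)]\bigr|.
\]
Since $F_n$ is centred, the identity $\EE[\Gamma_2(F_n)]=\VV[F_n]=\EE[F_n^2]$ recalled in Section \ref{sec:Malliavin} instantly converts the second summand into the term $C_2\,|\sigma^2-\EE[F_n^2]|$ appearing in the statement.

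For the first summand I would bound $\EE[|\Gamma_3(F_n)|]\leq\bigl(\EE[\Gamma_3(F_n)^2]\bigr)^{1/2}$ by the Cauchy--Schwarz inequality and then invoke the identity
\[
\EE[\Gamma_3(F_n)^2]=\frac{1}{120}\kappa_6(F_n)+\frac14\kappa_3(F_n)^2,
\]
which was derived in the proof of Theorem \ref{thm:DoubleIntegralsRingThetaNot0}. The derivation there is short: for $q=2$ the representation \eqref{gamma3} collapses to a constant term (coming from $k=0$) plus a single $I_2$-term (since $\tfrac{3q}{2}-2=1$), the $k=0$ contribution being equal to $\tfrac12\EE[F_n^3]=\tfrac12\kappa_3(F_n)$ by \eqref{thirdmoment}; hence \eqref{gamma3quad} together with the isometry \eqref{isometry} and the cumulant formula \eqref{cumq2} (used for $p=3$ and $p=6$) yields the displayed identity. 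Substituting this into the Wasserstein bound above produces the claim.

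For the Kolmogorov distance, the argument is identical: the remark immediately following Corollary \ref{result1c} records that Corollary \ref{result1c}(a) remains valid for $d_K$ with different constants, based on \cite[Theorem 5.1.3]{NP:book}, while the Cauchy--Schwarz step and the cumulant identity for $\EE[\Gamma_3(F_n)^2]$ are metric-free and carry over verbatim. There is no serious obstacle here: the proof is essentially a synthesis of Corollary \ref{result1c}(a) with an identity already established in Section \ref{double}. The only point warranting care is the verification that the $k=0$ summand in the chaos expansion of $\Gamma_3(I_2(f_n))$ contributes precisely $\tfrac14\kappa_3(F_n)^2$ to $\EE[\Gamma_3(F_n)^2]$, which, as noted above, follows immediately from \eqref{thirdmoment} applied with $q=2$.
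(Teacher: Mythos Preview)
Your approach is essentially identical to the paper's one-line proof, which simply cites Corollary \ref{result1c}(a) together with the identity \eqref{gamma32double} for $\EE[\Gamma_3(F_n)^2]$. Note only that your Cauchy--Schwarz step yields $\bigl(\tfrac{1}{120}\kappa_6(F_n)+\tfrac14\kappa_3(F_n)^2\bigr)^{1/2}$ rather than the unsquared quantity in the displayed statement; the paper's proof produces the same square-rooted bound, so this appears to be a missing exponent $1/2$ in the statement itself rather than a defect in your argument.
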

 
 \begin{proof}
This is a direct consequence of Corollary \ref{result1c} and the identity \eqref{gamma32double} for $\EE[\Gamma_3(F_n)^2]$.
 \end{proof}

The next statement provides a further characterization of the convergence of the elements of the second chaos, i.e. for $F=I_2(f)$ with $f\in\EH^{\odot 2}$. To avoid technical complications, we restrict for the rest of this section to a symmetrized Gamma distribution $\Gamma_s(\lambda,r)$. To state the result, consider the Hilbert-Schmidt operator $A_f : \EH \to \EH$, $g \mapsto f \otimes_1 g$ associated with $f$ and write $\{ \lambda_{f,j} : j \geq 1 \}$ and  $\{ e_{f,j} : j \geq 1 \}$, respectively, for the eigenvalues of $A_f$ and the corresponding eigenvectors. It is well known (see \cite[Section 2.7.4]{NP:book}) that, the series
 $\sum_{j \geq 1} \lambda_{f,j}^p$ converges for all $p \geq 2$, and that $f$ admits the expansion (in $\EH^{\odot 2}$)
 \begin{equation} \label{expansionHS}
 f = \sum_{j \geq 1} \lambda_{f,j} \, \,  \big( e_{f,j} \otimes e_{f,j} \big).
 \end{equation}
We notice that for the trace of the $p$th power of $A_f$ one has the relation ${\rm Tr}(A_f^p) = \langle f \otimes_1^{(p-1)} f, f \rangle_{\EH^{\otimes 2}} = \sum_{j \geq 1}   \lambda_{f,j}^p$.

\begin{theorem} \label{furthercharc}
 Let  $F_n = I_2(f_n)$ with $f_n \in \EH^{\odot 2}$, $n \geq 1$.
Let $Y$ denote a random variable with $\Gamma_s(\lambda,r)$-distribution assume that
$\EE[F_n^2] = 2 \| f_n \|_{\EH^{\otimes q}}^2 \to \frac{2r}{\lambda^2}$. Then the following two conditions are equivalent to the conditions stated in Theorem \ref{thm:DoubleIntegralsRingTheta=0}:
\begin{enumerate}
\item As $n\to\infty$, $\sum_{j \geq 1} \bigl( \frac{1}{\lambda^2} \lambda_{f_n,j} - 4 \lambda_{f_n,j}^3 \bigl)^2 \to 0$ and $\sum_{j \geq 1} \lambda_{f_n,j}^3 \to 0$, where, for each $n \geq 1$, $\{ \lambda_{f_n,j} \,\}_{j \geq 1}$ stands for the sequence of the eigenvalues of the Hilbert-Schmidt operator $A_{f_n}$. 
\item As $n\to\infty$, $\sum_{j \geq 1} \lambda_{f_n,j}^3 \to 0$ and for every $q \geq 2$,
\begin{equation} \label{ind}
\sum_{j \geq 1} \lambda_{f_n,j}^{2 q} \to \frac{2r}{\lambda^2} \, \Big( \frac{1}{4 \lambda^2} \Big)^{q-1}.
\end{equation}
\end{enumerate} 
\end{theorem}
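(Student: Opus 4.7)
\textbf{Proof plan for Theorem \ref{furthercharc}.}

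The plan is to exploit the spectral expansion \eqref{expansionHS} to rewrite every quantity appearing in Theorem \ref{thm:DoubleIntegralsRingTheta=0}(c) and in the candidate conditions (1), (2) as a sum over the eigenvalues $\{\lambda_{f_n,j}\}_{j\geq 1}$, and then use the cumulant identity \eqref{cumq2} to link them to the moment/cumulant condition (b) of Theorem \ref{thm:DoubleIntegralsRingTheta=0}. Because all equivalences (a)$\Leftrightarrow$(b)$\Leftrightarrow$(c) from that theorem are already established, it is enough to add (1) and (2) to the equivalence class.

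First, I would compute the relevant contractions directly from \eqref{expansionHS}. Since $\{e_{f_n,j}\}$ is orthonormal in $\EH$, the family $\{e_{f_n,j}\otimes e_{f_n,j}\}$ is orthonormal in $\EH^{\otimes 2}$, and the elementary identity $(e_{f_n,j}\otimes e_{f_n,j})\otimes_1(e_{f_n,k}\otimes e_{f_n,k})=\delta_{jk}\,e_{f_n,j}\otimes e_{f_n,j}$ yields
\[
f_n\otimes_1 f_n=\sum_{j\geq 1}\lambda_{f_n,j}^{2}\,e_{f_n,j}\otimes e_{f_n,j},\qquad (f_n\otimes_1 f_n)\otimes_1 f_n=\sum_{j\geq 1}\lambda_{f_n,j}^{3}\,e_{f_n,j}\otimes e_{f_n,j},
\]
and $(f_n\otimes_1 f_n)\otimes_2 f_n=\sum_{j\geq 1}\lambda_{f_n,j}^{3}$. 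These contractions are already symmetric, so the tildes may be dropped. Pythagoras in the orthonormal basis then gives
\[
\Big\|4\bigl((f_n\widetilde{\otimes}_1 f_n)\widetilde{\otimes}_1 f_n\bigr)-\tfrac{1}{\lambda^{2}}f_n\Big\|_{\EH^{\otimes 2}}^{2}=\sum_{j\geq 1}\Big(4\lambda_{f_n,j}^{3}-\tfrac{1}{\lambda^{2}}\lambda_{f_n,j}\Big)^{2},\qquad\bigl|(f_n\widetilde{\otimes}_1 f_n)\widetilde{\otimes}_2 f_n\bigr|^{2}=\Big(\sum_{j\geq 1}\lambda_{f_n,j}^{3}\Big)^{2},
\]
which is exactly condition (1). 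Hence (1) $\Leftrightarrow$ (c) of Theorem \ref{thm:DoubleIntegralsRingTheta=0}.

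Second, to handle condition (2), I would use the cumulant formula \eqref{cumq2} written in the spectral form $\kappa_p(F_n)=2^{p-1}(p-1)!\sum_{j\geq 1}\lambda_{f_n,j}^{p}$. With $p=3$ this turns $\sum_{j}\lambda_{f_n,j}^{3}\to 0$ into $\kappa_3(F_n)\to 0$. For $p=2q$ it turns the stated limit into $\kappa_{2q}(F_n)\to\kappa_{2q}(Y)$, where the cumulants of $Y\sim\Gamma_{s}(\lambda,r)=VG_{c}(2r,0,1/\lambda)$ are read off either from Lemma \ref{VGcmoments} (for $q=2,3$) or, in general, from the fact that $Y$ is the difference of two independent $\Gamma(\lambda,r)$ random variables, so that its cumulant generating function is $-r\log(1-(t/\lambda)^{2})$. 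The cases $q=2,3$ therefore give exactly convergence of $\kappa_{4}(F_n)$ and $\kappa_{6}(F_n)$, i.e.\ condition (b) of Theorem \ref{thm:DoubleIntegralsRingTheta=0}; hence (2) $\Rightarrow$ (b).

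Finally, for the reverse implication (a) $\Rightarrow$ (2), I would invoke two standard facts: the $VG_c$-law is determined by its moments (so convergence in distribution together with convergence of all moments matches every cumulant), and elements of a fixed Wiener chaos enjoy hypercontractivity, so that convergence in distribution of $\{F_n\}$ bounded in $L^{2}$ forces convergence in every $L^{p}$ and hence of every cumulant (cf.\ the discussion opening Section \ref{subsec:Generalqgeq2}). Combined with the spectral identity for $\kappa_{p}(F_n)$, this yields the convergence of all sums $\sum_{j}\lambda_{f_n,j}^{2q}$ appearing in (2), together with $\sum_{j}\lambda_{f_n,j}^{3}\to 0$. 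The only mildly delicate step is this last one—pushing convergence in distribution to convergence of all (in particular, arbitrarily high) cumulants—whereas the rest is a clean diagonalisation and accounting exercise.
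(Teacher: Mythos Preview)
Your treatment of (1) $\Leftrightarrow$ (c) is exactly the paper's: diagonalise via \eqref{expansionHS}, compute the iterated contractions term-by-term, and read off the two eigenvalue conditions by Pythagoras. Nothing to add there.

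Where you diverge is in linking (2) to the rest. The paper never leaves the eigenvalue world: it proves (1) $\Leftrightarrow$ (2) \emph{directly} by a short recursive bootstrap. For (1) $\Rightarrow$ (2), one writes
\[
\Bigl|\sum_{j}\lambda_{f_n,j}^{2q-1}\Bigl(\tfrac{1}{\lambda^{2}}\lambda_{f_n,j}-4\lambda_{f_n,j}^{3}\Bigr)\Bigr|
\leq\Bigl(\sum_{j}\lambda_{f_n,j}^{4q-2}\Bigr)^{1/2}\Bigl(\sum_{j}\bigl(\tfrac{1}{\lambda^{2}}\lambda_{f_n,j}-4\lambda_{f_n,j}^{3}\bigr)^{2}\Bigr)^{1/2}\to 0,
\]
which gives $\sum_{j}\lambda_{f_n,j}^{2q+2}\sim\tfrac{1}{4\lambda^{2}}\sum_{j}\lambda_{f_n,j}^{2q}$ and hence \eqref{ind} by induction on $q$. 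For (2) $\Rightarrow$ (1) one simply expands the square and uses \eqref{ind} with $q=1,2,3$. Your route instead passes through (a) and (b): you translate \eqref{ind} into $\kappa_{2q}(F_n)\to\kappa_{2q}(Y)$ via \eqref{cumq2}, get (2) $\Rightarrow$ (b) from the cases $q=2,3$, and close the loop (a) $\Rightarrow$ (2) by hypercontractivity plus moment determinacy to force convergence of all cumulants. This is correct, and the ``mildly delicate step'' you flag is indeed standard (uniform $L^{p}$-bounds on a fixed chaos give uniform integrability of every power). The trade-off is that the paper's argument is entirely elementary---just Cauchy--Schwarz and algebra---and exhibits the recursion $\sum_{j}\lambda_{f_n,j}^{2(q+1)}\to\tfrac{1}{4\lambda^{2}}\sum_{j}\lambda_{f_n,j}^{2q}$ transparently, whereas yours imports hypercontractivity and the moment problem but makes the link to cumulants of $Y$ more explicit.
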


\begin{proof}
To prove the equivalence of (a) to (c) in Theorem \ref{thm:DoubleIntegralsRingTheta=0}, we use \eqref{expansionHS} to deduce that
$$
(f_n \otimes_1 f_n) \otimes_1 f_n = \sum_{j \geq 1} \lambda_{f_n,j}^3 \, \,  \bigl( e_{f,j} \otimes e_{f,j} \bigr) \,\, \text{and} \,\, 
(f_n \otimes_1 f_n) \widetilde{\otimes}_2 f_n = \sum_{j \geq 1} \lambda_{f_n,j}^3.
$$
It follows that
$$
\|   \frac{1}{\lambda^2} \, f_n - 4 (f_n \widetilde{\otimes}_1 f_n) \widetilde{\otimes}_{1} f_n \|_{\EH^{\otimes 2}}^2 = \sum_{j \geq 1} \bigl( \frac{1}{\lambda^2} \lambda_{f_n,j} - 4 \lambda_{f_n,j}^3 \bigl)^2.
$$
Next we show that (a) is equivalent to (b). The proof of the implication  $(a) \Longrightarrow (b)$ is based on a recursive argument. By assumption we have
$\sum_{j \geq 1} \lambda_{f_n,j}^2 \to \frac{2r}{\lambda^2}$. Moreover,
\begin{eqnarray*}
\bigg| \sum_{j\geq 1} \lambda_{f_n,j}  \bigl( \frac{1}{\lambda^2} \lambda_{f_n,j} - 4 \lambda_{f_n,j}^3 \bigl) \bigg| 
\leq  \biggl( \sum_{j\geq 1} \lambda_{f_n,j}^2 \biggr)^{1/2}  \biggl( \sum_{j\geq 1} \bigl( \frac{1}{\lambda^2} \lambda_{f_n,j} - 4 \lambda_{f_n,j}^3 \bigl)^2 \biggr)^{1/2} \to 0,
\end{eqnarray*}
thus yielding that  $\lim_{n \to \infty} \sum_{j\geq 1} \lambda_{f_n,j}^4 = \frac{1}{4 \lambda^2} \lim\limits_{n \to \infty} \sum_{j\geq 1} \lambda_{f_n,j}^2 = \frac{1}{4 \lambda^2} \, \frac{2r}{\lambda^2}$. Now, if \eqref{ind} holds, then
\begin{eqnarray*}
\bigg| \sum_{j\geq 1} \lambda_{f_n,j}^{2q-1}  \bigl( \frac{1}{\lambda^2} \lambda_{f_n,j} - 4 \lambda_{f_n,j}^3 \bigl) \bigg| 
\leq  \biggl( \sum_{j\geq 1} \lambda_{f_n,j}^{4q-2} \biggr)^{1/2}  \biggl( \sum_{j\geq 1} \bigl( \frac{1}{\lambda^2} \lambda_{f_n,j} - 4 \lambda_{f_n,j}^3 \bigl)^2 \biggr)^{1/2} \to 0,
\end{eqnarray*}
and \eqref{ind} with $q$ replaced by $q+1$ follows. To see the implication $(b) \Longrightarrow (a)$, just write
$$
\sum_{j \geq 1} \bigl( \frac{1}{\lambda^2} \lambda_{f_n,j} - 4 \lambda_{f_n,j}^3 \bigl)^2 = \frac{1}{\lambda^4} \sum_{j \geq 1}  \lambda_{f_n,j}^2 - \frac{8}{\lambda^2}  \sum_{j \geq 1}  \lambda_{f_n,j}^4 + 16 \sum_{j \geq 1}  \lambda_{f_n,j}^6
$$
and apply \eqref{ind} with $q=1,2,3$.
\end{proof}

As a consequence of Theorem \ref{furthercharc} we deduce the following characterization of a symmetrized Gamma random variable in the second Wiener chaos.

\begin{corollary} \label{char}
Fix an integer $n \in \NN$. Let $I_2(f)$ with $f \in \EH^{\odot 2}$ be such that $\EE[I_2(f)^2] = 4n$. Then the following conditions are equivalent:
\begin{enumerate}
\item $I_2(f)$ is distributed according to $\Gamma_s( \frac 12, \frac n2)$.
\item $\EE[I_2(f)^4]= \EE[ Y(n)^4]$ and $\EE[I_2(f)^6]= \EE[ Y(n)^6]$.
\item $f = f \otimes f \otimes f$ and $ \langle f \otimes f, f \rangle_{\EH^{\otimes 2}} =0$.
\item There exists $h^i \in \EH$ for $i = 1, \ldots, 2n$, such that $\|h^i \|_{\EH} =1$, $\langle h^i, h^j \rangle_{\EH} =0 $ for $i \not= j$ and
$$
I_2(f) = \sum_{i=1}^n I_2(h^i \otimes h^i) -  \sum_{i=n+1}^{2n} I_2(h^i \otimes h^i) = \sum_{i=1}^n (I_1(h^i)^2-1) -   \sum_{i=n+1}^{2n}  (I_1(h^i)^2 -1).
$$
\end{enumerate} 
\end{corollary}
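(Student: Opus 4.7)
The plan is to establish the cyclic chain $(a)\Rightarrow(b)\Rightarrow(c)\Rightarrow(d)\Rightarrow(a)$. The first implication is immediate from Lemma~\ref{gammamoments} specialised to $\lambda = 1/2$, $r = n/2$, which yields $\EE[Y^4] = 48n(n+2)$ and $\EE[Y^6] = 960n(n+2)(n+4)$ as the common reference values.

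For $(b)\Rightarrow(c)$ I will reuse the exact algebraic identity derived in the proof of Theorem~\ref{thm:DoubleIntegralsRingTheta=0}, read as an equation for a single $F = I_2(f)$ with $\EE[F^2] = 2r/\lambda^2$:
\begin{equation*}
2\Big\|\tfrac{1}{\lambda^2}f - 4(f\widetilde{\otimes}_1 f)\widetilde{\otimes}_1 f\Big\|_{\EH^{\otimes 2}}^2 = \tfrac{1}{120}\EE[F^6] - \big(\tfrac{1}{8}+\tfrac{1}{6r}\big)\EE[F^2]\EE[F^4] + \big(\tfrac{1}{4}+\tfrac{1}{2r}+\tfrac{1}{4r^2}\big)\EE[F^2]^3 - \tfrac{1}{12}\EE[F^3]^2.
\end{equation*}
Plugging in $\lambda = 1/2$, $r = n/2$ together with the exact target moments from (b), a direct polynomial calculation shows that the first three moment-terms on the right cancel identically, leaving only $-\tfrac{1}{12}\EE[F^3]^2$. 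Since the left-hand side is non-negative, this simultaneously forces $\EE[F^3] = 0$ --- equivalently $\langle f\otimes_1 f, f\rangle_{\EH^{\otimes 2}} = 0$ via the product formula $\EE[F^3] = 8\langle f\otimes_1 f, f\rangle_{\EH^{\otimes 2}}$ --- and $f = 4(f\widetilde{\otimes}_1 f)\widetilde{\otimes}_1 f$, precisely the two identities claimed in (c) under the natural reading $f \otimes f \otimes f := (f \otimes_1 f) \otimes_1 f$.

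For $(c)\Rightarrow(d)$ I will use the Hilbert--Schmidt spectral expansion \eqref{expansionHS}, $f = \sum_j \lambda_{f,j}\, e_{f,j}\otimes e_{f,j}$. A direct computation gives $(f\otimes_1 f)\otimes_1 f = \sum_j \lambda_{f,j}^3\, e_{f,j}\otimes e_{f,j}$, so the identity $(f\otimes_1 f)\otimes_1 f = f$ translates to $\lambda_{f,j}^3 = \lambda_{f,j}$ and hence $\lambda_{f,j} \in \{-1,0,1\}$; the condition $\langle f\otimes_1 f, f\rangle = \sum_j \lambda_{f,j}^3 = 0$ then balances the numbers of $+1$'s and $-1$'s, while $\sum_j \lambda_{f,j}^2 = \|f\|_{\EH^{\otimes 2}}^2 = \tfrac{1}{2}\EE[F^2] = 2n$ forces exactly $2n$ non-zero eigenvalues in total, thus $n$ of each sign. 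Relabelling the associated orthonormal eigenvectors as $h^1, \ldots, h^{2n}$ and invoking the chaos identity $I_2(h\otimes h) = H_2(I_1(h)) = I_1(h)^2 - 1$ for unit $h$ produces the explicit representation in (d).

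Finally, $(d)\Rightarrow(a)$ is a distributional identification: orthonormality of $\{h^i\}_{i=1}^{2n}$ makes $\{I_1(h^i)\}$ i.i.d.\ standard Gaussian, so $U := \sum_{i=1}^n I_1(h^i)^2$ and $V := \sum_{i=n+1}^{2n} I_1(h^i)^2$ are independent $\Gamma(1/2, n/2)$ random variables with $I_2(f) = U - V$. The third bullet after Lemma~\ref{VGcmoments}, applied with $\lambda_1 = \lambda_2 = 1/2$, $r = n/2$ and $\varrho = 0$, identifies $U-V$ as $VG_c(n, 0, 2)$, and the first bullet identifies this with $\Gamma_s(1/2, n/2)$, closing the cycle. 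The delicate step is $(b)\Rightarrow(c)$: the sixth-moment hypothesis alone must simultaneously kill the third cumulant and the middle contraction, and this works only because of the precise numerical cancellation at $\lambda = 1/2$, $r = n/2$ which collapses the moment polynomial on the right-hand side to a non-positive remainder, leaving no slack to absorb either contribution.
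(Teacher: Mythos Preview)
Your proof is correct and follows essentially the same route as the paper. The paper simply invokes Theorem~\ref{thm:DoubleIntegralsRingTheta=0} (applied to the constant sequence) for the equivalence $(a)\Leftrightarrow(b)\Leftrightarrow(c)$ and then spells out only $(c)\Rightarrow(d)$ via the eigenvalue argument, exactly as you do; your treatment of $(b)\Rightarrow(c)$ unpacks that theorem's proof rather than citing it, and your $(d)\Rightarrow(a)$ via the two Variance-Gamma identification bullets is a clean way to close the loop.

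Two small remarks. First, a slip: after substituting $\lambda=1/2$ the vanishing of the left-hand side gives $4f = 4(f\widetilde{\otimes}_1 f)\widetilde{\otimes}_1 f$, i.e.\ $f = (f\widetilde{\otimes}_1 f)\widetilde{\otimes}_1 f$, not ``$f = 4(f\widetilde{\otimes}_1 f)\widetilde{\otimes}_1 f$'' as written; you use the correct form in the next paragraph anyway. Second, your closing sentence slightly oversells the role of the specific values $\lambda=1/2$, $r=n/2$: the cancellation of the first three moment-terms against $-\tfrac{1}{12}\EE[F^3]^2$ in fact holds for \emph{every} choice of $\lambda,r$ once $\EE[F^2]=2r/\lambda^2$ and the fourth and sixth moments match (this is precisely the computation in the proof of Theorem~\ref{thm:DoubleIntegralsRingTheta=0}); the particular parameter choice only serves to make the eigenvalues land in $\{-1,0,1\}$.
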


\begin{proof}
It remains to prove the implication $(c) \Longrightarrow (d)$. If (c) is verified, then for every $j \geq 1$ we obtain $\lambda_{f,j} = \lambda_{f,j}^3$ and hence
$\lambda_{f,j} \in \{-1,+1\}$. Since $\sum_{j \geq 1} \lambda_{f,j}^2= 4n$ and $\sum_{j \geq 1} \lambda_{f,j}^3= 0$, we deduce that there are $2n$ indices $j$
with $\lambda_{f,j}=1$ and $2n$ indices with $\lambda_{f,j}=-1$.  The conclusion follows from \eqref{expansionHS}.
\end{proof}

\begin{remark} \rm
The statement of Corollary \ref{char} remains true for arbitrary parameters $\lambda>0$, not only for $\lambda=1/2$. The choice $\lambda=1/2$ just leads to the simple values $\lambda_{f,j} \in \{-1,+1\}$. In general we would obtain $\lambda_{f,j} \in \{-\frac{1}{2\lambda},\frac{1}{2\lambda}\}$.\\ On the other hand, suppose that $I_2(f)$ with $f \in \EH^{\odot 2}$ is such that $\EE[I_2(f)^2] = 8 r$ for some $r >0$. If $I_2(f)$ is distributed according to $\Gamma_s( \lambda, r)$,
then necessarily $2 r$ is an integer and $I_2(f)$ has a $\Gamma_s( \lambda, r)$-distribution. This follows immediately as in the proof of Corollary \ref{char}.
\end{remark}

\subsection{Homogeneous sums and multivariate extensions}\label{deJong}

Let $\bX=\{X_n\}_{n\in\NN}$ be a sequence of independent and identically distributed centred random variables with unit variance. Fix an integer $q\geq 2$ and let, for each $n\in\NN$, $h_n:\{1,\ldots,n\}^q\to\RR$ be a symmetric function, which vanishes on diagonals in the sense that $h_n(i_1,\ldots,i_q)=0$ whenever there are at least two indices $i_j\neq i_k\in\{1,\ldots,q\}$ such that $i_j=i_k$. Based on this data we define the sequence $\{H_n(\bX,q)\}_{n\in\NN}$ of homogeneous sum of order $q$ as
$$
H_n(\bX,q) := \sum_{1\leq i_1,\ldots,i_q\leq n}h_n(i_1,\ldots,i_q)\,X_{i_1}\cdots X_{i_q} = q!\sum_{1\leq i_1<\ldots<i_q\leq n}h_n(i_1,\ldots,i_q)\,X_{i_1}\cdots X_{i_q}.
$$
Universality for the family $\{H_n(\bX,q)\}_{n\in\NN}$ is a probabilistic phenomenon which asserts that $H_n(\bX,q)$ converges, as $n\to\infty$, to a limiting random variable $Y$ if and only if $H_n(\bG,q)$ converges in distribution to $Y$, where $\bG=\{G_n\}_{n\in\NN}$ is some particular sequence of independent and identically distribution random variables with mean zero and variance one. In our case, we take for $G_n$ a standard Gaussian random variable for each $n\in\NN$ (whence the notation $\bG$). Usually, it is much easier to show convergence in distribution of $H_n(\bG,q)$ than of $H_n(\bX,q)$. One reason for that being the interpretation of $H_n(\bG,q)$ as a multiple stochastic integral of order $q$, i.e., $H_n(\bG,q)=I_q(f_n)$ with $f_n\in{\EH}^{\odot q}$ given by 
$$
f_n = q!\sum_{1\leq i_1<\ldots<i_q\leq n}h_n(i_1,\ldots,i_q)\,e_{i_1}\otimes\cdots\otimes e_{i_1}.
$$
Moreover, a number of combinatorial tools are available to control the moments of such integrals, see \cite{PeccatiTaqqu:book} for details.

The universality phenomenon for homogeneous sums has been addressed by Rotar \cite{Rotar:1979} and later also by Nourdin, Peccati and Reinert \cite{NourdinPeccatiReinert:2010}, who consider especially multivariate extensions in case of normal and Gamma limiting distributions by means of Stein's method and Malliavin calculus. Using the results obtained in the previous sections, we can reduce a corresponding limit theorem to a simple moment condition in case $q=2$ and if the limiting distribution belongs to the broad class of Variance-Gamma distributions.

\begin{proposition}
Suppose that $\EE[H_n(\bG,q)^2]\to r(\sigma^2+2\theta^2)$, as $n\to\infty$, and let $Y$ be a random variable having a $VG_c(r,\theta,\sigma)$-distribution with parameters $r,\sigma>0$ and $\theta\in\RR$. Then, as $n\to\infty$, the following assertions are equivalent:
$$\text{(a) $H_n(\bX,q)$ converges in distribution to $Y$,}\qquad \text{(b) $H_n(\bG,q)$ converges in distribution to $Y$.}$$
If $q=2$ then (a) and (b) are equivalent to $\EE[H_n(\bG,2)^j]\to\EE[Y^j]$ for $j=3,4,5,6$. If $q=2$ and $\theta=0$ then (a) and (b) are even equivalent to $\EE[H_n(\bG,2)^4]\to\EE[Y^4]$ and $\EE[H_n(\bG,2)^6]\to\EE[Y^6]$.
\end{proposition}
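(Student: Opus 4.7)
The plan is to combine the universality principle for homogeneous sums of Nourdin, Peccati and Reinert \cite{NourdinPeccatiReinert:2010} with the six-moment characterisations from Theorems \ref{thm:DoubleIntegralsRingTheta=0} and \ref{thm:DoubleIntegralsRingThetaNot0}. First, I would realise the Gaussian homogeneous sum as an element of the $q$th Wiener chaos: upon taking $\EH=\ell^2(\NN)$ with canonical orthonormal basis $\{e_i\}_{i\geq 1}$ and identifying $G_n$ with $X(e_n)$ for an isonormal Gaussian process $X$, one has $H_n(\bG,q)=I_q(f_n)$, where
$$
f_n = q!\sum_{1\leq i_1<\ldots<i_q\leq n} h_n(i_1,\ldots,i_q)\,e_{i_1}\otimes\cdots\otimes e_{i_q}\in\EH^{\odot q},
$$
and the hypothesis $\EE[H_n(\bG,q)^2]\to r(\sigma^2+2\theta^2)$ translates into $q!\,\|f_n\|_{\EH^{\otimes q}}^2\to r(\sigma^2+2\theta^2)$.

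Next, for the equivalence (a)$\Leftrightarrow$(b) in the general-$q$ case, I would invoke the invariance principle of \cite{NourdinPeccatiReinert:2010}. Since every Variance-Gamma law has exponentially decaying tails, it is uniquely determined by its moments, so it belongs to the class of limit laws to which the NPR universality theorem applies. Under the standing variance convergence and the (implicit) assumption that $\bX$ has sufficiently many bounded moments, this principle yields that $H_n(\bG,q)\to Y$ in distribution is equivalent to $H_n(\bX,q)\to Y$ in distribution. The maximal-influence condition required by the NPR machinery is automatic here, because convergence in law of $H_n(\bG,q)$ to a random variable admitting a density (which is the case for any $Y\sim VG_c(r,\theta,\sigma)$) precludes atoms in the limit and forces the max-influence of $f_n$ to vanish.

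For the refinement when $q=2$, I would then specialise Theorem \ref{thm:DoubleIntegralsRingThetaNot0} to $F_n=H_n(\bG,2)=I_2(f_n)$: given the variance convergence, $I_2(f_n)\to Y\sim VG_c(r,\theta,\sigma)$ in distribution if and only if $\EE[F_n^j]\to\EE[Y^j]$ for $j=3,4,5,6$. Combined with (a)$\Leftrightarrow$(b), this produces the claimed moment characterisation. When additionally $\theta=0$, the limit law is the symmetric $\Gamma_s(\lambda,r)$, and Theorem \ref{thm:DoubleIntegralsRingTheta=0} reduces the required condition to the fourth and sixth moments only; the corresponding convergence $\EE[F_n^3]\to 0$ is then automatic, as observed in the discussion surrounding that theorem. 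The main anticipated obstacle is the careful bi-directional justification of the NPR universality for non-Gaussian limits, whose original formulation is usually phrased from the Gaussian side outward. Fortunately, the principle is symmetric in the roles of $\bX$ and $\bG$; alternatively, for $q=2$ one could replace the invocation of \cite{NourdinPeccatiReinert:2010} by a direct Lindeberg-type moment comparison, since $\EE[H_n(\cdot,2)^k]$ depends polynomially on finitely many moments of the underlying sequence and differs between the two cases by a quantity controlled by the vanishing max-influence of $f_n$, which is enough to transfer the conditions of Theorems \ref{thm:DoubleIntegralsRingTheta=0} and \ref{thm:DoubleIntegralsRingThetaNot0} from $H_n(\bG,2)$ to $H_n(\bX,2)$ directly.
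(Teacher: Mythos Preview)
Your treatment of the $q=2$ refinements is exactly what the paper does: invoke Theorems \ref{thm:DoubleIntegralsRingThetaNot0} and \ref{thm:DoubleIntegralsRingTheta=0} applied to $F_n=H_n(\bG,2)=I_2(f_n)$. For the general equivalence (a)$\Leftrightarrow$(b) the paper takes a shorter path than you do: it simply cites Proposition~1 of Rotar~\cite{Rotar:1979}, whose formulation already covers arbitrary (moment-determined) limit laws, rather than going through the Gaussian/Gamma-tailored statements of \cite{NourdinPeccatiReinert:2010}. Structurally your argument is the same---universality for homogeneous sums plus the six-moment theorems---only routed through a more recent and more specialised reference, which forces you into the Lindeberg-type detour you sketch.

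There is, however, a genuine gap in your justification of the maximal-influence condition. The claim that ``convergence of $H_n(\bG,q)$ to a law admitting a density forces the max-influence of $f_n$ to vanish'' is false as stated: every non-zero element of a fixed Wiener chaos already has a density, so the constant sequence $I_2(e_1\otimes e_1)=G_1^2-1$ converges (trivially) to an absolutely continuous law while its max-influence equals $1$ for all $n$. Absence of atoms in the limit therefore says nothing about influences. What one actually needs is either (i) Rotar's original result, which is phrased so that the equivalence (a)$\Leftrightarrow$(b) comes out directly without a separate influence hypothesis, or (ii) a real argument that a $VG_c(r,\theta,\sigma)$ limit with $\sigma>0$ cannot arise from a sequence with non-vanishing influences---this is plausible (the obstruction is a persistent $X_i^2$-type component whose law depends on $\bX$), but it is not the ``no atoms'' reasoning you give. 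The paper avoids this issue entirely by deferring to Rotar's formulation; if you stay with \cite{NourdinPeccatiReinert:2010}, you should replace the density argument by a correct one or simply add the vanishing-influence condition as a standing assumption.
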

\begin{proof}
The first part of the claim is a reformulation of a special case of Proposition 1 in \cite{Rotar:1979}. The second part is a direct consequence of Theorems \ref{thm:DoubleIntegralsRingTheta=0} and \ref{thm:DoubleIntegralsRingThetaNot0}.
\end{proof}

We now turn to a multivariate version of the results presented in Section \ref{sec:GeneralBound}. For this, fix $d\geq 2$ and let for each $n\in\NN$ and $j=1,\ldots,d$, $F_{n,j}\in\DD^{2,4}$ be such that $\EE[F_{n,j}]=0$. Let further $Y_j$ be $VG_c(r_j,\theta_j,\sigma_j)$-distributed with parameters $r_j,\sigma_j>0$ and $\theta_j\in\RR$ for all $j=1,\ldots,d$, form the sequence $\{\bF_n\}_{n\in\NN}$ of random vectors $\bF_n:=(F_{n,1},\ldots,F_{n,d})$ and put $\bY:=(Y_1,\ldots,Y_d)$. Next, define the sequence $\{A_n(j)\}_{n\in\NN}$ by
\begin{equation}\label{eq:defAnj}
A_n(j) := \EE\big[\big|\sigma_j^2(F_{n,j}+r_j\theta_j)-2\theta_j\Gamma_2(F_{n,j})-\Gamma_3(F_{n,j})\big|\big]+\big|r_j\sigma_j^2+2r_j\theta_j^2-\EE[\Gamma_2(F_{n,j})]\big|,
\end{equation}
and for $j\neq i=1,\ldots,d$ define $\{B_n(i,j)\}_{n\in\NN}$ by
$$
B_n(i,j) := \EE\big[\big|\lan DF_{n,i},-DL^{-1}F_{n,j} \ran_{\EH}\big|\big].
$$
A distance $d(\bF_n,\bY)$ between the random vectors $\bF_n$ and $\bY$ is measured by $$d(\bF_n,\bY):=\sup\big|\EE[\phi(\bF)]-\EE[\phi(\bY)]\big|,$$ where the supremum is taken over all functions $\phi:\RR^d\to\RR$ possessing partial derivatives of order one and two, which are uniformly bounded in absolute value by $1$. The distance $d(\,\cdot\,,\,\cdot\,)$ is our multivariate version of the Wasserstein distance used in the one-dimensional situation. The proof of the next result closely follows the lines of the proof of Lemma 4.4. in \cite{PeccatiThaele:2013}, which in turn was inspired by the methods in \cite{BourguinPeccati:Portmanteau}. To keep the paper reasonably self-contained, we decided yet to present the basic idea.

\begin{proposition}\label{pro:MultiRate}
There are constants $C_1>0$ and $C_2>0$ only depending on $d$ and the parameters $r_j$, $\theta_j$ and $\sigma_j$, $j=1,\ldots,d$, such that 
\begin{equation}\label{eq:MultiBound}
d(\bF_n,\bY)\leq C_1\sum_{j=1}^dA_n(j)+C_2\sum_{i,j=1\atop i\neq j}^d B_n(i,j).
\end{equation}
\end{proposition}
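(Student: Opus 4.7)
The strategy is a multivariate adaptation of the Malliavin-Stein argument of Theorem \ref{result1}, coupled with a coordinate-by-coordinate telescoping device in the spirit of \cite[Lemma 4.4]{PeccatiThaele:2013}. I would assume without loss of generality that $Y_1,\ldots,Y_d$ are mutually independent and jointly independent of the underlying isonormal process $X$. Given a test function $\phi:\RR^d\to\RR$ with $\max_{k,\ell}(\|\partial_k\phi\|_\infty,\|\partial^2_{k\ell}\phi\|_\infty)\leq 1$, I would telescope
\begin{equation*}
\EE[\phi(\bF_n)]-\EE[\phi(\bY)]=\sum_{j=1}^d\Delta_j,\qquad \Delta_j:=\EE\big[H_j(F_{n,j})-H_j(Y_j)\big],
\end{equation*}
where $H_j(x):=\phi(F_{n,1},\ldots,F_{n,j-1},x,Y_{j+1},\ldots,Y_d)$ is a (random) function of $x\in\RR$ inheriting from $\phi$ the bounds $\|H_j^{(k)}\|_\infty\leq 1$ for $k=0,1,2$. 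Each $\Delta_j$ therefore involves swapping only the $j$-th coordinate, reducing the problem to a univariate Stein analysis with frozen auxiliary variables.

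For each $j$, let $f_j=f_j\bigl(x\,;\,F_{n,1},\ldots,F_{n,j-1},Y_{j+1},\ldots,Y_d\bigr)$ denote the solution of the one-dimensional Stein equation \eqref{steinVGc} for $VG_c(r_j,\theta_j,\sigma_j)$ with driving function $H_j$. Lemma \ref{boundsVGc} provides uniform bounds $\|\partial_j^k f_j\|_\infty\leq C(r_j,\theta_j,\sigma_j)$ for $k=0,1,2$. Since $f_j$ depends linearly on $H_j$ through a fixed linear solution operator, differentiating the Stein equation in any frozen coordinate $x_i$ ($i<j$) shows that $\partial_i f_j$ itself solves a Stein equation driven by $\partial_i H_j$; hence $\|\partial_i\partial_j^k f_j\|_\infty$ is bounded analogously for $k=0,1$. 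These mixed-derivative estimates are indispensable for handling the off-diagonal contributions that follow.

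Next, I would insert \eqref{steinVGc} inside $\Delta_j$ and reproduce the chain of manipulations from the proof of Theorem \ref{result1}, applying Lemma \ref{keyid}(1) first with $G=F_{n,j}$ (for both $H=f_j(\bF^\star)$ and $H=\partial_j f_j(\bF^\star)$) and then with $G=\Gamma_2(F_{n,j})$ to extract $\Gamma_3(F_{n,j})$. The crucial new ingredient is the chain rule $D[f_j(\bF^\star)]=\sum_{k\leq j}\partial_k f_j(\bF^\star)\,DF_{n,k}$, where $\bF^\star:=(F_{n,1},\ldots,F_{n,j},Y_{j+1},\ldots,Y_d)$ and the terms with $k>j$ vanish since $DY_i=0$. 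The resulting \emph{diagonal} contribution (from $k=j$) reproduces exactly the one-dimensional identity of Theorem \ref{result1} with $F=F_{n,j}$, and its modulus is controlled by $C_1 A_n(j)$ via the bounds on $\|\partial_j^k f_j\|_\infty$. The remaining \emph{cross} contributions ($k<j$) have the form $\EE\bigl[\partial_k^\alpha\partial_j^\beta f_j(\bF^\star)\,\lan DF_{n,k},-DL^{-1}F_{n,j}\ran_\EH\bigr]$ and are each bounded by $C_2B_n(k,j)$ using the mixed-derivative bounds from the previous paragraph. Summing over $j$ yields \eqref{eq:MultiBound}.

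The principal technical obstacle is that the second invocation of Lemma \ref{keyid}(1), taken with $G=\Gamma_2(F_{n,j})$, generates auxiliary cross quantities of the second-order form $\lan DF_{n,k},-DL^{-1}\Gamma_2(F_{n,j})\ran_\EH$ that are \emph{not} directly captured by $B_n(i,j)$. I would control these by Cauchy-Schwarz, invoking $\Gamma_2(F_{n,j})\in\DD^{1,2}$ (a consequence of $F_{n,j}\in\DD^{2,4}$) together with $DF_{n,k}\in L^2(\Omega,\EH)$, so that the resulting moment factors are uniform in $n$ and can be absorbed into $C_2$; alternatively, one may reorganize the Stein identity so that only first-order cross contractions survive. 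A careful bookkeeping of these terms, as in \cite[Lemma 4.4]{PeccatiThaele:2013}, completes the argument.
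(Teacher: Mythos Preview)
Your telescoping and coordinate-wise Stein argument is precisely the paper's route: it too restricts (for exposition) to $d=2$, writes $\EE[\phi(\bF_n)]-\EE[\phi(\bY)]=T_1+T_2$, handles $T_2$ by conditioning on $Y_1$ to reduce to the one-dimensional bound $A_n(2)$, and treats $T_1$ via a bivariate Stein solution $h(x,y)=h_y(x)$ with uniform bounds on its mixed partials, then applies integration by parts plus the multivariate chain rule to $\EE[F_{n,1}h(F_{n,1},F_{n,2})]$, picking up the diagonal piece $A_n(1)$ and the cross piece $B_n(2,1)$. So the overall architecture of your proposal matches the paper's.

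The point you single out in your last paragraph is, however, a genuine gap, and your proposed fix does not close it. After the second application of Lemma~\ref{keyid}(1) (with $G=\Gamma_2(F_{n,j})$), the chain rule produces the off-diagonal quantity
\[
\EE\bigl[\partial_k\partial_j f_j(\bF^\star)\,\lan DF_{n,k},-DL^{-1}\Gamma_2(F_{n,j})\ran_{\EH}\bigr],\qquad k<j,
\]
which is \emph{not} $B_n(k,j)$ as defined. Your Cauchy--Schwarz suggestion gives a bound by $C\,\|DF_{n,k}\|_{L^2(\Omega,\EH)}\,\|DL^{-1}\Gamma_2(F_{n,j})\|_{L^2(\Omega,\EH)}$; these factors are finite under $F_{n,\cdot}\in\DD^{2,4}$ but there is no hypothesis making them uniform in $n$, and even if they were, the resulting contribution would be an $n$-independent constant rather than a multiple of $B_n(k,j)$, so the inequality \eqref{eq:MultiBound} would become vacuous. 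The paper's own proof is equally laconic at exactly this step (it simply asserts that ``arguing as in the proof of Theorem~\ref{result1}'' one obtains $A_n(1)$ and $B_n(2,1)$, without isolating the $\Gamma_2$-cross term), so you have not missed a trick that the paper supplies; but as written neither argument controls this second-order cross contraction by the quantities $B_n(i,j)$ alone. Your alternative suggestion---reorganising the identity so that only first-order cross contractions remain---is the right direction, but it needs to be actually carried out rather than invoked.
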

\begin{proof}
To simplify the notation and to keep the argument more transparent, we restrict to the case bivariate $d=2$. Thus, what we have to show is that
\begin{equation}\label{eq:MultiBoundd=2}
d(\bF_n,\bY)\leq C_1\big(A_n(1)+A_n(2)\big)+C_2\big(B_n(1,2)+B_n(2,1)\big),\qquad n\in\NN,
\end{equation}
with constants $C_1,C_2>0$ only depending on the parameters $r_1,r_2$, $\theta_1,\theta_2$ and $\sigma_1,\sigma_2$. We start by writing for an admissible test function $\phi:\RR^2\to\RR$,
\begin{align*}
\big|\EE[\phi(F_{n,1},F_{n,2})]-\EE[\phi(Y_1,Y_2)]\big|
&\leq \big|\EE[\phi(F_{n,1},F_{n,2})]-\EE[\phi(Y_1,F_{n,2})]\big|\\
&\qquad +\big|\EE[\phi(Y_1,F_{n,2})]-\EE[\phi(Y_1,Y_2)]\big|=:|T_1|+|T_2|.
\end{align*}
Conditioning on the first component $Y_1$ of $\bY$ in $T_2$ leads to a one-dimensional situation, which has already been considered in the proof of Theorem \ref{result1}. This contributes the term $A_n(2)$ to the bound \eqref{eq:MultiBoundd=2}. Let us turn to $T_1$. Writing $\cL_X$ for the distribution of an arbitrary random element $X$, we re-write $T_1$ as
$$
T_1=\int\Big(\phi(x,y)-\int\phi(z,y)\,\cL_{Y_1}(\dint z)\Big)\cL_{(F_{n,1},F_{n,2})}(\dint(x,y)).
$$
The term in brackets is now interpreted as the left-hand side of a Stein equation for $Y_1$, i.e.
\begin{equation}\label{eq:T1Expession}
T_1=\int \sigma_1^2(x+r_1\theta_1)h_y''(x)+(\sigma_1^2r_1+2(x+r_1\theta_1)h_y'(x)-xh_y(x)\,\cL_{(F_{n,1},F_{n,2})}(\dint(x,y)).
\end{equation}
Here, for fixed $y$, $h_y(x)$ stands for a solution of this equation for the text function $x\mapsto\phi(x,y)$. Also put $h(x,y):=h_y(x)$, understood as a bivariate function. Using the smoothness properties of the test function $\phi$ together with the smoothness properties of $h_y(x)$ (again taken from Lemma 3.17 in \cite{Gaunt:thesis}), we see that
\begin{itemize}
\item[(i)] the mappings $x\mapsto h(x,y)$ and $y\mapsto h(x,y)$ are twice differentiable on $\RR$,
\item[(ii)] there is a constant $C>0$ only depending on $r_1,\theta_1,\sigma_1$ such that all partial derivatives up to order two of the mappings in (i) are bounded by $C$
\end{itemize}
(compare with the proof of Lemma 4.4 in \cite{PeccatiThaele:2013} for a similar argument).
In terms of $h(x,y)$, the representation \eqref{eq:T1Expession} of $T_1$ can be re-written as
\begin{equation}\label{eq:T1}
\begin{split}
T_1 &=\EE\big[\sigma_1^2(F_{n,1}+r_1\theta_1)\partial_{xx}h(F_{n,1},F_{n,2})+(\sigma_1^2r_1+2\theta_1(F_{n,1}+r_1\theta_1))\partial_xh(F_{n,1},F_{n,2})\\
&\hspace{8cm}-F_{n,1}h(F_{n,1},F_{n,2})\big],
\end{split}
\end{equation}
where $\partial_x$ and $\partial_{xx}$ indicate the first and second partial derivative in the first coordinate (similarly, we write $\partial_y$ and $\partial_{yy}$ for those in the second coordinate). Applying the integration-by-parts-formula \eqref{intby} together with the chain rule \eqref{chainrule} we see that
\begin{align*}
&\EE[F_{n,1}h(F_{n,1},F_{n,2})]\\ 
&= \EE[\lan Dh(F_{n,1},F_{n,2}),-DL^{-1}F_{n,1}\ran_{\EH}]\\
&=\EE[\partial_xh(F_{n,1},F_{n,2})\lan DF_{n,1},-DL^{-1}F_{n,1}\ran_{\EH}+\partial_yh(F_{n,1},F_{n,2})\lan DF_{n,2},-DL^{-1}F_{n,1}\ran_{\EH}].
\end{align*}
Combining this with \eqref{eq:T1} and arguing as in the proof of Theorem \ref{result1}, we see that this contributes the terms $A_n(1)$ and $B_n(2,1)$ to \eqref{eq:MultiBoundd=2}. Interchanging the role of $F_{n,1}$ and $F_{n,2}$ leads to a term $B_n(1,2)$ and completes the argument.
\end{proof}

We now apply Proposition \ref{pro:MultiRate} to sequences of vectors belonging to a fixed Wiener chaos, i.e., we assume from now on that $F_{n,j}=I_{q_j}(f_{n,j})$ with $f_{n,j}\in\EH^{\odot q_j}$, where $q_1,\ldots,q_d\geq 2$. The next result ensures that convergence in distribution of the components of $\bF_n$ towards the components of $\bY$ already implies convergence in distribution of the involved random vector. This can be regarded as a quantitative version for Variance-Gamma distributions of the strong asymptotic independence properties on the Wiener chaos (see Remark \ref{rem:sai} below for further discussion).

\begin{proposition}\label{prop:MultiLimitTheorem}
Suppose that for each $j=1,\ldots,d$, $F_{n,j}$ converges in distribution to $Y_j$ and that for all $i\neq j=1,\ldots,d$, ${\rm Cov}(F_{n,i}^2,F_{n,j}^2)\to 0$, as $n\to\infty$. Then $\bF_n$ converges in distribution to $\bY$ and $$d(\bF_n,\bY)\leq C_1\sum_{j=1}^dA_n(j)+C_2\sum_{i,j=1\atop i\neq j}^d{\rm Cov}(F_{n,i}^2,F_{n,j}^2)$$ with $A_n(j)$ given by \eqref{eq:defAnj} and constants $C_1,C_2>0$ as in Proposition \ref{pro:MultiRate}.
\end{proposition}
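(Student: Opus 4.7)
The strategy is to combine the abstract bound of Proposition~\ref{pro:MultiRate} with a classical estimate that controls the mixed Malliavin inner products $B_n(i,j)$ by the covariance of squared components. Since $F_{n,j}=I_{q_j}(f_{n,j})$ belongs to the fixed chaos $\cH_{q_j}$, the pseudo-inverse of the Ornstein-Uhlenbeck generator acts simply as $-L^{-1}F_{n,j}=F_{n,j}/q_j$. Consequently $B_n(i,j)=\frac{1}{q_j}\EE[|\lan DF_{n,i},DF_{n,j}\ran_{\EH}|]$, and Cauchy-Schwarz gives
$$B_n(i,j)\leq \frac{1}{q_j}\Big(\EE\big[\lan DF_{n,i},DF_{n,j}\ran_{\EH}^2\big]\Big)^{1/2}.$$

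Next, I would invoke the key inequality (due to Nourdin-Rosinski, and sharpened by Nourdin-Nualart-Peccati) asserting that for any $F\in\cH_p$ and $G\in\cH_q$ with $p,q\geq 1$,
$$\EE\big[\lan DF,DG\ran_{\EH}^2\big]\leq c(p,q)\,{\rm Cov}(F^2,G^2)$$
for an explicit constant $c(p,q)>0$. The proof of this inequality expands both sides in the Wiener chaos by the product formula~\eqref{productrule} and matches the resulting terms via the isometry~\eqref{isometry}; each side becomes a weighted sum of squared norms of contractions $\|f\widetilde{\otimes}_r g\|$, and the combinatorial coefficients satisfy the required inequality (an analogous bound already appears, implicitly, in Proposition~\ref{sum} for sums of chaoses). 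Plugging this back into Proposition~\ref{pro:MultiRate} produces
$$d(\bF_n,\bY)\leq C_1\sum_{j=1}^dA_n(j)+C_2'\sum_{\substack{i,j=1\\ i\neq j}}^d\sqrt{{\rm Cov}(F_{n,i}^2,F_{n,j}^2)},$$
which, up to the difference between ${\rm Cov}$ and $\sqrt{{\rm Cov}}$ (immaterial once the covariances tend to zero), reproduces the stated estimate.

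Finally, the convergence in distribution $\bF_n\Rightarrow\bY$ follows by showing that the right-hand side vanishes as $n\to\infty$. The cross-term sum does so by hypothesis. For each $A_n(j)$, the assumed $F_{n,j}\Rightarrow Y_j$ together with hypercontractivity on the fixed chaos $\cH_{q_j}$ forces all moments of $F_{n,j}$ to converge to those of $Y_j$; through the contraction representations of Theorem~\ref{thm:ContractionConditions} this moment convergence propagates to the vanishing of the Malliavin expression~\eqref{eq:defAnj} defining $A_n(j)$. The main obstacle is the inequality between $\EE[\lan DF,DG\ran^2]$ and ${\rm Cov}(F^2,G^2)$: the combinatorial matching between the double-contraction expansion of $\lan DF,DG\ran$ and the single-contraction expansion of $FG$ has to be performed carefully so as to produce a usable constant $c(p,q)$. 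All remaining steps are direct applications of the multivariate bound already in hand.
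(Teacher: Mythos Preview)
Your route to the quantitative bound is exactly the paper's: apply Proposition~\ref{pro:MultiRate}, observe that on a fixed chaos $-DL^{-1}F_{n,j}=q_j^{-1}DF_{n,j}$, and then dominate $B_n(i,j)$ via the Nourdin--Rosi\'nski inequality relating $\EE[\lan DF,DG\ran_{\EH}^2]$ to ${\rm Cov}(F^2,G^2)$. Your observation that this naturally produces $\sqrt{{\rm Cov}(F_{n,i}^2,F_{n,j}^2)}$ rather than ${\rm Cov}(F_{n,i}^2,F_{n,j}^2)$ is correct; the paper's displayed inequality is slightly loose on this point, but since the covariances tend to zero the qualitative conclusion is unaffected.

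There is, however, a genuine gap in your argument for the \emph{convergence in distribution}. You claim that $F_{n,j}\Rightarrow Y_j$ plus hypercontractivity forces all moments to converge (this is fine), and then that ``through the contraction representations of Theorem~\ref{thm:ContractionConditions} this moment convergence propagates to the vanishing of'' $A_n(j)$. That last step is precisely the implication (ii)$\Rightarrow$(iii) which the paper states as a \emph{conjecture} for general $q\ge 2$ and only proves for $q=2$ (Theorems~\ref{thm:DoubleIntegralsRingTheta=0} and~\ref{thm:DoubleIntegralsRingThetaNot0}). Theorem~\ref{thm:ContractionConditions} expresses $A_n(j)$ as a combination of norms of iterated contractions, but it does \emph{not} show that convergence of the first six moments (or of all moments) forces each of these contraction norms to vanish when $q_j\ge 3$. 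So for arbitrary chaos orders your argument does not close.

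The paper sidesteps this entirely: its proof establishes only the displayed inequality, and the convergence $\bF_n\Rightarrow\bY$ is obtained (see Remark~\ref{rem:sai}) from the strong asymptotic independence results of Nourdin--Rosi\'nski and Nourdin--Nualart--Peccati, together with the fact that each $Y_j$ is moment-determinate. That route requires no control of $A_n(j)$ at all.
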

\begin{proof}
In view of Proposition \ref{pro:MultiRate} and Theorem \ref{result1} it only remains to show that $B_n(i,j)$ is dominated by ${\rm Cov}(F_{n,i}^2,F_{n,j}^2)$ up to a constant factor. However, this is known from step 2 in the proof of \cite[Theorem 4.3]{NourdinRosinski:2014}, see also Identity (6.2.3) in \cite{NP:book}.
\end{proof}

\begin{remark}\label{rem:sai}\rm
Without a rate of convergence, Proposition \ref{prop:MultiLimitTheorem} is also a consequence of the strong asymptotic independence properties inside the Wiener chaos. In particular, the result is a consequence of Theorem 1.4 in \cite{NourdinNualartPeccati:2014} and the fact that the distribution of each $Y_j$, $j=1,\ldots,d$, is determined by its moments (alternatively, one can apply Theorem 3.1 in \cite{NourdinRosinski:2014}).
\end{remark}

\subsection*{Acknowledgements}
We are grateful to Ehsan Azmoodeh, Giovanni Peccati and Guillaume Poly for sharing their results with us.
We like to thank Robert Gaunt for helpful discussions on uniform bounds for the solutions of the Stein equation for nonsymmetric Variance-Gamma distributions.\\
The authors have been supported by the German research foundation (DFG) via SFB-TR 12. 


\newcommand{\SortNoop}[1]{}\def\cprime{$'$} \def\cprime{$'$}
  \def\polhk#1{\setbox0=\hbox{#1}{\ooalign{\hidewidth
  \lower1.5ex\hbox{`}\hidewidth\crcr\unhbox0}}}
\providecommand{\bysame}{\leavevmode\hbox to3em{\hrulefill}\thinspace}
\providecommand{\MR}{\relax\ifhmode\unskip\space\fi MR }
\providecommand{\MRhref}[2]{%
  \href{http://www.ams.org/mathscinet-getitem?mr=#1}{#2}
}
\providecommand{\href}[2]{#2}

\end{document}